\newcommand{\mm}{\mathcal{M}}
\newcommand{\mc}{\mathcal{C}}
\newtheorem{thm}{Theorem}[section]
\newtheorem{lmm}[thm]{Lemma}
\newtheorem{prop}[thm]{Proposition}
\theoremstyle{definition}
\newtheorem{remark}[thm]{Remark}
\newtheorem{ex}[thm]{Example}
\newcommand{\ee}{\mathbb{E}}
\newcommand{\mf}{\mathcal{F}}
\newcommand{\pp}{\mathbb{P}}
\newcommand{\var}{\mathrm{Var}}
\newcommand{\ve}{\varepsilon}
\numberwithin{equation}{section}
\newcommand{\N}{\mathbb{N}}
\renewcommand{\bar}{\overline}
\newcommand{\ER}{Erd\H{o}s--R\'enyi }
 \newcommand{\Ln}{L^{(n)}}
 \newcommand{\Vn}{V^{(n)}}
\DeclareFontFamily{U}{txsyc}{}
\DeclareFontShape{U}{txsyc}{m}{n}{
   <-> txsyc%
}{}
\DeclareFontShape{U}{txsyc}{bx}{n}{
   <-> txbsyc%
}{}
\DeclareFontShape{U}{txsyc}{l}{n}{<->ssub * txsyc/m/n}{}
\DeclareFontShape{U}{txsyc}{b}{n}{<->ssub * txsyc/bx/n}{}
\DeclareSymbolFont{symbolsC}{U}{txsyc}{m}{n}
\DeclareMathSymbol{\df}{\mathrel}{symbolsC}{"42}
\DeclareMathSymbol{\fd}{\mathrel}{symbolsC}{"43}
\DeclareMathSymbol{\lJoin}{\mathrel}{symbolsC}{"58}
\DeclareMathSymbol{\rJoin}{\mathrel}{symbolsC}{"59}
\newcommand{\f}[2]{\frac{#1}{#2}}
\newcommand{\cA}{\mathcal{A}}
\newcommand{\cB}{\mathcal{B}}
\newcommand{\cE}{\mathcal{E}}
\newcommand{\cF}{\mathcal{F}}
\newcommand{\cG}{\mathcal{G}}
\newcommand{\cI}{\mathcal{I}}
\newcommand{\cN}{\mathcal{N}}
\newcommand{\cS}{\mathcal{S}}
\newcommand{\cT}{\mathcal{T}}
\newcommand{\NN}{\mathbb{N}}
\newcommand{\PP}{\mathbb{P}}
\newcommand{\RR}{\mathbb{R}}
\renewcommand{\SS}{\mathbb{S}}
\newcommand{\TT}{\mathbb{T}}
\newcommand{\ZZ}{\mathbb{Z}}
\newcommand{\iy}{\infty}
\newcommand{\lt}{\left}
\newcommand{\me}{\medskip}
\newcommand{\pa}{\partial}
\newcommand{\ri}{\rightarrow}
\newcommand{\rt}{\right}
\newcommand{\wi}{\widetilde}
\newcommand{\fo}{\forall\ }
\newcommand{\lVe}{\lt\Vert}
\newcommand{\rVe}{\rt\Vert}
\newcommand{\st}{\,:\,}
\newcommand{\un}{\mathds{1}}
\newcommand{\bq}{\begin{eqnarray*}}
\newcommand{\bqn}[1]{\begin{eqnarray}\label{#1}}
\newcommand{\eq}{\end{eqnarray*}}
\newcommand{\eqn}{\end{eqnarray}}
\newcommand{\lin}{\llbracket}
\newcommand{\rin}{\rrbracket}
\newcommand{\ttsim}{\raise.17ex\hbox{$\scriptstyle\mathtt{\sim}$}}
\newcommand{\kh}{\kern .08em}
\begin{document}
\title{A random walk on the Rado graph}
\author{Sourav Chatterjee}
\address{Departments of mathematics and statistics, Stanford University}
\email{souravc@stanford.edu}
\author{Persi Diaconis}
\address{Departments of mathematics and statistics, Stanford University}
\email{diaconis@math.stanford.edu}
\author{Laurent Miclo}
\address{Economics Department, CNRS, University of Toulouse}
\email{laurent.a.miclo@gmail.com}
\thanks{S.~C.'s research was partially supported by NSF grants DMS-1855484 and DMS-2113242}
\thanks{P.~D.'s research was partially suppported by NSF grant DMS-1954042}
\thanks{L.~M.~acknowledges funding from ANR grant ANR-17-EUR-0010.}

\dedicatory{Dedicated to our friend and coauthor Harold Widom.}
\keywords{}
\subjclass[2020]{}

\begin{abstract}
The Rado graph, also known as the random graph $G(\infty, p)$, is a classical limit object for finite graphs. We study natural ball walks as a way of understanding the geometry of this graph. For the walk started at $i$, we show that order $\log_2^*i$ steps are sufficient, and for infinitely many $i$, necessary for convergence to stationarity. The proof involves an application of Hardy's inequality for trees.
\end{abstract}

\maketitle

\section{Introduction}
The Rado graph $R$ is a natural limit of the set of all finite graphs (Fraiss\'e limit, see Section \ref{fraisse}). In Rado's construction, the vertex set is $\N = \{0,1,2,\ldots\}$. There is an undirected edge from $i$ to $j$ if $i<j$ and the $i^{\textup{th}}$ binary digit of $j$ is a one (where the $0^{\textup{th}}$ digit is the first digit from the right). Thus, $0$ is connected to all odd numbers, $1$ is connected to $0$ and all $j$ which are $2$ or $3$ (mod $4$) and so on. There are many alternative constructions. For $p\in (0,1)$, connecting $i$ and $j$ with probability $p$ gives the \ER graph $G(\infty, p)$, which is (almost surely) isomorphic to $R$. Further constructions are in Section \ref{fraisse}. 

Let $(Q(j))_{0\le j<\infty}$ be a positive probability on $\N$ (so, $Q(j)>0$ for all $j$, and $\sum_{j=0}^\infty Q(j)=1$). We study a `ball walk' on $R$ generated by $Q$: 
\begin{quote}
From $i\in \N$, pick $j\in N(i)$ with probability proportional to $Q(j)$, where $N(i) = \{j: j\sim i\}$ is the set of neighbors of $i$ in $R$.
\end{quote}
Thus, the probability of moving from $i$ to $j$ in one step is 
\begin{align}\label{kform}
K(i,j) = 
\begin{cases}
Q(j)/Q(N(i)) &\text{ if } i\sim j,\\
0 &\text{ otherwise.}
\end{cases}
\end{align}
As explained below, this walk is connected, aperiodic and reversible, with stationary distribution
\begin{align}\label{piform}
\pi(i) = \frac{Q(i)Q(N(i))}{Z},
\end{align}
where $Z$ is the normalizing constant. 

It is natural to study the mixing time --- the rate of convergence to stationarity. The following result shows that convergence is extremely rapid. Starting at $i\in \N$, order $\log_2^* i$ steps suffice, and for infinitely many $i$, are needed. 

\begin{thm}\label{mainthm}
Let $Q(j) = 2^{-(j+1)}$, $0\le j<\infty$. For $K(i,j)$ and $\pi$ defined at \eqref{kform} and \eqref{piform} on the Rado graph $R$, 
\begin{enumerate}
\item for universal $A,B>0$, 
\[
\|K_i^\ell - \pi\| \le A e^{\log_2^*i} e^{-B\ell} 
\]
for all $i\in \N$, $\ell \ge 1$, and 
\item for universal $C>0$, if $2^{(k)} = 2^{2^{\cdot^{\cdot^{\cdot^{2}}}}}$ is the tower of $2$'s of height $k$, 
\[
\|K^\ell_{2^{(k)}} - \pi\| \ge C
\]
for all $\ell\le k$. Here $\|K_i^\ell - \pi\| = \frac{1}{2}\sum_{j=0}^\infty |K^\ell(i,j) - \pi(j)|$ is the total variation  distance and $\log_2^* i $ is the number of times $\log_2$ needs to be applied, starting from $i$, to get a result $\le 1$. 
\end{enumerate}
\end{thm}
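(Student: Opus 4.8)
The plan is to first pin down the geometry of $K$ and $\pi$, then treat the two parts separately. A direct summation shows that for every vertex $v$ its ``upward'' neighbours $\{j>v:\ \text{bit }v\text{ of }j\text{ is }1\}$ carry $Q$-mass exactly $1/(2^{2^v}+1)$, while its ``downward'' neighbours $\{j<v:\ \text{bit }j\text{ of }v\text{ is }1\}$ carry $Q$-mass $r(v):=\sum_{a:\,\text{bit }a\text{ of }v=1}2^{-(a+1)}\in[0,1)$, the reversal of the binary string of $v$. Hence $Q(N(v))=r(v)+1/(2^{2^v}+1)$ and $\pi(v)=Q(v)Q(N(v))/Z$. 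Two features drive everything: the upward mass is \emph{doubly} exponentially small, whereas $r(v)\ge 2^{-(\lfloor\log_2 v\rfloor+1)}\ge 1/(2v)$ is never too small. Consequently, from $v$ the walk moves to one of the bit-positions of $v$, hence to a vertex $\le\lfloor\log_2 v\rfloor$, with probability $1-2^{-\Omega(2^v)}$; iterating, after $t$ steps the walk is, with overwhelming probability, at a vertex of size at most the $t$-th iterate of $\log_2$ applied to $i$, which is $O(1)$ once $t\ge\log_2^*i$.

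For part (1) I would combine a spectral gap with this ``logarithmic descent''. \emph{Gap.} Establish a Poincar\'e inequality $\Var_\pi(f)\le C_P\,\mathcal E(f,f)$ for the Dirichlet form $\mathcal E$ of $K$; this is the step that uses Hardy's inequality for trees. The point is that $\mathcal E$ dominates, up to a universal factor, the Dirichlet form of a suitably weighted walk on the spanning tree with parent map $v\mapsto\lfloor\log_2 v\rfloor$ — the tree in which $i$ sits at depth $\asymp\log_2^*i$ — and Hardy's tree criterion (finiteness of a supremum over the tree of a product of a ``resistance'' and a ``downstream $\pi$-mass'') certifies $C_P<\infty$. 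Together with a bound keeping the smallest eigenvalue away from $-1$, this gives $\|K^\ell g-1\|_{L^2(\pi)}\le\beta^\ell\|g-1\|_{L^2(\pi)}$ for every probability density $g$, with a universal $\beta<1$. \emph{Descent.} Fix a large universal $V_0$ and let $\tau$ be the first time the walk reaches $\{0,1,\dots,V_0\}$. Using the descent estimate, plus the facts that an upward jump from a vertex $v>V_0$ has probability at most $2v\,2^{-2^v}$ and lands at a vertex from which re-descent costs only $O(\log_2^* v)$ further steps, one obtains a tail bound $\P_i(\tau>\log_2^*i+s)\le Ce^{-cs}$ with $c,C>0$ universal. \emph{Combine.} With $M_0:=\max_{v\le V_0}\sqrt{K^2(v,v)/\pi(v)-1}<\infty$, conditioning on the value of $\tau$ and using the $L^2$ contraction from $X_\tau$ gives
\[
\|K^\ell_i-\pi\|_{\mathrm{TV}}\ \le\ \P_i\!\bigl(\tau>\tfrac\ell2\bigr)+\tfrac12 M_0\,\beta^{\,\ell/2-1}\ \le\ Ce^{c\log_2^*i}e^{-c\ell/2}+\tfrac12 M_0\,\beta^{\,\ell/2-1},
\]
which is of the claimed form $Ae^{\log_2^*i}e^{-B\ell}$ (the bound being vacuous when $\ell=O(\log_2^*i)$).

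For part (2), specialise to $i=2^{(k)}=2^{2^{(k-1)}}$, a power of two, so its only downward neighbour is $2^{(k-1)}$, carrying $Q$-mass $2^{-(2^{(k-1)}+1)}$, while its upward neighbours carry total $Q$-mass $1/(2^{(k+2)}+1)$; the ratio of the two is at most $2^{\,2^{(k-1)}+1-2^{(k+1)}}\le 1/4$, and it is super-summable in $k$. Hence $K(2^{(m)},2^{(m-1)})=1-\eta_m$ with $\sum_{m\ge2}\eta_m$ tiny and $K(2,1)=17/21$, so for every $\ell\le k$,
\[
K^\ell\!\bigl(2^{(k)},\,2^{(k-\ell)}\bigr)\ \ge\ \prod_{m=k-\ell+1}^{k}K\bigl(2^{(m)},2^{(m-1)}\bigr)\ \ge\ \prod_{m=1}^{\infty}K\bigl(2^{(m)},2^{(m-1)}\bigr)\ =:\ c_0\ >\ \tfrac12 ,
\]
because each factor lies in $(0,1)$. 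Finally bound $\pi(2^{(k-\ell)})\le Q(2^{(k-\ell)})/Z\le\max\{\pi(0),\pi(1),\pi(2)\}<c_0$, using the explicit small-vertex values of $Q(N(\cdot))$ and the crude bound $Z\ge Q(0)Q(N(0))+Q(1)Q(N(1))$; then $\|K^\ell_{2^{(k)}}-\pi\|\ge K^\ell(2^{(k)},2^{(k-\ell)})-\pi(2^{(k-\ell)})\ge c_0-\max\{\pi(0),\pi(1),\pi(2)\}=:C>0$, uniformly in $k$ and in $\ell\le k$.

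The main obstacle is the Poincar\'e inequality in part (1): one must identify the weighted tree that correctly dominates $\mathcal E$ — the geometric weights $Q(j)=2^{-(j+1)}$ have to be transported into tree conductances in the right way — and verify Hardy's tree condition for it, while separately keeping the bottom of the spectrum under control. The logarithmic-descent bookkeeping (especially the recursive re-absorption of the doubly-exponentially rare upward excursions, so that the $e^{\log_2^*i}$ prefactor and nothing worse survives) is a secondary technical point, and part (2) is elementary once $Q(N(v))$ has been computed.
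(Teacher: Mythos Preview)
Your overall architecture matches the paper: for the upper bound, couple a spectral gap (via Hardy's inequality on a spanning tree) with a stopping-time ``descent'' showing the walk reaches a bounded set in $O(\log_2^* i)$ steps; for the lower bound, track the walk from $2^{(k)}$ down the tower. The descent and the splicing with the $L^2$ contraction are essentially the paper's Section~\ref{upperproof}, and your lower-bound computation is correct (indeed more explicit than the paper, which for the binary model defers to \cite{diaconis2021complexity}); the phrase ``because each factor lies in $(0,1)$'' is a non sequitur for $c_0>\tfrac12$, but that inequality is easily checked numerically.

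There is, however, a real problem in the spectral-gap step: the tree you propose, with parent map $v\mapsto\lfloor\log_2 v\rfloor$ (the \emph{highest} set bit), does \emph{not} yield a finite Hardy constant. For odd $x_0$, test with $f=\un_{\{x_0\}}$: on this tree the only edges touching $x_0$ carry total weight $Q(x_0)\bigl(Q(\lfloor\log_2 x_0\rfloor)+\sum_{c=2^{x_0}}^{2^{x_0+1}-1}Q(c)\bigr)\asymp Q(x_0)/x_0$, whereas $\pi(x_0)\asymp Q(x_0)$ because $0\in N(x_0)$. Hence the tree Poincar\'e constant is $0$ and the Hardy supremum is $+\infty$. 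The paper instead takes $p(x)=\min N(x)$, the \emph{lowest} set bit; the crucial gain is the two-sided comparison $Q(p(x))\le Q(N(x))\le 2Q(p(x))$ (see \eqref{QQQH}), which lets one replace $\pi$ by the tree measure $\mu(x)=Q(x)Q(p(x))$ before invoking Hardy. The verification of the Hardy supremum (Lemmas~\ref{lem13}, \ref{prel1}, \ref{prel3} and the proof of Proposition~\ref{pro12}) hinges on this comparison and would fail for your tree --- see also Remark~\ref{Rem17}(b). Note that the depth of the correct tree is \emph{not} uniformly $\log_2^* x$ (odd $x$ sit at depth~$1$); the $\log_2^*$ enters only through the descent argument on the full graph, not through the tree used for the gap.
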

The proofs allow for some variation in the measure $Q$. They also work for the $G(\infty, p)$ model of $R$,  though some modification is needed since then $K$ and $\pi$ are random. 

Theorem \ref{mainthm} answers a question in Diaconis and Malliaris~\cite{diaconis2021complexity}, who proved the lower bound. Most Markov chains on countable graphs restrict attention to locally finite graphs~\cite{woess}. For Cayley graphs, Bendikov and Saloff-Coste~\cite{bendikovsaloffcoste} begin the study of more general transitions and point out how few tools are available. See also \cite{gh20, martineau}. Studying the geometry of a space (here $R$) by studying the properties of the Laplacian (here $I-K$) is a classical pursuit (``Can you hear the shape of a drum?'') --- see \cite{jorgensonlang}. 

Section \ref{background} gives background on the Rado graph, Markov chains and ball walks, and Hardy's inequalities. Section \ref{preliminaries} gives preliminaries on the behavior of the neighborhoods of the $G(\infty, p)$ model. The lower bound in Theorem \ref{mainthm} is proved in Section \ref{lowerproof}. Both Sections \ref{preliminaries} and \ref{lowerproof} give insight into the geometry of $R$. The upper bound on Theorem \ref{mainthm} is proved by proving that the Markov chain $K$ has a spectral gap. Usually, a spectral gap alone does not give sharp rates of convergence. Here, for any start $i$, we show the chain is in a neighborhood of $0$ after order $\log_2^* i$ steps. Then the spectral gap shows convergence in a bounded number of further steps. This argument works for both models of $R$. It is given in Section \ref{upperproof}.

The spectral gap for the $G(\infty, p)$ model is proved in Section \ref{spectralgap} using a version of Cheeger's inequality for trees. For Rado's binary model, the spectral gap is proved by a novel version of Hardy's inequality for trees in Section \ref{spectralgap2}. This is the first probabilistic application of this technique, which we hope will be useful more generally. 
There are two appendices containing technical details for the needed versions of Cheeger's and Hardy's inequalities. 

\vskip.2in
\noindent{\bf Acknowledgments:} We thank Peter Cameron, Maryanthe Malliaris, Sebastien Martineau, Yuval Peres, and Laurent Saloff-Coste for their help.

\section{Background on $R$, Markov chains, and Hardy's inequalities}\label{background}

\subsection{The Rado graph}\label{fraisse}
A definitive survey on the Rado graph (with full proofs) is in Peter Cameron's fine article \cite{cameron}.  We have also found the Wikipedia entry on the Rado graph and Cameron's follow-up paper \cite{cameron2} useful.

In Rado's model, the graph $R$ has vertex set $\N = \{0,1,2,\ldots\}$ and an undirected edge from $i$ to $j$ if $i<j$ and the $i^{\textup{th}}$ digit of $j$ is a one. There are many other constructions. The vertex set can be taken as the prime numbers that are $1$ (mod $4$) with an edge from $p$ to $q$ if the Legendre symbol $(\frac{p}{q}) = 1$. In \cite{diaconis2021complexity}, the graph appears as an induced subgraph of the commuting graph of the group $U(\infty, q)$ --- infinite upper-triangular matrices with ones on the diagonal and entries in $\mathbb{F}_q$. The vertices are points of $U(\infty, q)$. There is an edge from $x$ to $y$ if and only if the commutator $x^{-1} y^{-1}xy$ is zero. The infinite \ER graphs $G(\infty,p)$ are almost surely isomorphic to $R$ for all $p$, $0<p<1$. 

The graph $R$ has a host of fascinating properties:
\begin{itemize}
\item It is stable in the sense that deleting any finite number of vertices or edges yields an isomorphic graph. So does taking the complement.
\item It contains all finite or countable graphs as induced subgraphs. Thus, the (countable) empty graph and complete graphs both appear as induced subgraphs. 
\item The diameter of $R$ is two --- consider any $i\ne j \in \N$ and let $k$ be a binary number with ones in positions $i$ and $j$ and zero elsewhere. Then $i\sim k\sim j$. 
\item Each vertex is connected to ``half'' of the other vertices: $0$ is connected to all the odd vertices, $1$ to $0$ and all numbers congruent to $2$ or $3$ (mod $4$), and so on. 
\item $R$ is highly symmetric: Any automorphism between two induced subgraphs can be extended to all of $R$ (this is called homogeneity). The automorphism group has the cardinality of the continuum. 
\item $R$ is the ``limit'' if the collection of all finite graphs (Fraiss\'e limit). Let us spell this out. A {\it relational structure} is a set with a finite collection of relations (we are working in first order logic without constants or functions). For example, $\mathbb{Q}$ with $x<y$ is a relational structure. A graph is a set with one symmetric relation. The idea of a ``relational sub-structure'' clearly makes sense. A class $\mc$ of structures has the {\it amalgamation property} if for any $A, B_1, B_2\in \mc$ with embeddings $A\stackrel{f_1}{\to} B_1$ and $A\stackrel{f_2}{\to} B_2$, there exists $C\in \mc$ and embeddings $B_1\stackrel{g_1}{\to} C$ and $B_2 \stackrel{g_2}{\to} C$ such that $g_1f_1 = g_2f_2$. A countable relational structure $M$ is {\it homogeneous} if 
any isomorphism between finite substructures can be extended to an 
automorphism of $M$. Graphs and $\mathbb{Q}$ are homogeneous relational structures. A class $\mc$ has the joint embedding property if for any $A,B \in \mc$ there is a $C \in \mc$ so that $A$ and $B$ are embeddable in $C$.
\begin{thm}[Fraiss\'e]
Let $\mc$ be a countable class of finite structures with the joint embedding property and closed under `induced' isomorphism with amalgamation. Then there exists a unique countable homogeneous $\mm$ with $\mc$ as induced substructures. 
\end{thm}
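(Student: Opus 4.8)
The plan is to run the classical back-and-forth construction of the Fraïss\'e limit. The organizing notion is an \emph{extension property}: call a countable structure $N$ \emph{$\mc$-saturated} if every finite substructure of $N$ lies in $\mc$ and, whenever $A\subseteq B$ with $B\in\mc$ and $f\colon A\to N$ is an embedding, there is an embedding $g\colon B\to N$ extending $f$. The argument splits into three parts: construct an $\mc$-saturated $\mm$ into which every member of $\mc$ also embeds (existence); show that $\mc$-saturation forces ultrahomogeneity; and show that any two countable $\mc$-saturated structures are isomorphic, which gives uniqueness once one checks that a countable homogeneous structure with age $\mc$ is automatically $\mc$-saturated.

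For existence, I would first enumerate $\mc$ up to isomorphism as $D_0,D_1,\dots$, which is possible because $\mc$ is countable, and then build an increasing chain $A_0\subseteq A_1\subseteq\cdots$ of members of $\mc$ by dovetailing two kinds of steps. A \emph{universality step} applied to $A_n$ uses the joint embedding property on $A_n$ and $D_n$ to produce $A_{n+1}\in\mc$ containing isomorphic copies of both; performing these cofinally ensures every $D\in\mc$ embeds into $\mm:=\bigcup_n A_n$. An \emph{extension step} works off a master list of all triples $(A,B,f)$ with $A\subseteq B$ in $\mc$ and $f\colon A\to A_m$ an embedding into one of the structures already built; there are only countably many such, since each $A_m$ is finite and $\mc$ is countable, so they can be scheduled so that every triple is eventually served. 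For the triple currently due, apply amalgamation to $f\colon A\to A_n$ and the inclusion $A\hookrightarrow B$ to obtain $A_{n+1}\in\mc$ together with embeddings of $A_n$ and of $B$ agreeing on $A$, and identify $A_n$ with its image so that $f$ now extends to an embedding of $B$ into $A_{n+1}$. The resulting $\mm=\bigcup_n A_n$ is countable; each of its finite substructures sits inside some $A_n\in\mc$ and hence lies in $\mc$ (using that $\mc$ is closed under induced substructures and isomorphism); it contains a copy of every $D_n$; and it is $\mc$-saturated, since any embedding of a finite $A\in\mc$ into $\mm$ has image inside some $A_n$ and so appears on the master list.

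For homogeneity, given an isomorphism $h$ between finite substructures $U,V$ of $\mm$, I would fix an enumeration $\mm=\{m_0,m_1,\dots\}$ and extend $h$ by back-and-forth: at even stages, to bring $m_k$ into the domain, let $A$ be the current finite domain and $B=\langle A\cup\{m_k\}\rangle$, note $A,B\in\mc$, and use $\mc$-saturation to extend $h\colon A\to\mm$ to an embedding of $B$, thereby choosing the image of $m_k$; at odd stages do the symmetric thing on the range, applying $\mc$-saturation to $h^{-1}$. The union is an automorphism of $\mm$ carrying $U$ to $V$. For uniqueness, suppose $M$ and $M'$ are both countable homogeneous with age $\mc$. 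Each is $\mc$-saturated: given $A\subseteq B$ in $\mc$ and an embedding $f\colon A\to M$, realize a copy $B'$ of $B$ inside $M$ (its age is $\mc$), observe that $f$ and the inclusion $A\hookrightarrow B'$ are two embeddings of $A$ into $M$ and so are related by a partial isomorphism of $M$, extend it to an automorphism by homogeneity, and compose to place $B$ inside $M$ over $f$. A back-and-forth between $M$ and $M'$ --- alternately matching the next unused element of $M$ and of $M'$ using $\mc$-saturation on each side --- then yields an isomorphism $M\to M'$.

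The one place demanding care is the bookkeeping in the existence step: extension tasks are \emph{created} during the construction (each new $A_n$ spawns new triples $(A,B,f)$), so the schedule must be a genuine dovetailing that eventually discharges every task, including those born at arbitrarily late stages, while still performing universality steps cofinally. This is routine but is the real crux; the only structural inputs beyond it are precisely the joint embedding and amalgamation hypotheses, each used once per step.
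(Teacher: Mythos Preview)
The paper does not prove this theorem at all; it is quoted as a classical result in the background Section~\ref{fraisse} to motivate the Rado graph as the Fra\"iss\'e limit of finite graphs, with no argument supplied. Your write-up is the standard proof of Fra\"iss\'e's theorem via the extension (saturation) property and back-and-forth, and it is correct as stated, including your remark about the bookkeeping needed so that every extension task eventually gets served.
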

The rationals $\mathbb{Q}$ are the Fraiss\'e limit of finite ordered sets. The Rado graph $R$ is the Fraiss\'e limit of finite graphs. We have (several times!) been told ``for a model theorist, the Rado graph is just as interesting as the rationals''. 
\end{itemize}
There are many further, fascinating properties of $R$; see \cite{cameron}.
\subsection{Markov chains}
A {\it transition matrix} $K(i,j)$, $0\le i,j<\infty$, $K(i,j)\ge 0$, $\sum_{j=0}^\infty K(i,j)=1$ for all $i$, $0\le i<\infty$, generates a Markov chain through its powers
\[
K^\ell(i,j) = \sum_{k=0}^\infty K(i,k) K^{\ell-1}(k,j).
\]
A probability distribution $\pi(i)$, $0\le i<\infty$, is {\it reversible} for $K$ if 
\begin{align}\label{revcond}
\pi(i) K(i,j) = \pi(j) K(j,i) \ \ \text{ for all } 0\le i,j<\infty.
\end{align}
\begin{ex}
With definitions \eqref{kform}, \eqref{piform} on the Rado graph, if $i\sim j$,
\[
\pi(i)K(i,j) = \frac{Q(i)Q(N(i))}{Z} \frac{Q(j)}{Q(N(i))} = \frac{Q(i)Q(j)}{Z} = \pi(j) K(j,i).
\]
(Both sides are zero if $i\not\sim j$.) 
\end{ex}
In the above example, we think of $K(i,j)$ as a `ball walk': From $i$, pick a neighbor $j$ with probability proportional to $Q(j)$ and move to $j$. We initially found the neat reversible measure surprising. Indeed, we and a generation of others thought that ball walks  would have $Q$ as a stationary distribution. Yuval Peres points out that, given a probability $Q(j)$ on the vertices, assigning symmetric weight $Q(i)Q(j)$ to $i\sim j$ gives this $K$ for the weighted local walk. A double ball walk --- ``from $i$, choose a neighbor $j$ with probability proportional to $Q(j)$, and from $j$, choose a neighbor $k$ with probability proportional to $Q(k)/Q(N(k))$'' --- results in a reversible Markov chain with $Q$ as reversing measure. Note that these double ball walks don't require knowledge of normalizing constants. All of this suggests ball walks as reasonable objects to study.

Reversibility \eqref{revcond} shows that $\pi$ is a stationary distribution for $K$:
\[
\sum_{i=0}^\infty\pi(i) K(i,j) = \sum_{i=0}^\infty \pi(j)K(j,i) = \pi(j)\sum_{i=0}^\infty K(j,i) = \pi(j).
\]
In our setting, since the Rado graph has diameter $2$, the walk is connected. It is easy to see that it is aperiodic. Thus, the $\pi$ in \eqref{piform} is the unique stationary distribution. Now, the fundamental theorem of Markov chain theory shows, for every starting state $i$, $K^\ell(i,j) \to \pi(j)$ as $\ell \to \infty$, and indeed,
\[
\lim_{\ell \to\infty} \|K^\ell_i - \pi\|=0.
\]
Reversible Markov chains have real spectrum. Say that $(K,\pi)$ has a {\it spectral gap} if there is $A>0$ such that for every $f\in \ell^2(\pi)$, 
\begin{align}\label{spectral}
\sum_i (f(i)-\bar{f})^2 \pi(i) &\le A \sum_{i,j} (f(i)-f(j))^2 \pi(i) K(i,j),
\end{align}
where $\bar{f} = \sum_{i=0}^\infty f(i)\pi(i)$. (Then the gap is at least $1/A$.) For chains with a spectral gap, for any $i$,
\begin{align}\label{spectral2}
4\|K_i^\ell - \pi\|^2 \le \frac{1}{\pi(i)}\biggl(1-\frac{1}{A}\biggr)^{2\ell}. 
\end{align}
Background on Markov chains, particularly rates of convergence, can be found in the readable book of Levin and Peres~\cite{levinperes}. For the analytic part of the theory, particularly \eqref{spectral} and \eqref{spectral2}, and many refinements, we recommend \cite{MR1490046}.

There has been a healthy development in Markov chain circles around the theme `How does a Markov chain on a random graph behave?'. One motivation being, `What does a typical convergence rate look like?'. The graphs can be restricted in various natural ways (Cayley graphs, regular graphs of fixed degree or fixed average degree, etc.). A survey of by now classical work is Hildebrand's survey of `random-random walks' \cite{hildebrand}. Recent work by Bordenave and coauthors can be found from \cite{bordenave1, bordenave2}. For sparse \ER graphs, there is remarkable work on the walk restricted to the giant component. See \cite{nachmiasperes}, \cite{fountoulakisreed} and \cite{berestyckilubetzkyperessly}.

It is worth contrasting these works with the present efforts. The above results pick a neighbor uniformly at random. In the present paper, the ball walk drives the walk back towards zero. The papers above are all on finite graphs. The Makov chain of Theorem \ref{mainthm} makes perfect sense on finite graphs. The statements and proofs go through (with small changes) to show that order $\log_2^* i$ steps are necessary and sufficient.  (For the uniform walk on $G(n,1/2)$, a bounded number of steps suffice from most initial states, but there are states from which $\log_2^*n $ steps are needed.)

\subsection{Hardy's inequalities}
A key part of the proof of Theorem \ref{mainthm} applies Hardy's inequalities for trees to prove a Poincar\'e inequality (Cf.~\eqref{spectral}) and hence a bound on the spectral gap. Despite a large expository literature, Hardy's inequalities remain little known among probabilists. Our application can be read without this expository section but we hope that some readers find it useful. Extensive further references, trying to bridge the gap between probabilists and analysts, is in \cite{klaassenwellner}.

Start with a discrete form of Hardy's original inequality \cite[pp.~239--243]{hardy}. This says that if $a_n\ge 0$, $A_n = a_1+\cdots+a_n$, then
\[
\sum_{n=1}^\infty \frac{A_n^2}{n^2} \le 4\sum_{n=1}^\infty a_n^2,
\]
and the constant $4$ is sharp. Analysts say that ``the Hardy operator taking $\{a_n\}$ to $\{A_n/n\}$ is bounded from $\ell^2$ to $\ell^2$''. Later writers showed how to put weights in. If $\mu(n)$ and $\nu(n)$ are positive functions, one aims for
\[
\sum_{n=1}^\infty A_n^2 \mu(n) \le A \sum_{n=1}^\infty a_n^2\nu(n),
\]
for an explicit $A$ depending on $\mu(n)$ and $\nu(n)$. If $\mu(n)= 1/n^2$ and $\nu(n)=1$, this gives the original Hardy inequality. 

To make the transition to a probabilistic application, take $a(n) = g(n)-g(n-1)$ for $g$ in $\ell^2$. The inequality becomes
\begin{align}\label{hardy2}
\sum_{n=1}^\infty g(n)^2 \mu(n) \le A \sum_{n=1}^\infty(g(n)-g(n-1))^2 \nu(n).
\end{align}
Consider a `birth and death chain' which transits from $j$ to $j+1$ with probability $b(j)$ and from $j$ to $j-1$ with probability $d(j)$. Suppose that this has stationary distribution $\mu(j)$ and that $\sum_j g(j)\mu(j)=0$. Set $\nu(j) = \mu(j)d(j)$. Then \eqref{hardy2} becomes (following simple manipulations)
\begin{align}\label{hardy3}
\var(g) \le A \sum_{j,k} (g(j)-g(k))^2 \mu(j)K(j,k)
\end{align}
with $K(j,k)$ the transition matrix of the birth and death chain. This gives a Poincar\'e inequality and spectral gap estimate.

A crucial ingredient for applying this program is that the constant $A$ must be explicit and manageable. For birth-death chains, this is indeed the case. See \cite{MR1709277} or the applications in \cite{diaconismatchettwood}. 

The transition from \eqref{hardy2} to \eqref{hardy3} leans on the one-dimensional setup of birth-death chains. While there is work on Hardy's inequalities in higher dimensions, it is much more complex; in particular, useful forms of good constants $A$ seem out of reach. In \cite{MR1709277}, Miclo has shown that for a general Markov transition matrix $K(i,j)$, a spanning tree with the graph underlying $K$ can be found. There is a useful version of Hardy's inequality for trees due to Evans, Harris and Pick~\cite{MR1345719}. This is the approach developed in Section~\ref{spectralgap2} below which gives further background and details. 

Is approximation by trees good enough? There is some hope that the best tree is good enough (see \cite{zbMATH01054104}). In the present application, the tree chosen gives the needed result.

\subsection{The $\log^*$ function}
Take any $a>1$. The following is a careful definition of $\log_a^*x$ for $x\ge 0$. First, an easy verification shows that the map $x\mapsto (\log x)/x$ on $(0,\infty)$ is unimodal, with a unique maximum at $x = e$ (where its value is $1/e$), and decaying to $-\infty$ as $x\to 0$ and to $0$ as $x\to \infty$. Thus, if $a > e^{1/e}$, then for any $x>0$,
\[
\log_a x = \frac{\log x}{\log a} \le \frac{x}{e\log a} < x.
\]
Since $\log_a$ is a continuous map, this shows that if we start with any $x>0$, iterative applications of $\log_a$ will eventually lead to a point in $(0,a)$ (because there are no fixed points of $\log_a$ above that, by the above inequality), and then another application of $\log_a$ will yield a negative number. This allows us to define $\log_a^*x$ as the minimum number of applications of $\log_a$, starting from $x$, that gets us a nonpositive result. 

If $a \le e^{1/e}$, the situation is a bit more complicated. Here, $\log a \le 1/e$, which is the maximum value of the unimodal map $x\mapsto (\log x)/x$. This implies that there exist exactly two points $0 < y_a\le x_a$ that are fixed points of $\log_a$ (with $y_a=x_a$ if $a = e^{1/e}$). Moreover, $\log_a x < x$ if $x\notin [y_a,x_a]$, and $\log_a x\ge x$ if $x\in [y_a,x_a]$. Thus, the previous definition does not work. Instead, we define $\log_a^*x$ to be the minimum number of applications of $\log_a$, starting from $x$, that leads us to a result $\le x_a$. In both cases, defining $\log_a^*0=0$ is consistent with the conventions. Note that $\log_a^* x\ge 0$ for all $x\ge 0$.

\section{The geometry of the random model}\label{preliminaries}
Throughout this section the graph is $G(\infty,1/2)$ --- an \ER graph on $\N=\{0,1,2,\ldots\}$ with probability $1/2$ for each possible edge. From here on, we will use the notation $\N_+$ to denote the set $\{1,2,\ldots\}$ of strictly positive integers. Let $Q(x)=2^{-(x+1)}$ for $x \in \N$. The transition matrix 
\[
K(x,y) =\frac{Q(y)}{Q(N(x))} \un_{\{y\in N(x)\}}
\]
and its stationary distribution $\pi(x) = Z^{-1} Q(x)Q(N(x))$ are thus random variables. Note that $N(x)$, the neighborhood of $x$, is random. The main result of this section shows that this graph, with vertices weighted by $Q(x)$, has its geometry controlled by a tree rooted at $0$. This tree will appear in both lower and upper bounds on the mixing time for the random model.
  
 To describe things, let $p(x)=\min N(x)$ ($p$ is for `parent', not to be confused with the edge probability $p$ in $G(\infty, p)$).  We need some preliminaries about the mapping $p$.
\begin{lmm}\label{lem1}
Let $\cB$ be the event that for all $x\in \N_+$, $p(x) <x$. Then we have that $\pp(\cB)\ge 1/4$.
\end{lmm}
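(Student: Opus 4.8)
The plan is to express $\cB$ as a countable intersection of \emph{independent} events and then bound the resulting infinite product from below. For $x\in\N_+$, let $A_x$ be the event that $x$ has no neighbor in $\{0,1,\dots,x-1\}$. Since $x\notin N(x)$, the event $A_x$ is exactly the event that $p(x)=\min N(x)>x$, i.e.\ the negation of $\{p(x)<x\}$; hence $\cB=\bigcap_{x\in\N_+}A_x^c$.

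The key observation is independence. The event $A_x$ is measurable with respect to the $x$ edge-indicators $\{\1_{\{i\sim x\}}:0\le i<x\}$, and for distinct $x,x'$ these families involve disjoint sets of potential edges of $G(\infty,1/2)$. Since all edge-indicators are mutually independent $\mathrm{Bernoulli}(1/2)$ variables, the events $(A_x)_{x\in\N_+}$ are mutually independent, and $\pp(A_x)=2^{-x}$. Therefore, using continuity from above (the partial intersections $\bigcap_{x=1}^{n}A_x^c$ decrease to $\cB$) together with finite independence,
\[
\pp(\cB)=\prod_{x=1}^\infty \pp(A_x^c)=\prod_{x=1}^\infty\bigl(1-2^{-x}\bigr),
\]
the product converging since $\sum_x 2^{-x}<\infty$.

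It then remains to check $\prod_{x=1}^\infty(1-2^{-x})\ge 1/4$. Pulling out the $x=1$ factor and applying the Weierstrass product inequality $\prod_i(1-a_i)\ge 1-\sum_i a_i$ (valid for $a_i\in[0,1]$) to the remaining factors,
\[
\prod_{x=1}^\infty\bigl(1-2^{-x}\bigr)=\frac12\prod_{x=2}^\infty\bigl(1-2^{-x}\bigr)\ge\frac12\Bigl(1-\sum_{x=2}^\infty 2^{-x}\Bigr)=\frac12\cdot\frac12=\frac14,
\]
which gives the claim. (In fact the product equals $\approx 0.2888$, so the bound has room to spare.) I do not anticipate a serious obstacle: the only points needing care are the disjointness of edge sets that yields independence and the elementary product estimate at the end, both of which are routine.
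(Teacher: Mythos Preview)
Your proof is correct. Both you and the paper identify the same events $A_x$ and observe that they are mutually independent (since they depend on disjoint edge sets). From there the routes diverge.

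The paper, after noting independence, proceeds via the Kounias--Hunter--Worsley bound on $\PP\bigl(\bigcup_{x\ge 1}A_x\bigr)$, which only uses pairwise independence, and obtains $\PP(\cB^c)\le 3/4$. You instead exploit full independence to write $\PP(\cB)=\prod_{x\ge 1}(1-2^{-x})$ exactly, and then bound the product via the Weierstrass inequality after peeling off the first factor. Your approach is more elementary and more direct; in fact the paper itself gives the exact product formula a couple of lemmas later (for general edge probability $p$, in terms of the partition generating function), precisely because the Kounias--Hunter--Worsley bound becomes uninformative for small $p$. What the paper's route buys is a self-contained inclusion--exclusion style argument that would survive under mere pairwise independence, but since full independence holds here, your shortcut is entirely appropriate and recovers the same constant $1/4$.
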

\begin{proof}
Denote 
\bq
\bar E&\df& \{\{x,y\}: x\neq y\in \N\},\eq
and for
any $e\in\bar E$, consider 
\bq
B_{e}&=&\un_E(e),\eq
where $E$ is the set of  edges in $G(\infty, 1/2)$, so that  $(B_e)_{e\in \bar E}$ is a family of  independent Bernoulli variables of parameter $1/2$.

For $x\in \N_+$, define $A_x$ the event that $x$ is not linked in $\cG$ to a smaller vertex. Namely, we have formally
\bq
 A_x&\df&\bigcap_{y\in\lin 0, x-1\rin} \{B_{\{y,x\}}=0\},\eq
 where $\lin 0, x-1\rin := \{0,1,\ldots,x-1\}$. Note that the family $(A_x)_{x\in\N_+}$ is independent, and in particular, its events are pairwise independent. We are thus in position to apply 
Kounias--Hunter--Worsley bounds \cite{zbMATH03283808,zbMATH03544899,zbMATH03784109} (see also the survey \cite{enwiki:998349836}), to see that for any $n\in\N_+$,
\bq
\PP\biggl(\bigcup_{x\in\lin 1, n\rin} A_x\biggr)&\leq &\min\biggl\{\sum_{x\in\lin 1, n\rin} \PP(A_x)-\PP(A_1)\sum_{y\in\lin 2, n\rin}\PP(A_y), 1\biggr\},\eq
where we used that
\bq
\PP(A_1)\geq \PP(A_2)\geq \cdots \geq \PP(A_n),
\eq
which holds because 
\bq
\fo x\in\N_+,\qquad \PP(A_x)&=&\prod_{y\in\lin 0, x-1\rin} \PP(B_{\{y,x\}}) = \f1{2^x}.
\eq
We deduce that
\bq
\PP\biggl(\bigcup_{x\in\lin1, n\rin} A_x\biggr)&\leq &\min\biggl\{\sum_{x\in\lin1,n\rin} \f1{2^x}-\f12\sum_{y\in\lin 2, n\rin}\f1{2^y}, 1\biggr\}\\
&=&\f12+\f14-\f1{2^{n+1}}.\eq
Letting $n$ tends to infinity, we get
\bq
\PP\biggl(\bigcup_{x\in\N_+} A_x\biggr)&\leq &\f34.\eq
To conclude, note that
\bq
\cB^{\mathrm{c}}&=&\bigcup_{x\in\N_+} A_x.\eq
\end{proof}

\begin{remark}\label{remKHW}
Assume that instead of $1/2$, the edges of $\bar E$ belong to $E$ with probability $p\in(0,1)$ (still independently),
the corresponding notions receive $p$ in index.
The above computations show
\bq
\PP_p(\cB)&\geq &1- (2-3p +p^2)\wedge 1,\eq
so that
$\PP_p(\cB)$ goes to $1$ as $p$ goes to $1$, but this bounds provides no information for  $p\in(0,(3-\sqrt{5})/2]$.
\end{remark}
\par
In fact the above observation shows that the Kounias--Hunter--Worsley bound is not optimal, at least for small $p>0$. So let us give another computation of $\PP_p(\cB)$:
\begin{lmm}\label{lem3}
Consider the situation described in Remark \ref{remKHW}, with $p\in(0,1)$. We have
\bq
\PP_p(\cB)&=&\biggl(\sum_{n\in\N} p(n)(1-p)^n\biggr)^{-1}\eq
where $p(n)$ is the number of partitions of $n$. In particular $\PP(\cB)>0$ for all $p\in(0,1)$.
\end{lmm}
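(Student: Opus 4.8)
The plan is to observe that, in contrast with the union $\cB^{\mathrm{c}}=\bigcup_{x\in\N_+}A_x$ handled via the Kounias--Hunter--Worsley bound in Lemma~\ref{lem1}, the event $\cB$ itself is an intersection of \emph{mutually} independent events, so its probability can be computed exactly as an infinite product, which is then identified through Euler's partition identity.

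First I would write $\cB=\bigcap_{x\in\N_+}A_x^{\mathrm{c}}$. Since $A_x$ is measurable with respect to the edge variables $(B_{\{y,x\}})_{0\le y\le x-1}$ and these index sets are pairwise disjoint, the family $(A_x^{\mathrm{c}})_{x\in\N_+}$ is mutually independent (not merely pairwise independent, as was all that was needed in Lemma~\ref{lem1}). The computation recorded in the proof of Lemma~\ref{lem1}, with $1/2$ replaced by $p$, gives $\PP_p(A_x)=\prod_{y\in\lin 0,x-1\rin}\PP_p(B_{\{y,x\}}=0)=(1-p)^x$, hence $\PP_p(A_x^{\mathrm{c}})=1-(1-p)^x$. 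Therefore
\[
\PP_p(\cB)=\prod_{x=1}^\infty\bigl(1-(1-p)^x\bigr),
\]
and since $\sum_{x\ge1}(1-p)^x<\infty$ for $p\in(0,1)$, this product converges to a strictly positive real number.

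Next I would invoke Euler's classical generating function for the partition counting function: for $|q|<1$,
\[
\prod_{x=1}^\infty\frac{1}{1-q^x}=\sum_{n=0}^\infty p(n)\,q^n,
\]
with the convention $p(0)=1$. Taking reciprocals and specialising to $q=1-p$ converts the displayed product into
\[
\PP_p(\cB)=\biggl(\sum_{n\in\N}p(n)(1-p)^n\biggr)^{-1},
\]
which is the assertion, and strict positivity is immediate. The only points needing care are the verification that the $A_x$ depend on genuinely disjoint edge sets — so that the exact product formula, not merely an inequality, is available — and matching the index conventions ($0\in\N$, $p(0)=1$) so that the partition series is summed from $n=0$; there is no analytic difficulty.
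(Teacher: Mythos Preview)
Your proof is correct and follows essentially the same approach as the paper: write $\cB$ as an intersection of mutually independent events, obtain $\PP_p(\cB)=\prod_{x\ge1}(1-(1-p)^x)$, and recognise the reciprocal as the partition generating series at $q=1-p$. The only cosmetic difference is that the paper expands the product via the distributive law to re-derive Euler's identity in place, whereas you cite it directly.
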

\begin{proof}
Indeed, we have
\bq
\cB&=&\bigcap_{x\in\N_+} A_x^{\mathrm{c}},\eq
so that by independence of the $A_x$, for $x\in\N_+$,
\bq
\PP_p(\cB)&=&\prod_{x\in\N_+} \PP(A_x^{\mathrm{c}})\\
&=&\biggl(\prod_{x\in\N_+} \f1{1-(1-p)^x}\biggr)^{-1}\\
&=&\biggl(\prod_{x\in\N_+} \sum_{n\in\N}(1-p)^{xn}\biggr)^{-1}\\
\eq
Let $\cN$ be the set of sequences of integers $(n_l)_{l\in\N_+}$ with all but finitely many elements equal to zero.
Applying the distributive law to the above expression, we have
\bqn{contra}
\nonumber\PP_p(\cB)&=&\biggl(\sum_{(n_l)_{l\in\N_+}\in\cN} \prod_{x\in\N_+}(1-p)^{xn_x}\biggr)^{-1}\\
&=&\biggl(\sum_{n\in\N} p(n)(1-p)^n\biggr)^{-1}\eqn
where $p(n)$ is the number of ways to write $n$ as $\sum_{x\in\N_+} xn_x$, with $(n_l)_{l\in\N_+}\in\cN$. 
\end{proof}

Consider the set of edges
\bq
F&\df& \{\{x,p(x)\}: x\in\N_+\}\eq
and the corresponding  graph $\cT\df (\N,F)$. Under $\cB$, it is clear that $\cT$ is a tree. But this is always true:
\begin{lmm}\label{lem4}
The graph  $\cT$ is a tree.
\end{lmm}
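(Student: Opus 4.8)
The plan is to verify the two defining properties of a tree — acyclic and connected — leaning throughout on the elementary observation that, since $p(x)=\min N(x)$ is a neighbour of $x$, we have $x\in N(p(x))$ and hence
\[
p(p(x))\ \le\ x\qquad\text{for every }x\in\N_+\text{ with }p(x)\in\N_+ .
\]
Consequently, iterating $p$ from any vertex either reaches $0$, or produces an orbit $x_0,x_1,x_2,\dots$ (all in $\N_+$) along which the even-indexed and the odd-indexed subsequences are each non-increasing sequences of non-negative integers, hence eventually constant.

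First I would prove acyclicity, by contradiction. Suppose $v_1-v_2-\cdots-v_k-v_1$ is a simple cycle in $\cT$ with $k\ge 3$ (indices read mod $k$), and let $v_m$ be its unique largest vertex; then $v_m\in\N_+$ and its cycle-neighbours $v_{m-1},v_{m+1}$ are strictly smaller. Each cycle edge lies in $F$, so it is of the form $\{y,p(y)\}$ with $y\in\N_+$. If both edges at $v_m$ had the form $\{v_m,p(v_m)\}$ we would get $v_{m-1}=p(v_m)=v_{m+1}$, which is impossible; so one of them, say $\{v_{m-1},v_m\}$, equals $\{v_{m-1},p(v_{m-1})\}$, i.e.\ $p(v_{m-1})=v_m$. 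Now I would push this around the cycle: once $p(v_j)=v_{j+1}$ is known, the edge $\{v_{j-1},v_j\}$ cannot equal $\{v_j,p(v_j)\}=\{v_j,v_{j+1}\}$ (that would force $v_{j-1}=v_{j+1}$, impossible for $k\ge 3$), so it equals $\{v_{j-1},p(v_{j-1})\}$, giving $p(v_{j-1})=v_j$; hence $p(v_i)=v_{i+1}$ for every $i$, and in particular every $v_i$ lies in $\N_+$. But then $v_1,p^2(v_1),p^4(v_1),\dots$ is a non-increasing sequence of non-negative integers by the displayed inequality, hence (being periodic) constant; yet it contains $v_1$ and $v_3=p^2(v_1)$, which are distinct since $k\ge 3$ — a contradiction. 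So $\cT$ has no cycle.

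Then I would address connectivity via the same trajectory: if $x_0=x,x_1,x_2,\dots$ reaches $0$, then $x_0-x_1-\cdots-0$ is a path in $\cT$ (each $\{x_n,p(x_n)\}$ is an edge of $\cT$), so $x$ lies in the component of $0$. On the event $\cB$ we have $p(x_n)<x_n$ at every step, so the trajectory strictly decreases and must hit $0$; hence $\cB$ forces $\cT$ to be connected, and with acyclicity $\cT$ is a tree on $\cB$ — consistent with the remark just before the lemma. This connectivity step is the main obstacle in full generality: off $\cB$ the trajectory need not decrease and may instead stabilise on a mutual-parent pair $\{c,d\}$ with $p(c)=d$, $p(d)=c$, whose component in $\cT$ is then a separate tree not joined to $0$; such pairs genuinely occur in $G(\infty,1/2)$ — e.g.\ $\{1,2\}$ arises whenever $0\not\sim 1$, $1\sim 2$ and $0\not\sim 2$. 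So the statement one can prove unconditionally is that \emph{every connected component of $\cT$ is a tree}, i.e.\ $\cT$ is a forest, which collapses to a single tree precisely when no such $2$-cycle occurs — in particular on the event $\cB$ that is used in the sequel.
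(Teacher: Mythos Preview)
Your acyclicity argument is correct and follows essentially the same route as the paper's: both establish that the cycle edges are coherently oriented as $p(v_i)=v_{i+1}$ for all $i$ (the paper phrases this as ``otherwise some vertex would have two exiting arrows''; you start from the maximal vertex and propagate), and then derive a contradiction from $v_{i+2}=p(p(v_i))\le v_i$. The only cosmetic difference is in the final step: the paper sharpens $\le$ to $<$ directly via $v_i\neq v_{i+2}$ and iterates to get $v_0<v_0$; you instead observe the even-indexed subsequence is non-increasing and periodic, hence constant, contradicting $v_1\neq v_3$.

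Where you go beyond the paper is on connectivity, and your analysis there is sharper than the paper's. The paper's proof of Lemma~\ref{lem4} only establishes acyclicity; it never addresses connectivity, yet the statement ``$\cT$ is a tree'' claims both. Your counterexample is valid: on the event $\{0\not\sim 1,\ 0\not\sim 2,\ 1\sim 2\}$ (which has probability $1/8$ in $G(\infty,1/2)$) one gets $p(1)=2$, $p(2)=1$, and the component of $\{1,2\}$ in $\cT$ is disjoint from that of $0$, since the component of $0$ consists precisely of those $x$ whose $p$-trajectory reaches $0$. So the correct unconditional statement is that $\cT$ is a \emph{forest}; it becomes a single tree exactly when no mutual-parent pair exists, in particular on $\cB$. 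This imprecision in the paper is harmless downstream --- the subsequent results (Propositions~\ref{pro6}--\ref{pro9}) are stated on $\cB$, and the later almost-sure extension only uses the subtrees $T_a$, which make sense in a forest --- but you are right to flag it.
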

\begin{proof}
The argument is by contradiction. Assume that $\cT$ contains a cycle, say $(x_l)_{l\in\ZZ_n}$ with $n\geq 3$.
Let us direct the a priori unoriented edges $\{x_l,x_{l+1}\}$, for $l\in\ZZ_n$, by putting an arrow from $x_l$ to $x_{l+1}$ (respectively from $x_{l+1}$ to $x_l$) if $p(x_l)=x_{l+1}$ (resp.\ $p(x_{l+1})=x_{l}$). Note that we either have 
\bqn{order}
\fo l\in\ZZ_n,\qquad x_l&\ri&x_{l+1},\eqn
or
\bq
\fo l\in\ZZ_n,\qquad x_{l+1}&\ri&x_{l},\eq
because otherwise there would exist $l\in\ZZ_n$ with two arrows exiting from $x_l$, a contradiction.
Up to reindexing $(x_l)_{l\in\ZZ_n}$ as $(x_{-l})_{l\in\ZZ_n}$, we can assume that \eqref{order} holds.
\par
Fix some $l\in\ZZ_n$. Since $p(x_l)=x_{l+1}$, we have $x_l\in N(x_{l+1})$, so $x_{l+2}=p(x_{l+1})\leq x_l$. Due to the fact that $x_l\neq x_{l+2}$ (recall that $n\geq 3$), we get
$x_{l+2}<x_l$. Starting from $x_0$ and iterating this relation (in a minimal way, $n/2$ times if $n$ is even, or $n$ times if $n$ is odd), we obtain a contradiction: $x_0<x_0$.
Thus, $\cT$ must be a tree.
\end{proof}
Let us come back to the case where $p=1/2$. The following result gives an idea of how far $p(x)$ is from $x$, for $x\in\N_+$.
\begin{lmm}\label{lem5}
Almost surely, there exist only finitely many $x\in\N_+$ such that $p(x)> 2\log_2(1+x)$.
In particular, a.s.\ there exists a (random) finite $C\geq 2$ such that
\bq
\fo x\in\N_+,\qquad p(x)&\leq & C\log_2(1+x).\eq
\end{lmm}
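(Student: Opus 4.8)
The plan is a straightforward Borel–Cantelli argument. For each $x\in\N_+$, the event $\{p(x) > t\}$ is exactly the event $A_x^{(t)}$ that $x$ has no neighbor among $\{0,1,\ldots,\lceil t\rceil\}$ (or more precisely among $\{0,1,\ldots,\lfloor t\rfloor\}$, with the usual care about floors). Since edges are present independently with probability $1/2$, $\PP(p(x) > t) = 2^{-(\lfloor t\rfloor + 1)}$ whenever $t < x$. Taking $t = 2\log_2(1+x)$, this gives $\PP(p(x) > 2\log_2(1+x)) \le 2^{-2\log_2(1+x)} = (1+x)^{-2}$, and $\sum_{x\in\N_+}(1+x)^{-2} < \infty$. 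By Borel–Cantelli, almost surely only finitely many $x$ satisfy $p(x) > 2\log_2(1+x)$, which is the first assertion.

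For the second assertion, on the almost sure event just established, let $S$ be the (finite, random) set of $x\in\N_+$ with $p(x) > 2\log_2(1+x)$. For each such $x$ we still have the trivial bound $p(x) \le x$ (this needs a word: $p(x) = \min N(x)$, and $x$ always has \emph{some} neighbor in $R$ since the graph is connected of diameter $2$; in fact under $\cB$ one has $p(x) < x$, but even without $\cB$ one only needs $p(x) \le x$, which follows because $N(x)$ is nonempty and—should $N(x)$ happen to contain no vertex below $x$—one can simply absorb the finitely many offending $x$ into the constant). Since $\log_2(1+x) \ge 1$ for $x \ge 1$, we may set
\[
C \;=\; \max\Bigl\{\,2,\ \max_{x\in S}\frac{x}{\log_2(1+x)}\,\Bigr\},
\]
a finite number because $S$ is finite. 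Then $p(x) \le C\log_2(1+x)$ for $x\in S$ by construction, and $p(x) \le 2\log_2(1+x) \le C\log_2(1+x)$ for $x\notin S$. This proves the displayed inequality for all $x\in\N_+$.

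The only genuinely delicate point is the computation $\PP(p(x) > t) = 2^{-(\lfloor t\rfloor+1)}$ and making sure the exponent is at least $2\log_2(1+x)$ up to the floor; one should write $\lfloor 2\log_2(1+x)\rfloor + 1 > 2\log_2(1+x)$, so $\PP(p(x) > 2\log_2(1+x)) < 2^{-2\log_2(1+x)} = (1+x)^{-2}$, and the sum over $x\ge 1$ converges. Everything else is bookkeeping: the constant $C$ absorbs the finitely many exceptional indices, and the bound $p(x)\le x$ handles them crudely but sufficiently. I do not expect any real obstacle here; the lemma is a routine tail estimate plus Borel–Cantelli, and is included only because the tree $\cT$ and the rate of descent of $p$ are needed for the mixing-time bounds in the sections that follow.
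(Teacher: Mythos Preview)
Your proof is correct and follows essentially the same Borel--Cantelli argument as the paper. One simplification: for the constant $C$, rather than invoking the bound $p(x)\le x$ (which is not automatic outside the event $\cB$), you can define $C=\max\bigl\{2,\ \max_{x\in S} p(x)/\log_2(1+x)\bigr\}$ directly, since $p(x)$ is a.s.\ finite for each $x$; this is what the paper does and avoids the parenthetical contortion.
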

\begin{proof}
The first assertion follows from the Borel--Cantelli lemma, as follows.
For any $x\in\N_+$, consider the event
\bq
A_x&\df& \{p(x)>2\log_2(1+x)\}.\eq
Denoting $\lfloor\cdot\rfloor$ the the integer part,
we compute 
\bq
\sum_{x\in\N_+} \PP(A_x)&=&\sum_{x\in\N_+} 
\PP(B_{\{0,x\}}=0, B_{\{1,x\}}=0, ..., B_{\{\lfloor 2\log_2(1+x)\rfloor,x\}}=0)\\
&=&\sum_{x\in\N_+} 
\f1{2^{1+\lfloor 2\log_2(1+x)\rfloor}}\\
&\leq &\sum_{x\in\N_+} \f1{(1+x)^2}\\
&<&+\iy.\eq
Having shown that a.s.\ there exists only a finite number of integers $x\in\N_+$ satisfying $p(x)> 2\log_2(1+x)$, denote these points as $x_1, ..., x_N$, with $N\in\NN$.
To get the second assertion, it is sufficient to take
\bq
C&\df&\max\biggl\{\f{p(x_l)}{\log(1+x_l)}\st l\in\lin 1,N\rin\biggr\},\eq
with the convention that $C\df2$ if $N=0$.
\end{proof}



\section{The lower bound}\label{lowerproof}
The lower bound in Theorem \ref{mainthm}, showing that order $\log_2^*i$ steps are necessary for infinitely many $i$ is proved in \cite{diaconis2021complexity} for the binary model of the Rado graph and we refer there for the proof. A different argument is needed for the $G(\infty,1/2)$ model. This section gives the details (see Theorem~\ref{xthm} below).

Let $\mu$ be the stationary distribution of our random walk on $G(\infty, 1/2)$ (with $Q(j)=2^{-(j+1)}$, as in Theorem \ref{mainthm}), given a realization of the graph. Note that $\mu$ is random. For each $x\in \mathbb{N}$, let $\tau_x$ be the mixing time of the walk starting from $x$, that is, the smallest $n$ such that the law of the walk at time $n$, starting from $x$, has total variation distance $\le 1/4$ from $\mu$.  Note that the $\tau_x$'s are also  random. 
\begin{thm}\label{xthm}
Let $\tau_x$ be as above. Then with probability one, 
\[
\limsup_{x\to\infty} \frac{\tau_x}{\log_{16}^* x} \ge 1.
\]
Equivalently, with probability one, given any $\ve >0$, $\tau_{x} \ge (1-\ve)\log_{16}^* x$ for infinitely many $x$.
\end{thm}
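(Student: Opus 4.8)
The plan is to lower-bound the total-variation distance by a single indicator and then, via a second Borel--Cantelli argument, to produce a sparse sequence of starting states from which the walk provably descends only at a logarithmic rate.

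\textbf{Reduction.} Since $\mu(y)=Z^{-1}Q(y)Q(N(y))\le Z^{-1}2^{-(y+1)}$, we have $\mu([M,\infty))\le Z^{-1}2^{-M}$, which (as $Z>0$ a.s.) can be made $<1/8$ by taking $M$ large depending on the realization. Testing against $\1_{[M,\infty)}$ gives $\|K_x^{n}-\mu\|\ge\P_x(X_n\ge M)-1/8$, so $\tau_x>n$ whenever $\P_x(X_n\ge M)\ge 1/2$. It therefore suffices to prove that, a.s., for every $\ve>0$ there are infinitely many $x$ with $\P_x(X_n\ge M)\ge 1/2$, where $n=\lceil(1-\ve)\log_{16}^*x\rceil$.

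\textbf{Descent and good vertices.} From any $y$ the walk favours small neighbours: $X_1\ge p(y)$ always, and $\P_y(X_1>p(y)+s)\le 2^{-s}$ (compare $2^{-(p(y)+s+1)}$ with $Q(N(y))\ge 2^{-(p(y)+1)}$). Call a vertex $v$ \emph{good} if $p(v)\ge K^{-1}\log_2(1+v)$ for a fixed large integer $K$. Combining the sandwich $K^{-1}\log_2(1+y)\le p(y)\le C\log_2(1+y)$ (the upper bound being Lemma~\ref{lem5}, with $C$ its random constant) with elementary $\log^*$-calculus --- multiplying by a fixed constant, or converting between $\log_2$ and $\log_{16}$, changes $\log^*$ by $O(1)$, while one application of $\log_{16}$ lowers it by $1$ --- one checks the following. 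Fix $L_k=O(\log k)$ with $\sum_k 2^{-L_k}<1/2$, and let $\cC(x)$ (the \emph{descent cone}) be the at most $\prod_{k\le n}(L_k+1)$ vertices reachable from $x$ by moving, at step $k$, from the current position $w$ into $[p(w),p(w)+L_k]$. If \emph{every} vertex of $\cC(x)$ is good, then along any such trajectory $X_k\ge (K')^{-1}\log_{16}^{(k)}(x)-O(1)$ for a constant $K'$, hence $X_n\ge M$ once $x$ is large (since $\log_{16}^{(n)}(x)$ is then a tower of height $\approx\ve\log_{16}^*x$). Moreover the walk stays inside $\cC(x)$ for $n$ steps with probability at least $\prod_k(1-2^{-L_k})\ge 1/2$. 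Thus \emph{if $\cC(x)$ is all good then $\P_x(X_n\ge M)\ge 1/2$}, i.e.\ $x$ is a ``slow start''.

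\textbf{Infinitely many slow starts.} The event ``$v$ good'' has probability $2^{-\lceil K^{-1}\log_2(1+v)\rceil}\asymp v^{-1/K}$ and depends only on the edges from $v$ to $\{0,\dots,\lfloor K^{-1}\log_2(1+v)\rfloor\}$, so these events are independent over $v\in\N_+$. Conditioning on $p(x)$ and multiplying the per-vertex probabilities down the cone --- which lives at iterated-logarithmic scales and branches only mildly --- gives $\P(x\text{ is a slow start})\ge x^{-(1+o(1))/K}$, which is summable-divergent once $K\ge 2$. The events $\{x\text{ is a slow start}\}$ are not independent across $x$, since they share the low-index part of $R$. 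I would deal with this by working on $\{C\le C_0\}$ (so $\cC(x)$ lies in a deterministic set; then let $C_0\to\infty$) and conditioning on the edges inside $[0,m]$ for a suitable $m=m(x)\asymp\log x$: given this information, ``$x$ is a slow start'' still has conditional probability $\gtrsim x^{-1/K}$, and for widely separated $x$ the conditional events have controlled correlations, so a Kochen--Stone / second-moment form of Borel--Cantelli --- in the spirit of the Kounias--Hunter--Worsley estimates already used in Section~\ref{preliminaries} --- yields infinitely many slow starts almost surely.

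\textbf{Main obstacle.} The descent estimates and $\log^*$-bookkeeping of the second paragraph are routine given Lemmas~\ref{lem1}--\ref{lem5}; the crux is the Borel--Cantelli step above, namely upgrading the first-moment bound $\sum_x\P(x\text{ is a slow start})=\infty$ to an almost-sure statement in the face of the strong positive correlation induced by the shared bottom of the random graph.
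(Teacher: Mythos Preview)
Your reduction and one-step descent estimates are fine, but the approach diverges from the paper's at the core step and leaves a genuine gap exactly where you flag it.

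The paper does \emph{not} try to show that a divergent sum of starting points $x$ are ``slow starts'' and then invoke a second-moment Borel--Cantelli. Instead it builds, almost surely, a \emph{single} infinite chain $x_0<x_1<x_2<\cdots$ with two properties: (i) $x_{i+1}$ is adjacent to $x_i$ and to no other vertex in $\{0,\dots,2x_i-1\}$, and (ii) $2^{3x_i}\le x_{i+1}\le 2^{3x_i+1}-1$. Property (i) forces the walk started at $x_i$ to step to $x_{i-1}$ with probability at least $1-2^{-x_i+1}$, so after $i$ steps it sits at $x_0$ with probability close to $1$; property (ii) gives $\log_{16}^*x_i\le i+\log_{16}^*x_0$, hence $\tau_{x_i}>i\ge \log_{16}^*x_i-\log_{16}^*x_0$. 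The existence of the chain is proved by a \emph{first}-moment Borel--Cantelli on the complements: one attempts the greedy construction from each seed $y_0\in\N$, shows that $\P(\text{construction from }y_0\text{ eventually fails})$ is summable in $y_0$, and concludes that some seed succeeds almost surely. No correlation analysis is needed anywhere.

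Your cone approach, by contrast, has two entangled sources of randomness that the sketch does not separate: the cone $\cC(x)$ is itself a random set (it is defined through the random values $p(w)$), and the ``good'' event for $w$ is precisely a statement about $p(w)$. So ``every vertex of $\cC(x)$ is good'' is not a product of independent events over a fixed index set, and the first-moment bound $\P(x\text{ is a slow start})\gtrsim x^{-1/K}$ is not justified as written. Even granting it, the Kochen--Stone step is the real obstacle: the lower levels of the cones for different $x$ live at comparable iterated-log scales and can overlap, and you have not produced the pairwise control $\P(E_x\cap E_y)\lesssim C\,\P(E_x)\P(E_y)$ (or any workable substitute) needed to run second-moment Borel--Cantelli. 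The paper's chain construction sidesteps all of this by replacing your branching cone with a single forced path and trading the divergent-sum-plus-correlations argument for a summable-complement one.
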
 

We need the following lemma.
\begin{lmm}\label{xlmm}
With probability one, there is an infinite sequence $x_0< x_1<x_2<\cdots\in \mathbb{N}$ such that: 
\begin{enumerate}
\item For each $i$, $x_{i+1}$ is connected to $x_i$ by an edge, but not connected by an edge to any other number in $\{0,1,\ldots, 2x_i-1\}$.
\item For each $i$, $2^{3x_i}\le x_{i+1} \le 2^{3x_i+1}-1$. 
\end{enumerate}
\end{lmm}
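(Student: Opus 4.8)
The natural approach is to build the sequence $(x_i)$ greedily, one step at a time, using independence of disjoint edge-sets in $G(\infty,1/2)$. Starting from any fixed $x_0$, suppose $x_i$ has been chosen. I want to find $x_{i+1}$ in the dyadic block $[2^{3x_i}, 2^{3x_i+1}-1]$ satisfying the two requirements of (1): $x_{i+1}\sim x_i$, and $x_{i+1}$ has no edge to any vertex in $\{0,1,\ldots,2x_i-1\}\setminus\{x_i\}$. For a single candidate $y$ in this block, the events ``$y\sim x_i$'' and ``$y\not\sim z$ for all $z\in\{0,\ldots,2x_i-1\}\setminus\{x_i\}$'' are determined by the $2x_i$ edges $\{\{y,z\}: 0\le z\le 2x_i-1\}$, hence have probability exactly $2^{-2x_i}$. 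Moreover, since $2^{3x_i}\le 2x_i-1$ is false but more importantly the relevant edges for distinct candidates $y,y'$ in the block touch the low vertices $\{0,\ldots,2x_i-1\}$ which are all strictly below the block, the edge-sets $\{\{y,z\}:z<2x_i\}$ for distinct $y$ in the block are disjoint; thus the ``success'' events for distinct candidates $y$ are mutually independent. (Note the digit positions $0,\ldots,2x_i-1$ of $y$ are exactly what is being constrained, and these do not interact across different $y$.) The block has $2^{3x_i}$ elements, so the probability that \emph{no} candidate works is $(1-2^{-2x_i})^{2^{3x_i}}\le \exp(-2^{x_i})$, which is summable in $i$ (indeed summable along any fixed realization of the $x_i$'s since $x_i$ is strictly increasing). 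So I would argue: conditionally on $x_0,\ldots,x_i$ (which depend only on edges among vertices below $2^{3x_{i-1}+1}$, hence on edges disjoint from those governing the block above $2^{3x_i}$), with probability $\ge 1-\exp(-2^{x_i})$ there is a valid $x_{i+1}$; take the least such. By Borel--Cantelli applied to the conditional failure probabilities, almost surely the construction never fails, and we obtain the infinite sequence.

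A cleaner way to package this and avoid fussing over the conditioning is to fix $x_0=1$ (say) and reveal the graph block by block: the edges \emph{within and below} level $2^{3x_i+1}$ used to define $x_0,\ldots,x_i$ are disjoint from the edges $\{\{y,z\}: 2^{3x_i}\le y\le 2^{3x_i+1}-1,\ 0\le z\le 2x_i-1\}$ that decide whether a good $x_{i+1}$ exists in that block — here one uses $2x_i-1 < 2^{3x_i}$, which holds for all $x_i\ge 1$, so the ``low'' endpoints $z$ never lie in the current block. Hence at each stage the conditional success probability is at least $1-\exp(-2^{x_i})\ge 1-\exp(-2^{i})$ (since $x_i\ge x_0+i\ge i$), and the events are genuinely independent across $i$ if we always use the \emph{least} valid candidate (so that $x_{i+1}$ is a measurable function of the block-$i$ edges only). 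Then $\prod_{i\ge 0}(1-\exp(-2^i))>0$, so with positive probability the whole sequence exists; a tail/ergodicity argument, or simply restarting the construction from every possible $x_0$ (the failures at different starting scales being independent in the same sense), upgrades ``positive probability'' to ``probability one''. Actually the simplest route: since $\sum_i \exp(-2^i)<\infty$, Borel--Cantelli gives that almost surely only finitely many blocks fail; discarding an initial segment and relabelling yields the desired infinite increasing sequence almost surely.

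Property (2) is automatic from the construction since we only ever search for $x_{i+1}$ inside $[2^{3x_i},2^{3x_i+1}-1]$. Property (1) is exactly the success event we are conditioning on. So there is nothing extra to check there.

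\textbf{Main obstacle.} The only real subtlety is the bookkeeping of independence: one must verify that ``$x_0,\ldots,x_i$ are determined'' uses only edges among a set of vertices disjoint from the edge-set that decides block $i+1$, and that choosing the \emph{least} valid candidate makes $x_{i+1}$ measurable with respect to a fresh, independent batch of coordinates. The inequality $2x_i - 1 < 2^{3x_i}$ (true for $x_i\ge 1$) is what guarantees the ``low vertices'' $\{0,\ldots,2x_i-1\}$ lie strictly below the current block, so that the constraint edges for the block are genuinely new. Once this is set up, the probabilistic estimate $(1-2^{-2x_i})^{2^{3x_i}}\le e^{-2^{x_i}}$ and Borel--Cantelli finish it; these are routine.
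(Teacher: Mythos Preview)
Your approach is essentially the paper's: greedy construction, the estimate $(1-2^{-2x_i})^{2^{3x_i}}\le e^{-2^{x_i}}$ for the conditional failure probability at each step, and then an upgrade from ``positive probability'' to ``almost sure''. Your identification of the independence structure (the edges deciding block $i+1$ are disjoint from those fixing $x_0,\ldots,x_i$, since $2x_i-1<2^{3x_i}$) is exactly right and is the same observation the paper uses.

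One caution on your ``simplest route'': as written it has a gap. If block $i$ fails there is no $x_{i+1}$, so ``block $i+1$'' is undefined, and you cannot speak of ``only finitely many blocks fail'' and then discard an initial segment. To make that line work you must stipulate that when a block yields no valid candidate you still set $x_{i+1}$ to (say) the least element of the block and press on; then the sequence is always defined, the conditional failure bound $e^{-2^{x_i}}\le e^{-2^{i}}$ still holds at every step, Borel--Cantelli applies, and truncating past the last failure gives the desired sequence. The paper instead takes your other suggestion: it runs the construction from every seed $y_0\in\mathbb{N}$, shows that the probabilities of failure (i.e., that the infinite sequence started from $y_0$ does not exist) are summable in $y_0$, and applies the first Borel--Cantelli lemma to the seeds---no independence across seeds is needed or claimed. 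Either route completes the argument.
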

\begin{proof}
Define a sequence $y_0,y_1, y_2,\ldots$ inductively as follows. Let $y_0$ be an arbitrary element of $\mathbb{N}$. For each $i$, let $y_{i+1}$ be the smallest element in $\{2^{3y_i}, 2^{3y_i}+1,\ldots,2^{3y_i+1}-1\}$ that has an edge to $y_i$, but to no other number in $\{0,1,\ldots,2y_i-1\}$. If there exists no such number, then the process stops. Let $A_i$ be the event that $y_{i}$ exists. Note that $A_0\supseteq A_1\supseteq A_2\supseteq \cdots$.

Let $F(x) := 2^{3x}$ and $G(x):= 2^{3x+1}-1$. Let $a_0=b_0=y_0$, and for each $i\ge 1$, let 
\[
a_i := \underbrace{F\circ F\circ \cdots \circ F}_{\text{$i$ times}} (y_0), \ \ \ b_i := \underbrace{G\circ G\circ \cdots \circ G}_{\text{$i$ times}} (y_0).
\]
Since $2^{3y_i} \le y_{i+1}\le 2^{3y_i+1}-1$ for each $i$, it follows by induction that $a_i\le y_i\le b_i$ for each $i$ (if $y_i$ exists). 

Now fix some $i\ge  1$. Since the event $A_{i-1}$ is determined by $y_1,\ldots, y_{i-1}$, and these random variables can take only finitely many values (by the above paragraph), we can write $A_{i-1}$ as a finite union of events of the form $\{y_1=c_1,\ldots, y_{i-1}=c_{i-1}\}$, where $c_1< c_2 <\cdots < c_{i-1}\in \mathbb{N}$. 

Now note that for any $c_1<\cdots < c_{i-1}$, the event $A_i\cap \{y_1=c_1,\ldots, y_{i-1}=c_{i-1}\}$ happens if and only if $\{y_1=c_1,\ldots, y_{i-1}=c_{i-1}\}$ happens and there is some $y\in \{2^{3c_{i-1}}, 2^{3c_{i-1}}+1,\ldots,2^{3c_{i-1}+1}-1\}$ that has an edge to $c_{i-1}$, but to no other number in $\{0,\ldots, 2c_{i-1}-1\}$. The event $\{y_1=c_1,\ldots, y_{i-1}=c_{i-1}\}$ is in $\mf_{c_{i-1}}$, where $\mf_x$ denotes the $\sigma$-algebra generated by the edges between all numbers in $\{0,\ldots,x\}$. On the other hand, on the event $\{y_1=c_1,\ldots, y_{i-1}=c_{i-1}\}$, it is not hard to see that
\[
\pp(A_i | \mf_{c_{i-1}}) = 1 - (1- 2^{-2c_{i-1}})^{2^{3c_{i-1}}}. 
\]
Thus,
\begin{align*}
&\pp(A_i\cap \{y_1=c_1,\ldots, y_{i-1}=c_{i-1}\}) \\
&= \pp(y_1=c_1,\ldots, y_{i-1}=c_{i-1})(1 - (1- 2^{-2c_{i-1}})^{2^{3c_{i-1}}})\\
&\ge   \pp(y_1=c_1,\ldots, y_{i-1}=c_{i-1})(1 - e^{- 2^{c_{i-1}}}),
\end{align*}
where in the last step we used the inequality $0\le 1-x\le e^{-x}$ (which holds for all $x\in [0,1]$). 
Note that the term inside the parentheses on  the right side is an increasing function of $c_{i-1}$, and the maximum possible value of $y_{i-1}$ is $b_{i-1}$. Thus, summing both sides over all values of $c_1,\ldots,c_{i-1}$ such that $\{y_1=c_1,\ldots, y_{i-1}=c_{i-1}\}\subseteq A_{i-1}$, we get
\begin{align*}
\pp(A_i) = \pp(A_i \cap A_{i-1}) &\ge \pp(A_{i-1}) (1 - e^{- 2^{b_{i-1}}}). 
\end{align*}
Proceeding inductively, this gives
\begin{align*}
\pp(A_1\cap\cdots \cap A_i) \ge \prod_{k=0}^{i-1} (1-e^{-2^{b_k}}). 
\end{align*}
Taking $i\to\infty$, we get
\begin{align*}
\pp(B)\ge  \prod_{k=0}^\infty (1-e^{-2^{b_k}}), 
\end{align*}
where 
\[
B := \bigcap_{k=1}^\infty A_k. 
\]
Now recall that the event $B$, as well as the numbers $b_0,b_1,\ldots$, are dependent on our choice of $y_0$. To emphasize this dependence, let us write them as $B(y_0)$ and $b_k(y_0)$. Then by the above inequality,
\begin{align*}
\sum_{y_0\in \mathbb{N}} \pp(B(y_0)^c) \le \sum_{y_0\in \mathbb{N}} \biggl(1-\prod_{k=0}^\infty (1-e^{-2^{b_k(y_0)}})\biggr),
\end{align*}
where $B(y_0)^c$ denotes the complement of $B(y_0)$. Due to the extremely rapid growth of $b_k(y_0)$ as $k\to\infty$, and the fact that $b_0(y_0) = y_0$, it is not hard to see that the right side is finite. Therefore, by the Borel--Cantelli lemma, $B(y_0)^c$ happens for only finitely many $y_0$ with probability one. In particular, with probability one, $B(y_0)$ happens for some $y_0$. This completes the proof.
\end{proof}
We can now prove Theorem \ref{xthm}.
\begin{proof}[Proof of Theorem \ref{xthm}]
Fix a realization of $G(\infty, 1/2)$. Let $x$ be so large that 
\[
\mu([x,\infty)) < \frac{1}{10},
\]
and 
\[
\prod_{k=1}^\infty (1-2^{-a_k(x)+1})\ge \frac{9}{10}. 
\] 
Let $x_0,x_1,x_2,\ldots$ be a sequence having the properties listed in Lemma \ref{xlmm} (which exists with probability one, by the lemma). Discarding some initial values if necessary, let us assume that $x_0 > x$. By the listed properties, it is obvious that $x_i\to \infty$ as $i\to\infty$. Thus, to prove Theorem \ref{xthm}, it suffices to prove that 
\begin{align}\label{tauxi}
\liminf_{i\to\infty} \frac{\tau_{x_i}}{\log_{16}^* x_i} \ge 1.
\end{align}
We will now deduce this from the properties of the sequence. 

Suppose that our random walk starts from $x_i$ for some $i\ge 1$. Since $x_i$ connects to $x_{i-1}$ by an  edge, but not to any other number in $\{0,\ldots, 2x_{i-1}-1\}$, we see that the probability of the walk landing up at $x_{i-1}$ in the next step is at least
\[
1 - \frac{1}{2^{-x_i}}\sum_{k= 2x_i}^\infty 2^{-k} = 1- 2^{-x_i + 1}. 
\]
Proceeding by induction, this shows that the chance that the walk lands up at $x_0$ at step $i$ is at least
\[
\prod_{k=1}^i(1- 2^{-x_k + 1}).
\]
Let $\mu_i$ be the law of walk at step $i$ (starting from $x_i$, and conditional on the fixed realization of our random graph). Then by the above deduction and the facts that $x_0>x$ and $x_k \ge a_k(x_0)\ge a_k(x)$, we have
\begin{align*}
\mu_i([x,\infty)) &\ge \prod_{k=1}^i(1- 2^{-x_k + 1})\ge \prod_{k=1}^i (1-2^{-a_k(x)+1}). 
\end{align*}
By our choice of $x$, the last expression is bounded below by $9/10$. But $\mu([x,\infty))< 1/10$. Thus, the total variation distance between $\mu_i$ and $\mu$ is at least $8/10$. In particular, $\tau_{x_i} > i$. Now, 
\begin{align*}
x_i &\le 2^{3x_{i-1} + 1}-1 \le 16^{x_{i-1}},
\end{align*}
which shows that $\log_{16}^*x_i \le \log_{16}^* x_{i-1}  + 1$. 
Proceeding inductively, we get 
\[
\log_{16}^*x_i \le i + \log_{16}^* x_0.
\]
Thus, $\tau_{x_i} > \log_{16}^* x_i - \log_{16}^* x_0$. This proves  \eqref{tauxi}. 
\end{proof}

\section{The upper bound (assuming a spectral gap)}\label{upperproof}
This section gives the upper bound for both the binary and random model of the Rado graph. Indeed, the proof works for  a somewhat general class of graphs and more general base measures $Q$. The argument assumes that we have a spectral gap estimate. These are proved below in Sections \ref{spectralgap} and \ref{spectralgap2}. We give this part of the argument first because, as with earlier sections, it gives a useful picture of the random graph.

Take any undirected graph on the nonnegative integers, with the following property. 
\begin{align}\label{assump}
\begin{rcases}
\text{There exists $C>0$ such that for any $j\ge 2$,}\\
\text{$j$ is connected to some $k\le C\log j$.}  
\end{rcases}
\end{align}
Let $\{X_n\}_{n\ge 0}$ be the Markov chain on this graph, which, starting at state $i$, jumps to a neighbor $j$ with probability proportional to $Q(j) = 2^{-(j+1)}$. The following is the main result of this section.
\begin{thm}\label{upperthm}
Let $K$ be the transition kernel of the Markov chain defined above. Suppose that $K$ has a spectral gap. Let $\mu$ be the stationary distribution of the chain, and let $a:= e^{1/C}$. Then for any $i\in \N$ and any $\ell\ge 1$, 
\begin{align*}
\|K_i^\ell - \mu\|\le C_1e^{\log_a^*i} e^{-C_2 \ell},
\end{align*}
where $C_1$ and $C_2$ are positive constants that depend only the properties of the chain (and not on $i$ or $\ell$).
\end{thm}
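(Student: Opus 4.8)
The plan is to combine two ingredients: (i) since $K$ has a spectral gap, \eqref{spectral2} shows that from any \emph{bounded} starting point the chain equilibrates geometrically fast — indeed if $S=\{0,1,\dots,M\}$ then $\mu$ is bounded below on $S$, so $\|K_j^\ell-\mu\|\le c_3 r^\ell$ uniformly over $j\in S$, with $r=1-1/A<1$; and (ii) a \emph{descent estimate}: from an arbitrary start $i$ the ball walk is pulled down to the fixed set $S$ within $O(\log_a^* i)$ steps, with an exponentially decaying tail $\P_i(\sigma_S>n)\le C'e^{\log_a^* i}e^{-\lambda n}$, where $\sigma_S$ is the first hitting time of $S$. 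Granting (i) and (ii), I would finish by conditioning at $\sigma_S$: the strong Markov property writes $K_i^\ell$ as a mixture of the $K_j^{\ell-m}$ ($j\in S$, $m=\sigma_S\le\ell$) together with the conditional law of $X_\ell$ on $\{\sigma_S>\ell\}$, whence
\[
\|K_i^\ell-\mu\|\ \le\ \sum_{m=0}^{\ell}\P_i(\sigma_S=m)\max_{j\in S}\|K_j^{\ell-m}-\mu\|\ +\ \P_i(\sigma_S>\ell)\ \le\ c_3\sum_{m=0}^{\ell}\P_i(\sigma_S=m)\,r^{\ell-m}+\P_i(\sigma_S>\ell).
\]
Splitting the sum at $m=\ell/2$ (for $m\le \ell/2$ use $r^{\ell-m}\le r^{\ell/2}$; for $m>\ell/2$ use $\P_i(\sigma_S=m)\le \P_i(\sigma_S>\ell/2)$ and $\sum r^{\ell-m}\le (1-r)^{-1}$) and inserting the tail bound from (ii) gives $\|K_i^\ell-\mu\|\le C_1 e^{\log_a^* i}e^{-C_2\ell}$ after elementary manipulations, with $C_2=\tfrac12\min(|\log r|,\lambda)$. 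So all the content is in step (ii).

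For (ii), the mechanism is that assumption \eqref{assump} keeps $Q(N(j))$ from being too small: if $j$ is joined to some $k_0\le C\log j$, then $Q(N(j))\ge 2^{-(k_0+1)}\ge \tfrac12 j^{-C\log 2}$, and therefore, for every $m$,
\[
\P_j(X_1>m)=\frac{Q\!\left(N(j)\cap(m,\infty)\right)}{Q(N(j))}\ \le\ \frac{2^{-(m+1)}}{Q(N(j))}\ \le\ 2^{-m}j^{C\log 2}.
\]
Taking $m=2C\log j$ and recalling $a=e^{1/C}$, so that $2C\log x=2\log_a x$, this gives $\P_j(X_1>2C\log j)\le j^{-\delta}$ with $\delta=C\log 2>0$. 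In words: one step of the walk replaces $j$ by (at most) $2\log_a j$ except on an event of polynomially small probability, and the probability that it jumps above any $m\gg\log j$ is even $2^{-m}$-small.

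To turn this into an exponential tail for $\sigma_S$ I would run a Lyapunov argument with a potential adapted to the dynamics: set $\psi(x)=0$ for $x\le M_0$ and $\psi(x)=1+\psi(\lceil 2C\log x\rceil)$ otherwise, where $M_0$ is large enough that $\lceil 2C\log x\rceil<x$ for $x>M_0$. A routine iterated-logarithm comparison (changing the base of $\log^*$, or inserting a bounded multiplicative/additive constant, alters $\log^*$ only by $O(1)$) gives $|\psi(x)-\log_a^*x|\le K_3$ for a universal $K_3$; the payoff of this definition is that if $X_1\le 2C\log j$ then $\psi(X_1)\le\psi(\lceil 2C\log j\rceil)=\psi(j)-1$ \emph{exactly}. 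Fixing a small $\theta\in(0,\log 2)$ and then $M\ge M_0$ large, one checks the one-step drift
\[
\E\!\left[e^{\theta\psi(X_1)}\,\middle|\,X_0=j\right]\ \le\ e^{-\gamma}\,e^{\theta\psi(j)}\qquad(j\notin S=\{0,\dots,M\})
\]
for some $\gamma>0$: the good event $\{X_1\le 2C\log j\}$ contributes $e^{-\theta}e^{\theta\psi(j)}$; the event $\{2C\log j<X_1\le j^2\}$ contributes at most $2e^{\theta K_4}j^{-\delta}e^{\theta\psi(j)}$ (using that $\psi$ is nondecreasing and $\psi(j^2)=\psi(j)+O(1)$); and $\{X_1>j^2\}$ contributes a negligible amount, since $\P_j(X_1>m)$ decays like $2^{-m}$ while $\psi$ grows more slowly than any iterated logarithm — so for $M$ large these two terms together are dominated by $\tfrac12(1-e^{-\theta})e^{\theta\psi(j)}$. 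Hence $e^{\theta\psi(X_n)}e^{\gamma n}$ is a nonnegative supermartingale up to $\sigma_S$, and optional stopping gives $\E_i e^{\gamma\sigma_S}\le e^{\theta\psi(i)}$; Markov's inequality with $\psi(i)\le\log_a^* i+K_3$ and $\theta<1$ then yields $\P_i(\sigma_S>n)\le e^{\theta K_3}e^{\log_a^* i}e^{-\gamma n}$, which is exactly the tail bound needed in (ii) (with $\lambda=\gamma$).

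The main obstacle is step (ii), and within it the drift estimate: one has to choose the potential $\psi$ so that a ``descending'' step lowers it by exactly one, verify the elementary but fiddly equivalence $\psi\asymp\log_a^*$, and carefully dominate the rare ``upward'' jumps in the exponential-moment computation. By contrast, the decomposition at $\sigma_S$ and the use of the spectral gap through \eqref{spectral2} are routine, and the constants $C_1,C_2$ end up depending only on $C$ and on the spectral gap, as claimed.
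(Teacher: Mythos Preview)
Your proposal is correct and follows essentially the same architecture as the paper: a Lyapunov/supermartingale argument yields an exponential tail $\E_i e^{\gamma\sigma_S}\le Ce^{\log_a^* i}$ for the hitting time of a bounded set, and then the strong Markov property together with the spectral-gap bound \eqref{spectral2} handles the remaining steps; the final splitting at $\ell/2$ matches the paper's choice $m=\lceil\ell/2\rceil$. The only notable difference is cosmetic: the paper takes $Z_n=\log_a^* X_n$ itself as the potential and obtains the drift in one clean stroke via the observation that, since $j$ has a neighbor $k\le\log_a j$ and $Q$ is geometric, $\P_j(X_1>k)\le\tfrac12$ --- which together with a direct bound on the $X_1>j$ tail gives $\E(e^{Z_1-Z_0}\mid X_0=j)\le(3+2e^{-1})/4<1$; your custom $\psi$ (designed so that a downward step lowers it by exactly one) works too but costs the extra verification $|\psi-\log_a^*|=O(1)$.
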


By Lemma \ref{lem5}, $G(\infty, 1/2)$ satisfies the property \eqref{assump} with probability one, for some $C$ that may depend on the realization of the graph. The Rado graph also satisfies property \eqref{assump}, with $K = 1/\log 2$. Thus, the random walk starting from $j$ mixes in time $\log_2^* j$ on the Rado graph, provided that it has a spectral gap. For $G(\infty, 1/2)$, assuming that the walk has a spectral gap, the mixing time starting from $j$ is $\log_a^* j$, where $a$ depends on the realization of the graph. The spectral gap for $G(\infty, 1/2)$ will be proved in Section \ref{spectralgap}, and the spectral gap for the Rado graph will be established in Section \ref{spectralgap2}. Therefore, this proves Theorem \ref{mainthm} and also the analogous result for $G(\infty, 1/2)$.

\begin{proof}[Proof of Theorem \ref{upperthm}]
Note that $a>1$. Let $Z_n := \log_a^* X_n$. We claim that there is some $j_0$ sufficiently large, and some positive constant $c$, such that 
\begin{align}\label{zineq}
\ee(e^{Z_{n+1}}|\mf_n) \le e^{Z_n-c} \ \ \ \text{ if } Z_n >j_0,
\end{align}
where $\mf_n$ is the $\sigma$-algebra generated by $X_0,\ldots,X_n$. (The proof is given below.) This implies that if we define the stopping time $S := \min\{n\ge 0: X_n\le j_0\}$, then $\{e^{Z_{S\wedge n} + c(S\wedge n)}\}_{n\ge 0}$ is a supermartingale with respect to the filtration $\{\mf_{n}\}_{n\ge 0}$ (see details below). Moreover, it is nonnegative. Thus, if we start from the deterministic initial condition $X_0 = j$, then for any $n$, 
\begin{align*}
\ee(e^{Z_{S\wedge n} + c (S\wedge n)}|X_0=j) \le e^{Z_{S\wedge 0} + c (S\wedge 0)} = e^{\log_a^* j}.
\end{align*}
But $Z_{S\wedge n}\ge 0$. Thus, $\ee(e^{c(S\wedge n)}|X_0=j)\le e^{\log_a^* j}$. Taking $n\to \infty$ and applying the monotone convergence theorem, we get 
\begin{align}\label{sbd}
\ee(e^{cS}|X_0=j)\le e^{\log_a^* j}.
\end{align}
Now take any $j\ge 1$ and $n\ge 1$.  Let $\mu$ be the stationary distribution, and let $\mu_{j,n}$ be the law of $X_n$ when $X_0=j$. Take any $A\subseteq \{0,1,\ldots\}$. Then for any $m\le n$,
\begin{align*}
&\mu_{j,n}(A) = \pp(X_n \in A|X_0=j) \\
&= \sum_{i=0}^m \sum_{l=0}^{j_0} \pp(X_n\in A|S=i, \, X_i = l,\, X_0=j) \pp(S=i,\, X_i=l|X_0=j) \\
&\qquad +  \pp(X_n\in A| S > m, \, X_0=j)\pp(S>m|X_0=j).
\end{align*}
But 
\begin{align*}
\pp(X_n\in A|S=i, \, X_i = l, \, X_0=j) &= \pp(X_n \in A|X_i = l) \\
&= \mu_{l,n-i}(A),
\end{align*}
and 
\begin{align*}
\mu(A) &= \sum_{i=0}^m\sum_{l=0}^{j_0}\mu(A)\pp(S=i, \, X_i=l|X_0=j)  \\
&\qquad + \mu(A)\pp(S > m|X_0=j).
\end{align*}
Thus, 
\begin{align*}
&|\mu_{j,n}(A)-\mu(A)| \\
&\le \sum_{i=0}^m\sum_{l=0}^{j_0} |\mu_{l,n-i}(A) - \mu(A)|\pp(S=i, \, X_i=l|X_0=j) +\pp(S>m |X_0=j).
\end{align*}
Now, if our Markov chain has a spectral gap, there exist constants $C_1$ and $C_2$ depending only on $j_0$ and the spectral gap, such that
\[
 |\mu_{l,n-i}(A) - \mu(A)| \le C_1 e^{-C_2(n-i)}\le C_1 e^{-C_2(n-m)}
\]
for all $0\le i\le m$ and $0\le l\le j_0$. Using this bound 
and the bound \eqref{sbd} on $\ee(e^{cS}|X_0=j)$ obtained above, we get
\begin{align*}
|\mu_{j,n}(A)-\mu(A)| &\le C_1 e^{-C_2(n-m)} + e^{\log_a^* j - cm}.
\end{align*}
Taking $m = \lceil n/2\rceil$, we get the desired result.
\end{proof}
\begin{proof}[Proof of inequality \eqref{zineq}]
It suffices to take $n=0$. Suppose that $X_0=j$ for some $j\ge 1$. By assumption, there is a neighbor $k$ of $j$ such that $k\le K\log j = \log_a j$. Assuming that $j$ is sufficiently large (depending on $K$), we have that for any $l\le k$, 
\[
\log_a^* l \le \log_a^* k \le \log_a^*(\log_a j) = \log_a^*j -1.
\]
Also, $\log_a^*l \le \log_a^* j$ for any $l\le j$. Thus,
\begin{align*}
\ee(e^{Z_1 - Z_0}|X_0=j) &\le e^{-1} \cdot \pp(X_1 \le k |X_0=j) \\
&\qquad + \pp(k< X_1\le j | X_0=j)\\
&\qquad + \sum_{l> j} e^{\log_a^* l - \log_a^*j}\pp(X_1=l| X_0=j).
\end{align*}
Now for any $l \ge k$,
\begin{align*}
\pp(X_1=l | X_0=j) &\le \frac{\pp(X_1=l | X_0=j)}{\pp(X_1 = k | X_0=j)} \\
&= \frac{Q(l)}{Q(k)} = 2^{-(l-k)}. 
\end{align*}
Thus,
\begin{align*}
\sum_{l>j} e^{\log_a^*l - \log_a^*j} \pp(X_1=l | X_0=j)&\le \sum_{l > j} e^{\log_a^*l - \log_a^*j}  2^{-(l-k)},
\end{align*}
which is less than $1/4$ if $j$ is sufficiently large (since $k\le \log_a^*j$). Next, let $L$ be the set of all $l>k$ that are connected to $j$. Then 
\begin{align*}
\pp(X_1 > k | X_0=j) &\le \frac{\pp(X_1 > k | X_0=j) }{\pp(X_1 \ge  k | X_0=j) } = \frac{\sum_{l \in L} 2^{-l}}{2^{-k}+ \sum_{l\in L} 2^{-l}}. 
\end{align*}
Since the map $x\mapsto x/(2^{-k} +x)$ is increasing, this shows that
\begin{align*}
\pp(X_1 > k | X_0=j) &\le \frac{\sum_{l > k} 2^{-l}}{2^{-k}+ \sum_{l> k} 2^{-l}} = \frac{1}{2}.
\end{align*}
Combining, we get that for sufficiently large $j$, 
\begin{align*}
\ee(e^{Z_1 - Z_0}|X_0=j) &\le e^{-1} \pp(X_1\le k|X_0=j) + \pp(X_1 > k|X_0=j) + \frac{1}{4}\\
&= e^{-1} + (1-e^{-1}) \pp(X_1 > k|X_0=j) + \frac{1}{4}\\
&\le e^{-1} + \frac{1-e^{-1}}{2} + \frac{1}{4} = \frac{3+2e^{-1}}{4} < 1.
\end{align*}
\end{proof}
\begin{proof}[Proof of the supermartingale property]
Note that
\begin{align*}
&\ee(e^{Z_{S\wedge (n+1)} + c(S\wedge (n+1))}|\mf_{n})\\
&= \sum_{i=0}^n \ee(e^{Z_{S\wedge (n+1)} + c(S\wedge (n+1))} 1_{\{S=i\}}|\mf_{n})\\
&\qquad + \ee(e^{Z_{S\wedge (n+1)} + c(S\wedge (n+1))} 1_{\{S> n\}}|\mf_{ n})\\
&= \sum_{i=0}^n \ee(e^{Z_{i} + c i} 1_{\{S=i\}}|\mf_{n})+ \ee(e^{Z_{n+1} + c(n+1)} 1_{\{S> n\}}|\mf_{ n}).
\end{align*}
Now, the events $\{S=i\}$ are $\mf_{n}$-measurable for all $0\le i\le n$, and so is the event $\{S>n\}$. Moreover, $Z_0,\ldots, Z_n$ are also $\mf_{n}$-measurable. Thus, the above expression shows that
\begin{align*}
&\ee(e^{Z_{S\wedge (n+1)} + c(S\wedge (n+1))}|\mf_{n})\\
&= 1_{\{S\le n\}} e^{Z_{S\wedge n} + c(S\wedge n)} + 1_{\{S> n\}}\ee(e^{Z_{n+1} + c(n+1)} |\mf_{ n}).
\end{align*}
But if $S>n$, then $Z_n>j_0$, and therefore by \eqref{zineq},
\[
\ee(e^{Z_{n+1} + c(n+1)} |\mf_{ n})\le e^{Z_n - c + c(n+1)} = e^{Z_n + cn}.
\]
Thus,
\begin{align*}
&\ee(e^{Z_{S\wedge (n+1)} + c(S\wedge (n+1))}|\mf_{n})\\
&\le 1_{\{S\le n\}} e^{Z_{S\wedge n} + c(S\wedge n)} + 1_{\{S> n\}}e^{Z_n + cn}\\
&= e^{Z_{S\wedge n} + c(S\wedge n)}.
\end{align*}
\end{proof}

\section{Spectral gap for the random model}\label{spectralgap}
Our next goal  is to show that the random reversible couple  $(K,\pi)$ admits a spectral gap.  The arguments make use of the ideas and notation of Section \ref{preliminaries}. In particular, recall the event $\cB = \{p(x) < x \ \ \fo x\in \N_+\}$ from Lemma \ref{lem1} and the random tree $\cT$ with edge set $F$ from Lemma \ref{lem4}. The argument uses a version of Cheeger's inequality for trees which is further developed in Appendix~\ref{a1}.
\begin{prop}\label{pro6}
On $\cB$, 
there exists a random constant $\Lambda>0$ such that
\bq
\fo f\in L^2(\pi),\qquad \Lambda\pi[(f-\pi[f])^2]&\leq &\cE(f)\eq
where in the r.h.s.\ $\cE$ is the Dirichlet form defined by
\bq
\fo f\in L^2(\pi),\qquad \cE(f)&\df&
 \f12\sum_{x,y\,\in\,\N}(f(y)-f(x))^2\, \pi(x)K(x,y).\eq
\end{prop}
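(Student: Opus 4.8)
The plan is to bound $\cE$ below by the Dirichlet form of the walk confined to the random spanning tree $\cT$ of Section~\ref{preliminaries}, prove a Poincar\'e inequality for that tree walk by a Cheeger-type argument, and transfer it back to $\pi$.

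\emph{Reduction to a tree inequality.} On $\cB$ each pair $\{x,p(x)\}$, $x\in\N_+$, is an edge of $G(\infty,1/2)$ (since $p(x)=\min N(x)\in N(x)$), so discarding the remaining terms of $\cE$,
\[
\cE(f)\ \geq\ \frac1Z\sum_{x\in\N_+}\bigl(f(x)-f(p(x))\bigr)^2Q(x)Q(p(x))\ =:\ \cE_\cT(f).
\]
Up to the factor $\tilde Z/Z$, $\cE_\cT$ is the Dirichlet form $\tilde\cE$ of the reversible chain on $\cT$ with conductance $Q(x)Q(p(x))$ on the edge $\{x,p(x)\}$; its reversible probability measure is $\tilde\pi(x)=\tilde Z^{-1}Q(x)d(x)$, where $d(x):=\sum_{y\,\sim_\cT\,x}Q(y)$ and $\tilde Z:=\sum_xQ(x)d(x)\leq 1$ (with $\tilde Z>0$ a.s.). It therefore suffices to prove (i) a Poincar\'e inequality $\tilde\Lambda\,\tilde\pi[(f-\tilde\pi[f])^2]\leq\tilde\cE(f)$ for a random $\tilde\Lambda>0$, and (ii) $\pi(x)\leq M\tilde\pi(x)$ for all $x$, with $M<\infty$ a.s.; since $\pi[(f-\pi[f])^2]\leq\pi[(f-\tilde\pi[f])^2]\leq M\tilde\pi[(f-\tilde\pi[f])^2]$ and $\tilde\cE=(Z/\tilde Z)\cE_\cT\leq(Z/\tilde Z)\cE$, the proposition then follows with $\Lambda=(\tilde Z/Z)\,\tilde\Lambda/M$.

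\emph{Cheeger bound for the tree (step (i)).} For $y\in\N_+$ let $T_y$ be the set of descendants of $y$ in $\cT$, i.e.\ the component of $y$ after deleting $\{y,p(y)\}$. On $\cB$ the parent map is strictly decreasing, so $T_y\subseteq\{y,y+1,\dots\}$ and therefore $\sum_{u\in T_y}Q(u)\leq\sum_{u\geq y}2^{-(u+1)}=2Q(y)$. Splitting $\tilde Z\tilde\pi(T_y)=\sum_{x\in T_y}Q(x)d(x)$ according to whether a tree-neighbour of $x$ lies in $T_y$ or not, one finds $\tilde Z\tilde\pi(T_y)=2\sum Q(u)Q(v)+Q(y)Q(p(y))$, the first sum over tree-edges inside $T_y$; bounding that sum by $\tfrac12(\sum_{u\in T_y}Q(u))^2\leq 2Q(y)^2$ and using $Q(y)\leq\tfrac12Q(p(y))$ (valid since $y\geq p(y)+1$ on $\cB$) yields $\tilde Z\tilde\pi(T_y)\leq 3\,Q(y)Q(p(y))$. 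Hence the conductance $Q(y)Q(p(y))$ across the cut edge is at least $\tfrac13\tilde Z\tilde\pi(T_y)$. Since every edge-cut of a tree decomposes into single-edge cuts of the form $\partial T_y$ (with a symmetric estimate handling the component containing the root), the Cheeger constant of the tree chain is bounded below by a universal positive constant, and the version of Cheeger's inequality for infinite trees in Appendix~\ref{a1} gives (i).

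\emph{Comparing $\pi$ and $\tilde\pi$ (step (ii)), and the main obstacle.} Tree-neighbours are graph-neighbours, so $d(x)\leq Q(N(x))$; conversely $N(x)\subseteq\{p(x),p(x)+1,\dots\}$ gives $Q(N(x))\leq\sum_{z\geq p(x)}2^{-(z+1)}=2Q(p(x))\leq 2d(x)$. Hence $\pi(x)/\tilde\pi(x)=(\tilde Z/Z)\,Q(N(x))/d(x)\in[\tilde Z/Z,\,2\tilde Z/Z]$, so $M:=\sup_x\pi(x)/\tilde\pi(x)\leq 2/Z<\infty$ a.s.; the reverse bound shows $L^2(\pi)\subseteq L^2(\tilde\pi)$, so the tree inequality applies to every $f\in L^2(\pi)$, and tracking constants $\Lambda=(\tilde Z/Z)\tilde\Lambda/M\geq\tilde\Lambda/2$. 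The crux of the argument is precisely this change of reference measure: run naively on $\cT$ with $\pi$ itself as the vertex weight, the scheme fails, because the resistance from $0$ to $y$ is of order $1/(Q(y)Q(p(y)))$ while $\pi(T_y)$ is of order $Q(y)/Z$, so their product is of order $1/Q(p(y))$, unbounded over $\N_+$. Passing to the tree's own reversible measure $\tilde\pi$ is what supplies the missing factor $Q(p(y))$ — via $\tilde Z\tilde\pi(T_y)\leq 3Q(y)Q(p(y))$, whose engine is the containment $T_y\subseteq[y,\infty)$ on $\cB$ — while $\tilde\pi$ stays comparable to $\pi$ thanks to $Q(N(x))\leq 2Q(p(x))$, from $N(x)\subseteq[p(x),\infty)$. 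The precise formulation of Cheeger's inequality on an infinite tree is what is deferred to Appendix~\ref{a1}.
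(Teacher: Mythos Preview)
Your argument is correct, but the paper takes a slightly more direct route. Instead of introducing the tree's own reversible probability $\tilde\pi(x)\propto Q(x)d(x)$ and proving a full Poincar\'e inequality for it, the paper anchors at the root: it bounds the $\pi$-variance by $\pi[(f-f(0))^2]$, compares this to $\mu[(f-f(0))^2]$ with $\mu(x):=Q(x)Q(p(x))$ via the one-sided estimate $Q(N(x))\le 2Q(p(x))$, and then proves a \emph{Dirichlet}-Cheeger inequality $\lambda\,\mu[(f-f(0))^2]\le\tilde\cE(f)$ on the tree (Propositions~\ref{pro7}--\ref{pro9}). The Dirichlet constant $\iota=\inf_{\emptyset\neq A\subset\N_+}\nu(\partial A)/\mu(A)$ is bounded by exactly your subtree computation, but without the need to worry about the component containing~$0$ or about two-sided comparability of $\pi$ and~$\tilde\pi$. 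Your approach buys nothing extra here, though it would be the natural one if no convenient anchor vertex were available.

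One small point: Appendix~\ref{a1} develops the \emph{Dirichlet}-Cheeger inequality (for sub-Markovian generators), not the full Markovian Cheeger inequality you invoke for step~(i). The latter is of course standard (and is what the paper cites Saloff-Coste for), but your reference is slightly off. Also, your parenthetical ``symmetric estimate handling the component containing the root'' is not quite the mechanism: since $\min(\tilde\pi(A),\tilde\pi(A^c))\le\tilde\pi(B)$ for whichever of $A,A^c$ avoids~$0$, your bound $\nu(\partial B)\ge\tfrac13\tilde\pi(B)$ already suffices --- no separate estimate on the root side is needed.
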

\par
Taking into account that  for any $f\in L^2(\pi)$, the variance $\pi[(f-\pi[f])^2]$ of $f$ with respect to $\pi$ is bounded above by $\pi[(f-f(0))^2]$, the previous result is 
an immediate consequence of the following existence of positive first Dirichlet eigenvalue under $\cB$.
\begin{prop}\label{pro7}
On $\cB$, 
there exists a random constant $\Lambda>0$ such that
\bqn{D}
\fo f\in L^2(\pi),\qquad \Lambda\pi[(f-f(0))^2]&\leq &\cE(f)\eqn
\end{prop}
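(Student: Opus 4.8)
The plan is to throw away every edge except those of the spanning tree $\cT$ and then apply a Cheeger-type inequality for trees; once that tool is in place, the whole of \eqref{D} reduces to a short isoperimetric estimate whose engine is the elementary remark that $Q(N(y))\le 2Q(p(y))$ for every $y$.

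First I would reduce the statement. Since $\cE$ is invariant under adding a constant to $f$ and $\pi[(f-f(0))^2]=\pi[g^2]$ for $g:=f-f(0)$, it is enough to produce $\Lambda>0$ with $\Lambda\pi[g^2]\le\cE(g)$ for every $g\in L^2(\pi)$ vanishing at $0$. Dropping all non-tree edges (recall $p(x)=\min N(x)\sim x$, so $F\subseteq E$) gives $\cE(g)\ge\cE_{\cT}(g):=\frac1Z\sum_{x\in\N_+}Q(x)Q(p(x))\,(g(x)-g(p(x)))^2$; moreover $\frac1Z Q(x)Q(p(x))=\pi(x)K(x,p(x))$, so $\cE_{\cT}$ is exactly the Dirichlet form of the chain with only the tree edges retained. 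Thus it suffices to prove \eqref{D} with $\cE$ replaced by $\cE_{\cT}$.

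On $\cB$ the parent map makes $\cT$ a tree rooted at $0$ (Lemma~\ref{lem4}); write $\cT_x$ for the subtree of $x$ together with its descendants, and $c(x):=\frac1Z Q(x)Q(p(x))$ for the conductance of $\{x,p(x)\}$. The version of Cheeger's inequality for trees developed in Appendix~\ref{a1} reduces the Poincar\'e inequality for $(\cE_{\cT},\pi)$ with Dirichlet condition at $0$, with an explicit $\Lambda$, to a positive lower bound on $h_{\cT}:=\inf_{x\in\N_+}c(x)/\pi(\cT_x)$; and for a tree this infimum may be tested on the subtrees $\cT_x$ alone (split an arbitrary $S\subseteq\N_+$ into its $\cT$-connected components, and for connected $S$ with $x^\ast$ the vertex of $S$ closest to $0$ use $S\subseteq\cT_{x^\ast}$ and $\{x^\ast,p(x^\ast)\}\in\partial S$). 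So everything comes down to $h_{\cT}\ge\frac13$ on $\cB$. Two observations do it: (i) on $\cB$, $p$ strictly decreases, so the tree-path from any vertex to $0$ decreases, whence $\cT_x\subseteq\{x,x+1,\dots\}$; (ii) every neighbour of $y$ is $\ge p(y)$, so $Q(N(y))=\sum_{z\in N(y)}2^{-(z+1)}\le\sum_{z\ge p(y)}2^{-(z+1)}=2Q(p(y))$. From (ii) at $y=x$, $\pi(x)\le 2c(x)$; for $y\in\cT_x\setminus\{x\}$ the parent $p(y)$ lies in $\cT_x$, hence is $\ge x$, so $Q(N(y))\le 2Q(x)=2^{-x}$ and then
\[
\pi(\cT_x\setminus\{x\})=\tfrac1Z\!\!\sum_{y\in\cT_x\setminus\{x\}}\!\!Q(y)Q(N(y))\ \le\ \tfrac1Z\,2^{-x}\!\!\sum_{y\ge x+1}\!\!2^{-(y+1)}\ =\ \tfrac1Z\,2^{-(2x+1)}.
\]
Since $p(x)\le x-1$ on $\cB$, $c(x)=\frac1Z 2^{-(x+1)}2^{-(p(x)+1)}\ge\frac1Z 2^{-(2x+1)}\ge\pi(\cT_x\setminus\{x\})$, so $\pi(\cT_x)=\pi(x)+\pi(\cT_x\setminus\{x\})\le 3c(x)$; hence $h_{\cT}\ge\frac13$. (As $h_{\cT}\ge\frac13$ is deterministic, the resulting $\Lambda$ is in fact universal, not merely random.)

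The step I expect to be the real work is the Cheeger inequality for trees borrowed from Appendix~\ref{a1}: making it precise, with an explicit and manageable constant, for an \emph{infinite} weighted tree — one in which every vertex has infinitely many children and the edge-conductances decay like $2^{-2x}$ — in the Dirichlet (boundary-at-$0$) setting. By contrast, once that is available the subtree reduction and the bound $h_{\cT}\ge\frac13$ are short, the only mildly surprising point being that keeping only the tree edges already yields a spectral gap, which hinges entirely on the cheap inequality $Q(N(y))\le 2Q(p(y))$.
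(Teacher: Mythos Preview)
Your proposal is correct and follows essentially the same route as the paper: prune to the spanning tree $\cT$, reduce the Dirichlet--Cheeger constant to the subtrees $\cT_x$ via the connected-component argument, and bound $\pi(\cT_x)$ against the edge weight $c(x)$ using $\cT_x\subseteq[x,\infty)$ and $p(y)\ge p(x)$ on $\cB$. The only cosmetic difference is that the paper first proves the Poincar\'e inequality for the auxiliary measure $\mu(x)=Q(x)Q(p(x))$ (obtaining $\iota\ge 1/2$) and then transfers to $\pi$ via $Q(N(x))\le 2Q(p(x))$, whereas you fold that comparison in from the start and work directly with $\pi$, landing on $h_{\cT}\ge 1/3$; the two arguments are interchangeable.
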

\par
The proof of Proposition \ref{pro7} is based on the pruning of $\cG$ into $\cT$ and then resorting to Cheeger's inequalities for trees. More precisely, let us introduce the following notations. Define the Markov kernel $K_{\cT}$ as 
\bq
\fo x,y\in\N,\qquad
K_{\cT}(x,y)&\df& 
\lt\{\begin{array}{ll}
K(x,y)&\hbox{if $\{x,y\}\in F$,}\\
1-\sum_{z\in \N \setminus\{ x\}}K_{\cT}(x,z)&\hbox{if $x=y$,}\\
0&\hbox{otherwise.}\end{array}\rt.\eq
Note that this kernel is reversible with respect to $\pi$.
The corresponding Dirichlet form is given by
\bq
\fo f\in L^2(\pi),\qquad \cE_{\cT}(f)&\df&
 \f12\sum_{x,y\,\in\,\N}(f(y)-f(x))^2\, \pi(x)K_{\cT}(x,y)\\
 &=& \sum_{\{x,y\}\in F}(f(y)-f(x))^2\, \pi(x)K(x,y)
 \eq
 It will be convenient to work with
 \bq
 \wi\cE&\df& Z\cE_{\cT}\eq
where $Z$ is the normalizing constant of $\pi$, as in equation \eqref{piform}. Define a nonnegative measure $\mu$ on $\N_+$ as 
\bqn{mu}
\fo x\in \N_+,\qquad \mu(x)&\df& 
Q(x)Q(p(x)).\eqn
Then we have:
\begin{prop}\label{pro8}
On $\cB$, there exists $\lambda>0$ such that
\bqn{d}
\fo f\in L^2(\mu),\qquad \lambda\mu[(f-f(0))^2]&\leq &\wi\cE(f)\eqn
\end{prop}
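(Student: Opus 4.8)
The plan is to read \eqref{d} as a lower bound on the first Dirichlet eigenvalue, with Dirichlet boundary condition at the root $0$, of the weighted tree $\cT$ of Lemma \ref{lem4}, and to obtain it from the Cheeger inequality for trees developed in Appendix~\ref{a1}. First I would unwind the definitions. Since $\pi(x)K(x,y)=Q(x)Q(y)/Z$ for $x\sim y$ by \eqref{piform}--\eqref{kform}, and $F=\{\{x,p(x)\}:x\in\N_+\}$, one has
\[
\wi\cE(f)\;=\;Z\cE_{\cT}(f)\;=\;\sum_{x\in\N_+}(f(x)-f(p(x)))^2\,\mu(x),\qquad \mu(x)=Q(x)Q(p(x))
\]
as in \eqref{mu}. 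Replacing $f$ by $|f-f(0)|$ (which leaves the left side of \eqref{d} unchanged, $\mu$ being carried by $\N_+$, and does not increase $\wi\cE$ by the reverse triangle inequality), it suffices to prove $\mu[f^2]\le\lambda^{-1}\wi\cE(f)$ for nonnegative $f$ with $f(0)=0$.

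Next I would record the elementary consequences of working on $\cB$, where $p(x)<x$ for all $x\in\N_+$, combined with the geometric decay $Q(x)=2^{-(x+1)}$. For $v\in\N_+$ let $T_v$ denote the subtree of $\cT$ consisting of $v$ and its $\cT$-descendants. Because parents strictly decrease on $\cB$, every proper descendant $x$ of $v$ satisfies $x>v$ and has parent $p(x)\ge v$, hence $\mu(x)\le Q(x)Q(v)$; summing geometric series then yields
\[
\mu(v)\ge 2^{-(2v+1)},\qquad \mu(T_v)\le \tfrac32\,\mu(v),\qquad \sum_{x:\,p(x)=v}\mu(x)\le\tfrac12\,\mu(v).
\]
The first two say the $\mu$-mass hanging below any vertex is comparable to the mass of that vertex; the third says that the ``natural'' measure $\rho(v):=\sum_{e\ni v}w(e)$, where $w(\{x,p(x)\})=\mu(x)$ is the weight of a $\cT$-edge, satisfies $\mu(v)\le\rho(v)\le\tfrac32\mu(v)$ on $\N_+$.

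From here I would establish the isoperimetric estimate for the Dirichlet problem at $0$: for every (finite or infinite) $S\subseteq\N_+$,
\[
w(\partial S)\;\ge\;\tfrac23\,\mu(S),
\]
where $\partial S$ is the set of $\cT$-edges with exactly one endpoint in $S$. One decomposes $S$ into its $\cT$-connected components $C$; each $C$ is a subtree possessing a unique vertex $t(C)$ closest to $0$, with $C\subseteq T_{t(C)}$, and its ``top'' edge $\{t(C),p(t(C))\}$ lies in $\partial S$ (as $p(t(C))\notin S$) and has weight $\mu(t(C))\ge\tfrac23\mu(C)$ by the subtree bound; distinct components give distinct top edges, so $w(\partial S)\ge\sum_C\mu(t(C))\ge\tfrac23\sum_C\mu(C)=\tfrac23\mu(S)$. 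Feeding this bound and the comparability $\mu\asymp\rho$ into the Cheeger inequality for trees of Appendix~\ref{a1} gives \eqref{d} with an explicit — and in fact universal, not merely random — constant $\lambda>0$. Alternatively one can bypass the appendix and argue by the layer-cake formula, bounding $\mu[f^2]=\int_0^\infty\mu(\{f>\sqrt t\})\,dt\le\tfrac32\int_0^\infty w(\partial\{f>\sqrt t\})\,dt=\tfrac32\sum_{x\in\N_+}\mu(x)\,\bigl|f(x)^2-f(p(x))^2\bigr|$, followed by Cauchy--Schwarz and one more use of $\sum_{x:p(x)=v}\mu(x)\le\tfrac12\mu(v)$ to control $\sum_x\mu(x)(f(x)+f(p(x)))^2$ by a constant times $\mu[f^2]$; this route gives $\lambda=4/27$ on $\cB$.

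The step needing the most care is the isoperimetric estimate — in particular, handling sets $S$ that are not themselves subtrees (hence the component decomposition) and checking that the bound persists for infinite $S$, which is necessary since super-level sets of an $L^2$ function need not be finite (the same argument works, all sums being of nonnegative terms). If one routes through Appendix~\ref{a1}, the remaining subtlety is that the Cheeger inequality there must apply to an infinite tree with a Dirichlet vertex and with a vertex measure $\mu$ only comparable to, rather than equal to, the natural edge-sum measure $\rho$; the inequality $\mu(v)\le\rho(v)\le\tfrac32\mu(v)$ is precisely what makes this harmless.
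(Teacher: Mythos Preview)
Your proof is correct and follows essentially the same route as the paper: reduce Proposition~\ref{pro8} to a Dirichlet--Cheeger estimate on the tree $\cT$, prove the isoperimetric bound by decomposing an arbitrary $S\subset\N_+$ into its $\cT$-connected components, and for each component $C$ compare $\mu(C)$ to the weight of its top edge via the subtree bound $\mu(T_v)\le c\,\mu(v)$ (the paper obtains $c=2$ and $\iota\ge 1/2$, you sharpen to $c=3/2$ by separating the root from its proper descendants). Your optional layer-cake/Cauchy--Schwarz alternative is just an in-line reproof of the Cheeger step, yielding the explicit constant $\lambda=4/27$; the paper instead cites the Cheeger inequality and gets $\lambda\ge\iota^2/2\ge 1/8$.
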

\par
This result immediately implies Proposition \ref{pro7}. Indeed, 
due on one hand to the inclusion $N(x)\subset \lin p(x),\iy\lin$ and on the 
other hand to
 the nature of $Q$, we have
\bqn{QQQ}
\fo x\in\N_+,\qquad Q(p(x))\ \leq \ Q(N(x))\ \leq \ 2Q(p(x))\eqn
Thus for any $f\in L^2(\mu)$,
\bq
\lambda\pi[(f-f(0))^2]&= &\f{\lambda }Z\sum_{x\in\N_+} (f(x)-f(0))^2 Q(x)Q(N(x))\\
&\leq &\f{2\lambda }{Z}\sum_{x\in\N_+} (f(x)-f(0))^2 Q(x)Q(p(x))\\
&=&\f{2\lambda }{Z}\mu[(f-f(0))^2]\leq \f{2 }{Z}\wi\cE(f)= 2\cE_{\cT}(f)\le2 \cE(f),\eq
and thus, Proposition \ref{pro7} holds with $\Lambda\df\lambda/2$.\par
\me
The proof of Proposition \ref{pro8} is based on a Dirichlet-variant of the Cheeger inequality (which is in fact slightly simpler than the classical one, see Appendix~\ref{a1}). For any $A\subset \N_+$, define $\pa A \df\{\{x,y\}\st x\in A, y\not\in A\}\subset \bar E$. Endow $\bar E$ with the measure $\nu$ induced by 
\bq
\fo \{x,y\}\in \bar E,\qquad \nu(\{x,y\})&\df& Z\pi(x)K_{\cT}(x,y)\\
&=&\lt\{\begin{array}{ll}Q(x)Q(y)&\hbox{if $\{x,y\}\in F$,}\\
0&\hbox{otherwise.}\end{array}\rt.
\eq
Define
the Dirichlet--Cheeger constant
\bq
\iota&\df& \inf_{A\in \cA}\f{\nu(\pa A)}{\mu(A)}\ \geq \ 0\eq
where 
\bq
\cA&\df& \{A\subset \N_+\st A\neq \emptyset\}.\eq
The proof of the traditional Markovian Cheeger's inequality given in the lectures by Saloff-Coste \cite{MR1490046} implies directly that the best constant $\lambda$ in Proposition \ref{pro8} satisfies
\bq
\lambda&\geq & \f{\iota^2}{2}\eq
Thus it remains to check:
\begin{prop}\label{pro9}
On $\cB$, we have $\iota\geq 1/2$ and in particular $\iota>0$.
\end{prop}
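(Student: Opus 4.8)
The plan is to bound the Dirichlet--Cheeger constant $\iota$ from below by controlling $\nu(\pa A)/\mu(A)$ for every nonempty $A\subset\N_+$. The key structural fact is that on $\cB$ the graph $\cT=(\N,F)$ is a tree rooted at $0$, with $p(x)<x$ the parent of each $x\in\N_+$; thus for any $A\subset\N_+$, each $x\in A$ contributes the edge $\{x,p(x)\}$ to $\pa A$ precisely when $p(x)\notin A$. First I would observe that it suffices to restrict attention to finite $A$ (if $A$ is infinite, pass to an increasing exhaustion $A_n\uparrow A$; since $\mu(A)=\lim\mu(A_n)$ and $\nu(\pa A_n)$ is controlled, a limiting argument gives the bound, or one simply notes $\mu$ is a finite measure and works directly).

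The heart of the matter is the following greedy/peeling estimate on a finite $A$: let $x^*=\max A$. Then $x^*$ has no child in $A$ other than possibly through edges $\{y,x^*\}$ with $p(y)=x^*$, but every such $y$ satisfies $y>x^*$ (since $p(y)<y$), contradicting maximality; hence $x^*$ is a leaf of the induced subforest $\cT|_A$, and its edge to $p(x^*)$ lies in $\pa A$ unless... no, $p(x^*)<x^*$ so $p(x^*)$ may or may not be in $A$ --- but the edge $\{x^*,p(x^*)\}\in F$ always exists. The cleaner route: for each $x\in A$ the edge $e_x:=\{x,p(x)\}$ satisfies $e_x\in\pa A$ iff $p(x)\notin A$, and the map $x\mapsto e_x$ is injective on $\N_+$. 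Now decompose $A$ into its connected components in $\cT$; in each component there is a unique vertex closest to the root $0$, call it $r$, and $p(r)\notin A$ (otherwise a vertex closer to the root would be in the component), so $e_r\in\pa A$. This shows $\nu(\pa A)\ge \sum_{\text{components }\cC} \nu(e_{r_\cC}) = \sum_{\cC} Q(r_\cC)Q(p(r_\cC))$. Since $Q(x)=2^{-(x+1)}$ and $p(r)<r$, we have $Q(p(r))\ge 2Q(r)$... wait, $p(r)<r$ gives $Q(p(r))=2^{-(p(r)+1)}\ge 2^{-r}=2Q(r)$, hence $\nu(e_{r_\cC})=Q(r_\cC)Q(p(r_\cC))\ge 2Q(r_\cC)^2$, but I want to compare with $\mu(A)=\sum_{x\in A}Q(x)Q(p(x))$.

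So the real comparison I would push through is, within a single component $\cC$ with root-closest vertex $r$: $\mu(\cC)=\sum_{x\in\cC}Q(x)Q(p(x))$ versus $\nu(e_r)=Q(r)Q(p(r))$. For $x\in\cC$ with $x\neq r$, the parent chain from $x$ stays in $\cC$ until it reaches $r$, and since parents are strictly smaller, $x>p(x)>\cdots$, so all vertices of $\cC$ other than $r$ are $>r$; more strongly, the vertices of $\cC$ are $r$ and distinct integers all exceeding $r$, so $\sum_{x\in\cC,\,x\neq r}Q(x)Q(p(x))\le \sum_{x\in\cC,x\ne r} Q(x)\cdot 1\le \sum_{m>r}2^{-(m+1)} = 2^{-(r+1)}=Q(r)\le 2Q(p(r))\cdot\tfrac12$; combined with $Q(x)\le Q(r+1)=\tfrac12 Q(r)$ for those $x$, one gets $\mu(\cC\setminus\{r\})$ is at most a constant multiple of $Q(r)Q(p(r))$, while $\mu(\{r\})=Q(r)Q(p(r))=\nu(e_r)$. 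Tracking the constants carefully --- using $Q(p(x))\le 1$, $\sum_{x\in\cC\setminus\{r\}}Q(x)\le Q(r)$, and $Q(r)\le 2Q(p(r))$ --- should yield $\mu(\cC)\le 2\nu(e_r)$, hence $\nu(\pa A)\ge\sum_\cC\nu(e_{r_\cC})\ge\tfrac12\sum_\cC\mu(\cC)=\tfrac12\mu(A)$, i.e. $\iota\ge 1/2$.

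The main obstacle I anticipate is getting the constant exactly $1/2$ rather than merely some positive constant: the naive bounds above give $\mu(\cC)\le 2\nu(e_r)$ only if the geometric-series tail estimates are done tightly, and one must be careful that the integers in $\cC\setminus\{r\}$, while all exceeding $r$, need not be consecutive --- but this only helps, since $\sum_{x\in\cC\setminus\{r\}}Q(x)\le\sum_{m=r+1}^\infty 2^{-(m+1)}=2^{-(r+1)}=Q(r)$ regardless. So the estimate $\mu(\cC\setminus\{r\})\le Q(r)\cdot\sup_{x>r}Q(p(x))\le Q(r)\le \nu(e_r)/Q(p(r))\cdot Q(p(r))=\nu(e_r)$ (using $Q(p(r))\ge Q(r)$, wait that needs $p(r)\le r$ which holds) actually gives $\mu(\cC)=\mu(\{r\})+\mu(\cC\setminus\{r\})\le \nu(e_r)+Q(r)$, and since $\nu(e_r)=Q(r)Q(p(r))\ge \tfrac12 Q(r)$ when $p(r)\ge$ ... hmm, this forces me to track whether $p(r)=0$ is possible (then $Q(p(r))=1/2$ and $\nu(e_r)=Q(r)/2$). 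Thus the worst case is $p(r)=0$, giving $\mu(\cC)\le \nu(e_r)+Q(r)=\nu(e_r)+2\nu(e_r)=3\nu(e_r)$ --- so I would get $\iota\ge 1/3$ this way and would need a sharper accounting (e.g. summing the contribution $Q(x)Q(p(x))$ with the actual parent values, or a smarter telescoping over the component) to reach $1/2$; that optimization of constants is where I expect to spend the real effort, and it is plausible the intended argument peels off one vertex at a time and re-roots, turning the bound into a clean induction that lands on $1/2$.
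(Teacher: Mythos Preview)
Your decomposition into $\cT$-connected components and identification of the root-closest vertex $r$ in each component $\cC$ is exactly the paper's approach. The gap is in the estimate of $\mu(\cC)$: you bound $Q(p(x))\le 1$, which is too crude and leads you to $\iota\ge 1/3$ in the worst case. The observation you are missing is that on $\cB$, every $x\in\cC$ satisfies $p(x)\ge p(r)$. Indeed, if $x\ne r$ then $p(x)$ lies on the tree path from $x$ to $r$, hence $p(x)\in\cC$; and since parents are strictly smaller on $\cB$, all vertices of $\cC$ are $\ge r>p(r)$. Therefore $Q(p(x))\le Q(p(r))$ for every $x\in\cC$, and
\[
\mu(\cC)=\sum_{x\in\cC}Q(x)Q(p(x))\le Q(p(r))\sum_{x\in\cC}Q(x)\le Q(p(r))\sum_{x\ge r}Q(x)=2Q(p(r))Q(r)=2\nu(e_r),
\]
which gives $\iota\ge 1/2$ immediately. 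No induction, peeling, or telescoping is needed; the single inequality $p(x)\ge p(r)$ replaces your $\sup_{x>r}Q(p(x))\le 1$ and closes the argument. (The paper phrases this by first enlarging $\cC$ to the full descendant subtree $T_r$ and bounding $\mu(T_r)$, but the computation is the same.)
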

\begin{proof}
Take any nonempty $A\in\cA$ and decompose it into its connected components with respect to $\cT$:
\bq
A&=&\bigsqcup_{i\in\cI} A_i\eq
where the index set $\cI$ is at most denumerable. Note that
\bq
\mu(A)=\sum_{i\in\cI} \mu(A_i), \ \ 
\nu(A)=\sum_{i\in\cI}\nu(A_i),\eq
where the second identity holds because there are no edges in $F$ connecting two different $A_i$'s. 
Thus, it follows that
\bq
\iota&=& \inf_{A\in \wi\cA}\f{\nu(\pa A)}{\mu(A)},\eq 
where $\wi\cA$ is the set of subsets of $\cA$ which are $\cT$-connected.
\par
Consider $A\in\wi\cA$, it has a smallest element $a\in\N_+$ (since $0\not\in A$).
Let $T_a$ be the subtree of descendants of $A$ in $\cT$ (i.e., the set of vertices from $\N_+$ whose non-self-intersecting path to $0$ passes through $a$).
We have 
\bq
A&\subset& T_a,\\
\pa A&\supset& \{a,p(a)\}\ =\ \pa T_a,\eq
and it follows that
\bq
\f{\nu(\pa A)}{\mu(A)}&\geq & \f{\nu(\pa T_a)}{\mu(T_a)}.\eq
We deduce  that
\bq
\iota&\ge &\inf_{a\in\N_+}\f{\nu(\pa T_a)}{\mu(T_a)} =
\inf_{a\in\N_+}\f{Q(a)Q(p(a))}{\mu(T_a)}.\eq
On $\cB$, we have for any $a\in \N_+$, on the one hand
\bqn{dec1}
\fo x\in T_a,\qquad p(x)&\geq &p(a),\eqn
and on the other hand
\bqn{dec2}
T_a&\subset& \lin a,\iy\lin.\eqn
We get
\bq
\mu(T_a)&= & \sum_{x\in T_a}Q(x)Q(p(x))\\
&\geq & Q(p(a))\sum_{x\in T_a}Q(x)\\
&\geq &Q(p(a))\sum_{x\in \lin a,\iy\lin}Q(x)\\
&=&2Q(p(a))Q(a).\eq
It follows that \bq
\iota&\geq &\f12.\eq
\end{proof}
 Lemma  \ref{lem5} can now be used to see that the ball Markov chain on the random graph has a.s.\ a spectral gap.
Indeed, we deduce from Lemma  \ref{lem5} that there exists a (random) vertex $x_0\in \N$ such that
\bq
\fo x> x_0,\qquad p(x)&<&x.\eq
Consider 
\bq
x_1&\df& \max\{p(x)\st x\in\lin 1,x_0\rin\}.\eq
It follows that for any $a>x_1$, we have
\bq
\fo x\in T_a,\qquad p(x)&<&x.\eq
(To see this, take any path $a_0,a_1,\ldots$ in $T_a$, starting at $a_0=a$, so that $p(a_i)=a_{i-1}$ for each $i$. Let $k$ be the first index such that $a_{k} \ge a_{k+1}$, assuming that there exists such a $k$. Then $a_{k+1}\le x_0$, and so $a_k = p(a_{k+1})\le x_1$. But this is impossible, since $a_0\le a_k$ and $a_0>x_1$.) 

In particular, we see that \eqref{dec1} and \eqref{dec2} hold for $a>x_1$. As a consequence, we get
\bq
\inf_{a> x_1}\f{\nu(\pa T_a)}{\mu(T_a)}&\geq & \f12\eq
By the finiteness of $\lin 1, x_1\rin$, we also have
\bq
\inf_{a\in \lin 1, x_1\rin}\f{\nu(\pa T_a)}{\mu(T_a)}&> & 0.\eq
So, finally, we have
\bq
\iota&=&\inf_{a\in\N_+}\f{\nu(\pa T_a)}{\mu(T_a)}\ >\ 0,\eq
which shows that $G(\infty, 1/2)$ has a spectral gap a.s.

\section{Spectral gap for the Rado graph}\label{spectralgap2}
This section proves the needed spectral gap for the Rado graph. Here the graph has vertex set $\N$ and an edge from $i$ to $j$ if $i$ is less than $j$ and the $i$th bit of $j$ is a one. We treat carefully the case of a more general base measure, $Q(x) = (1-\delta)\delta^x$. As delta tends to $1$, sampling from this $Q$ is a better surrogate for ``pick a neighboring vertex uniformly''. Since the normalization doesn't enter, throughout take $Q(x)=\delta^x$.  The heart of the argument is a discrete version of Hardy's inequality for trees. This is developed below with full details in Appendix \ref{a2}.

Consider the transition kernel $K$ reversible with respect to $\pi$ and associated to the measure $Q$ given by
\bq
\fo x\in\N,\qquad Q(x)&\df& \delta^x\eq
where $\delta\in(0,1)$ (instead of $\delta=1/2$ as in the introduction, up to the normalization).  Recall that 
\bq
\fo x,y\in \N,\qquad K(x,y)&\df& \f{Q(y)}{Q(N(x))}\un_{N(x)}(y)\\
\fo x\in\N,\qquad \pi(x)&=&Z^{-1}Q(x)Q(N(x))\eq
where $N(x)$ is the set of neighbors of $x$ induced by $K$ and where $Z>0$ is the normalizing constant. 

Here is the equivalent of Proposition \ref{pro8}:
\begin{prop}\label{dHardy}
We have 
\bq
 \lambda&\geq & \f{1-\delta}{16(2\vee\lceil \log_2\log_2(2/\log_2(1/\delta))\rceil)}\eq
\end{prop}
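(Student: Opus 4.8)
The plan is to mimic the structure of the proof of Proposition~\ref{pro8} for the random model, replacing the Cheeger-type argument with Hardy's inequality for trees. First I would build the tree $\cT$ underlying $K$ for the Rado graph: since the binary model is deterministic, take $p(x) = \min N(x)$, which here is just the position of the lowest set bit of $x$; this always satisfies $p(x) < x$, so the analogue of the event $\cB$ holds automatically and $\cT = (\N, \{\{x,p(x)\}: x\in\N_+\})$ is a genuine tree rooted at $0$ (Lemma~\ref{lem4} applies verbatim). As in Section~\ref{spectralgap}, I would restrict the Dirichlet form to tree edges, $\cE_\cT(f) = \sum_{\{x,y\}\in F}(f(y)-f(x))^2\pi(x)K(x,y)$, note $\cE_\cT \le \cE$, pass to $\wi\cE = Z\cE_\cT$, and introduce the measure $\mu(x) = Q(x)Q(p(x)) = \delta^{x+p(x)}$ on $\N_+$. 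Because $N(x)\subset \lin p(x),\infty\lin$, one has $Q(p(x)) \le Q(N(x)) \le Q(p(x))/(1-\delta)$ (geometric-series bound replacing the factor $2$ of \eqref{QQQ}), so a Poincaré inequality $\lambda\,\mu[(f-f(0))^2]\le\wi\cE(f)$ transfers to the desired spectral-gap bound $\Lambda \ge (1-\delta)\lambda/2$ or similar — the precise constant bookkeeping is where the $16$ and the $(1-\delta)$ prefactor will come from.

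The core step is to estimate $\lambda$, the best constant in the weighted tree Hardy inequality $\lambda\,\mu[(f-f(0))^2] \le \wi\cE(f)$, from below. Here I would invoke the tree version of Hardy's inequality (Evans--Harris--Pick, developed in Appendix~\ref{a2}): for a rooted tree with vertex weights $\mu$ and edge weights $\nu(\{x,p(x)\}) = Q(x)Q(p(x))$, the optimal constant is controlled (up to a universal factor, typically $4$ or $16$) by a supremum over vertices $a$ of a product $\mu(T_a)\cdot\big(\text{resistance of the path from $a$ to the root}\big)$, where $T_a$ is the subtree of descendants of $a$. So I would need two geometric facts about the Rado tree: (i) a bound on $\mu(T_a) = \sum_{x\in T_a}\delta^{x+p(x)}$, and (ii) a bound on the path resistance $\sum_{x\preceq a}\frac{1}{\nu(\{x,p(x)\})}$ — but actually, since consecutive values of $p$ along a root-path can only strictly decrease and are bounded by $\log_2$ of the vertex, this path has length at most $\log_2^* a$-ish, and more relevantly the depth is tiny. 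The key quantitative input is that for any $x$, $p(x) \le \log_2 x$, so the tree is extremely "flat" near the root, and $T_a \subset \lin a, \infty\lin$ gives $\mu(T_a) \le \delta^{p(a)}\sum_{x\ge a}\delta^x = \delta^{a+p(a)}/(1-\delta)$; comparing with $\nu(\pa T_a) = \delta^{a+p(a)}$ recovers an $O(1/(1-\delta))$ bound on the Cheeger-type ratio, but Hardy is needed to handle the full variance rather than just $f(0)$-centered sums along single branches, which is why the extra $\log_2\log_2(2/\log_2(1/\delta))$ factor appears — it measures the worst-case "height" of the relevant sub-chains in terms of how many times a vertex label can halve before dropping below the regime where $\delta$-weights stop the geometric decay.

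More concretely, I expect the argument to split the tree into the "core" of vertices with label below some threshold $m \asymp \log_2\log_2(2/\log_2(1/\delta))$ and the subtrees hanging off it, apply the one-dimensional Hardy constant bound of \cite{MR1709277} (or Appendix~\ref{a2}) on each root-to-leaf chain, and then glue via the tree-Hardy machinery; the threshold $m$ is chosen so that above it the $\delta^x$ weights dominate and the chain Hardy constants are uniformly bounded, while below it one absorbs a bounded number of levels at the cost of the factor $2\vee\lceil\log_2\log_2(2/\log_2(1/\delta))\rceil$. The main obstacle, and the step I would spend the most care on, is making the tree-Hardy constant explicit and checking that the relevant supremum over vertices is genuinely finite and has the claimed dependence on $\delta$ — in particular verifying that the "branching" of the Rado tree (a vertex $p^{-1}(k)$ can have infinitely many children) does not blow up the Hardy constant, which works precisely because children of $k$ have labels $\ge 2^k$ and hence $Q$-mass decaying geometrically, so each subtree contributes a controlled amount. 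Everything else — the reduction to $\cE_\cT$, the $Q(N(x))$ versus $Q(p(x))$ comparison, and the final assembly of constants — is routine manipulation of geometric sums.
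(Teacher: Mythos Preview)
Your overall framework is right---prune to the tree $\cT$ via $p(x)=$ lowest set bit, compare $Q(N(x))$ to $Q(p(x))/(1-\delta)$, and bound the tree Dirichlet constant $\lambda$ via Hardy's inequality for trees---but your description of the tree Hardy constant is the one-dimensional Muckenhoupt form, and that is where the proposal goes off the rails. For a tree with infinite branching the optimal constant is \emph{not} $\sup_a \mu(T_a)\cdot(\text{path resistance from $a$ to $0$})$; the correct object (see Appendix~\ref{a2} and \eqref{HH}) is $A=\sup_{T\in\cS}\mu(T^*)/\nu(T)$, where $T$ ranges over a class of finite subtrees and $\nu(T)$ is defined by the series--parallel recursion \eqref{recH}: $1/\nu(T)=1/\nu(m(T))+1/\sum_y\nu(T_y)$. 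Your ``supremum over vertices $a$'' would already be infinite here, since from any $a$ the path to $0$ contains edges with arbitrarily small $\nu$ once you go a couple of levels up, and in any case a Muckenhoupt-style bound cannot see the branching that makes the tree constant finite.

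What the paper actually does with this $A$ is a recursive peel-off: from \eqref{recH} one gets (inequality \eqref{up2})
\[
\frac{\mu(T^*)}{\nu(T)}\ \le\ \frac{\sum_y\mu(T_y^*)}{\mu(m(T))}\ +\ \sup_y\frac{\mu(T_y^*)}{\nu(T_y)},
\]
and the first term is at most $b(m(T))/(1-\delta)$ where $b(x)\df Q(2^x)/Q(p(x))=\delta^{2^x-p(x)}$, because the children of $m(T)$ all have label $\ge 2^{m(T)}$. Iterating down a path $z=(z_1,z_2,\dots)$ and terminating at a singleton (Lemma~\ref{prel1}) gives $\mu(T^*)/\nu(T)\le (1+B(z))/(1-\delta)$ with $B(z)=\sum_n b(z_n)$. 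The crucial quantitative lemma (Lemma~\ref{lem13}) is that along \emph{any} descending path, $B(z)\le C\df\sum_{l\ge 0}\delta^{2^{2^l}-l}$, using $p(z_n)\ge n-1$; it is the tail analysis of this doubly-exponential series (Lemma~\ref{lem16}) that produces the factor $2\vee\lceil\log_2\log_2(2/\log_2(1/\delta))\rceil$, not any threshold/core decomposition. Your proposed ``one-dimensional Hardy on each root-to-leaf chain, then glue'' is not a known technique for bounding the tree Poincar\'e constant and, absent the recursion \eqref{recH}, there is no mechanism in your sketch that prevents the infinite branching from making the constant infinite. The missing idea is precisely the recursive form of $\nu(T)$ and the one-step inequality \eqref{up2} together with the path-sum bound on $\sum_n b(z_n)$.
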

\par
This bound will be proved via Hardy's inequalities. 
If we resort to Dirichlet--Cheeger, we rather get
\bqn{Ch}
\lambda&\geq & \f{(1-\delta)^2}2\eqn
To see the advantage of Proposition \ref{dHardy},
let $\delta$ come closer and closer to $1$, namely, approach the problematic case of ``pick a neighbor uniformly at random''.
In this situation, the  r.h.s.\ of the bound of Proposition \ref{dHardy} is of order
\bq
 \f{1-\delta}{16\lceil \log_2\log_2(1/(1-\delta))\rceil}\eq
  which is better than \eqref{Ch} as $\delta$ goes to $1-$.


Here we present the Hardy's inequalities method to get Proposition \ref{dHardy} announced above. Our goal is to show that $K$ admits a positive first Dirichlet eigenvalue:
\begin{prop}\label{pro11}
There exists $\Lambda>0$ depending on $\delta\in (0,1)$ such that
\bq
\fo f\in L^2(\pi),\qquad \Lambda\pi[(f-f(0))^2]&\leq & \f12\sum_{x,y\,\in\,\N}(f(y)-f(x))^2\, \pi(x)K(x,y)\eq
\end{prop}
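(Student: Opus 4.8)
The plan is to deduce Proposition \ref{pro11} from Proposition \ref{dHardy} by exactly the pruning-and-comparison argument already used for the random model in Section \ref{spectralgap} (the passage from Proposition \ref{pro8} to Proposition \ref{pro7}), and to establish Proposition \ref{dHardy} itself by applying the tree form of Hardy's inequality developed in Appendix \ref{a2} to a suitable spanning tree of $R$. For $x\in\N_+$ set $p(x)\df\min N(x)$: in the binary model the neighbours of $x$ smaller than $x$ are exactly the positions of the $1$-bits of $x$, so $p(x)$ is the position of the least significant bit of $x$, whence $2^{p(x)}\leq x$ and in particular $p(x)<x$ for every $x\geq 1$. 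Thus, with $F\df\{\{x,p(x)\}:x\in\N_+\}$, the graph $\cT\df(\N,F)$ is a tree rooted at $0$ — the acyclicity argument of Lemma \ref{lem4} applies, and here $p(x)<x$ for all $x$ makes it immediate, with no exceptional event needed. One then introduces the kernel $K_{\cT}$, the tree Dirichlet form $\cE_{\cT}$, the auxiliary form $\wi\cE\df Z\cE_{\cT}$, the vertex measure $\mu(x)\df Q(x)Q(p(x))$ on $\N_+$, and the edge measure $\nu(\{x,p(x)\})\df Q(x)Q(p(x))$, exactly as in Section \ref{spectralgap}; in particular $\wi\cE(f)=\sum_{\{x,y\}\in F}(f(y)-f(x))^2Q(x)Q(y)$.

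Since $F$ is a subset of the edge set of $R$ we have $\cE_{\cT}(f)\leq\cE(f)$ for every $f\in L^2(\pi)$, and since $N(x)\subset\lin p(x),\iy\lin$ and $Q(y)=\delta^y$,
\[
(1-\delta)\,Q(N(x))\ \leq\ Q(p(x))\ \leq\ Q(N(x)),\qquad x\in\N_+,
\]
the first inequality because $Q(N(x))\leq\sum_{y\geq p(x)}\delta^{y}=Q(p(x))/(1-\delta)$. Granting $\lambda\,\mu[(f-f(0))^2]\leq\wi\cE(f)$ from Proposition \ref{dHardy}, the chain
\[
(1-\delta)\lambda\,\pi[(f-f(0))^2]\ \leq\ \frac{\lambda}{Z}\sum_{x\in\N_+}(f(x)-f(0))^2Q(x)Q(p(x))\ =\ \frac{\lambda}{Z}\,\mu[(f-f(0))^2]\ \leq\ \frac1Z\,\wi\cE(f)\ =\ \cE_{\cT}(f)\ \leq\ \cE(f)
\]
then gives Proposition \ref{pro11} with $\Lambda=(1-\delta)\lambda$. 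This part is routine bookkeeping.

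The substance is Proposition \ref{dHardy}, which I would obtain by feeding $(\cT,\mu,\nu)$ into the tree Hardy inequality of Appendix \ref{a2} and bounding its constant for this particular tree. The structural inputs are: (i) the children of a vertex $v$ in $\cT$ are exactly the integers $2^{v}(2m+1)$, $m\geq 0$, so $\cT$ branches infinitely but ``thins'' geometrically at every vertex; (ii) the descendant subtree $T_a$ of $a$ satisfies $T_a\subset\lin a,\iy\lin$ and $p(x)\geq a$ for every $x\in T_a\setminus\{a\}$ (because $p(x)<x$ forces the values along a branch to decrease strictly towards the root), so that $\delta^{x+p(x)}\leq\delta^{x+a}$ on $T_a\setminus\{a\}$ and hence
\[
\mu(T_a)\ =\ \sum_{x\in T_a}\delta^{x+p(x)}\ \leq\ \frac{Q(a)Q(p(a))}{1-\delta}\ =\ \frac{\nu(\{a,p(a)\})}{1-\delta},
\]
i.e.\ the $\mu$-mass of every subtree is dominated by its root-edge weight up to the factor $1/(1-\delta)$ — this Muckenhoupt-type product is what produces the $1-\delta$ in Proposition \ref{dHardy}, and it is essentially sharp, since $f=\un_{T_1}$ gives $\lambda\leq\wi\cE(\un_{T_1})/\mu(T_1)=\delta/\mu(T_1)\leq 4(1-\delta)/\delta^{2}$ (using $T_1\supset\{x\equiv 2\bmod 4\}$); and (iii) the tower identity $p(2^{(k)})=2^{(k-1)}$, which governs how the resistances $\nu(\{x,p(x)\})^{-1}=\delta^{-(x+p(x))}$ grow along branches and, with the geometric decay of $\mu$, pins down the ``depth'' of the part of $\cT$ carrying all but a negligible fraction of the $\mu$-mass. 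Since a one-dimensional (birth-and-death) Hardy bound cannot be applied directly on a branching tree, one organises $\cT$ into doubly-exponentially growing blocks; only $O\!\left(\log_2\log_2(1/(1-\delta))\right)$ of them carry appreciable $\mu$-mass (the essential $\mu$-support is $\{x\lesssim\log_{1/\delta}(1/(1-\delta))\}$, and it takes that many doubly-exponential scales to exhaust it — this is the source of the double logarithm), the per-block estimate is the $O(1/(1-\delta))$ product of (ii), and summing the block contributions (which add, not multiply) yields $\lambda\gtrsim(1-\delta)/\log_2\log_2(1/(1-\delta))$, with the explicit factor $16$ and the precise cutoff $2\vee\lceil\log_2\log_2(2/\log_2(1/\delta))\rceil$ read off from the recursion in Appendix \ref{a2}.

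The main obstacle is exactly this last step: extracting from the tree Hardy inequality an explicit, manageable constant on an infinitely-branching tree. Branching forces a genuine loss — the per-edge Muckenhoupt supremum alone does not control the Hardy constant on a tree, only on a path — so one must partition $\cT$ into scales on which a path-type (Muckenhoupt) bound is available, control how many scales carry non-negligible $\mu$-mass, and show the per-scale losses accumulate additively. Everything else — building $\cT$ and checking it is a tree, the domination $\cE_{\cT}\leq\cE$, the $Q(N(x))$-comparison, and the reduction of Proposition \ref{pro11} to Proposition \ref{dHardy} — is routine.
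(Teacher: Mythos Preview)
Your reduction from Proposition~\ref{pro11} to the tree inequality (Proposition~\ref{dHardy}, equivalently Proposition~\ref{pro12}) is exactly the paper's: same tree $\cT$, same comparison $Q(p(x))\le Q(N(x))\le Q(p(x))/(1-\delta)$, same conclusion $\Lambda=(1-\delta)\lambda$. For the Hardy bound itself your structural inputs (i)--(iii) are correct and are all used in the paper. The one place your sketch diverges from the paper is the mechanism for bounding the tree-Hardy constant $A=\sup_{T\in\SS}\mu(T^*)/\nu(T)$. You describe a partition of $\cT$ into ``doubly-exponentially growing blocks'' whose contributions sum; the paper instead works directly with the recursive definition $1/\nu(T)=1/\nu(m(T))+1/\sum_y\nu(T_y)$, which yields (inequality~\eqref{up2})
\[
\frac{\mu(T^*)}{\nu(T)}\ \le\ \frac{b(m(T))}{1-\delta}\;+\;\sup_y\frac{\mu(T_y^*)}{\nu(T_y)},\qquad b(x)\df\frac{Q(2^x)}{Q(p(x))},
\]
and then iterates this along a single path $z=(z_n)$ in $\cT$ to get $\mu(T^*)/\nu(T)\le (1+\sum_n b(z_n))/(1-\delta)$. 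The sum $\sum_n b(z_n)\le C=\sum_l\delta^{2^{2^l}-l}$ (Lemma~\ref{lem13}) is precisely where the double logarithm enters (Lemma~\ref{lem16}). So the ``additive accumulation across scales'' you anticipate is indeed what happens, but it is realised by telescoping along a path via the recursive $\nu$-formula, not by a block decomposition of $\cT$; your bound~(ii) is the base case (singleton $T$, Lemma~\ref{prel1}) of this recursion, and a preliminary finiteness step (Lemma~\ref{prel3}) is needed to justify choosing near-maximisers at each stage of the iteration.
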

\par
It follows that the reversible couple $(K,\pi)$ admits a spectral gap bounded below by $\Lambda$ given above. 
Indeed, it is an immediate consequence of the fact that for any $f\in L^2(\pi)$, the variance of $f$ with respect to $\pi$ is bounded above by $\pi[(f-f(0))^2]$.

The proof of Proposition \ref{pro11} is based on a pruning of $K$ and Hardy's inequalities for trees. Consider the set of unoriented edges induced by $K$:
\bq
E&\df&\{\{x,y\}\in \N \times\N \st  K(x,y)>0\}.\eq
(In particular, $E$ does not contain the self-edges or singletons.) For any $x\in\N_+$, let $p(x)$ the smallest bit equal to $1$ in the binary expansion of $x$, i.e.,
\bq
p(x)&\df&\min\{y\in \N\st K(x,y)>0\}.\eq
Define the subset $F$ of $E$ by
 \bq
F&\df&\{\{x,p(x)\}\in E\st x\in\N_+\}\eq
and the function $\nu$ on $F$ via
\bq
\fo \{x,p(x)\}\in F,\qquad \nu(\{x,p(x)\})&\df& Z\pi(x)K(x,p(x))\\
&=&Q(x)Q(p(x)).\eq
To any $f\in L^2(\pi)$, associate the function $(df)^2$ on $F$ given by
\bq
\fo \{x,p(x)\}\in F,\qquad (df)^2(\{x,p(x)\})&\df&(f(x)-f(p(x)))^2.\eq
Finally, consider the (non-negative) measure $\mu$ defined on $\N_+$ via
\bqn{muH}
\fo x\in \N_+,\qquad \mu(x)&\df& 
Q(x)Q(p(x)).\eqn
Then we have:
\begin{prop}\label{pro12}
There exists $\lambda>0$ depending on $\delta\in (0,1)$ such that
\bq
\fo f\in L^2(\mu),\qquad \lambda\mu[(f-f(0))^2]&\leq & \sum_{e\in F}(df)^2(e) \nu(e).\eq
\end{prop}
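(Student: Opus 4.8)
The plan is to apply the tree form of Hardy's inequality proved in Appendix~\ref{a2} to the rooted tree $\cT\df(\N,F)$, reducing Proposition~\ref{pro12} to two explicit estimates on that tree. I first record the structural features of $\cT$ that feed the estimate. Since $p(x)$ is the position of the lowest~$1$ in the binary expansion of $x$, one has $2^{p(x)}\le x$, so $p(x)\le\log_2 x<x$ for every $x\in\N_+$; hence $(\N,F)$ has no cycle (the argument of Lemma~\ref{lem4} applies verbatim, or simply note $p(x)<x$) and is joined to $0$ through the chain $x,p(x),p(p(x)),\dots$, so it is a tree rooted at $0$. Consequently every ancestor of $x$ is strictly below $x$, the subtree $T_x$ of descendants of $x$ (with $x$ itself included) satisfies $T_x\subseteq\lin x,\iy\lin$, and the children of a vertex $v\in\N$ are exactly the integers $(2m+1)2^v$, $m\ge0$. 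Finally, by \eqref{muH} and the definition of $\nu$, each edge $e_x\df\{x,p(x)\}$ carries weight $\nu(e_x)=\mu(x)=\delta^{x+p(x)}$, so its resistance is $1/\nu(e_x)=\delta^{-(x+p(x))}$.

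Once $f(0)$ is normalised to $0$ and $g\df df$, the claimed inequality says precisely that the tree Hardy operator $T$ on $L^2(\mu)$, $(Tg)(x)\df\sum_{0\neq y\preceq x}g(y)$ (the sum running over the ancestors of $x$ in $\cT$, $x$ included), is bounded, and $1/\lambda$ equals its squared norm. Appendix~\ref{a2} bounds that norm by a quantity attached to $\cT$ and built from the resistances $\delta^{-(x+p(x))}$ and the masses $\mu(T_x)$; the two elementary inputs are, using $p(y)\ge 0$ together with $T_x\subseteq\lin x,\iy\lin$,
\[
\mu(T_x)\ \le\ \sum_{y\ge x}\delta^{y}\ =\ \frac{\delta^{x}}{1-\delta}\qquad(x\in\N_+),
\]
and, since along the ancestral chain $0=v_0<v_1<\dots<v_d=x$ one has $v_i\ge 2^{v_{i-1}}$, so that the resistances $\delta^{-(v_{i-1}+v_i)}$ grow by a factor at least $\delta^{-1}$ at each step,
\[
\sum_{e\in\mathrm{path}(0,x)}\frac{1}{\nu(e)}\ \le\ \frac{\delta^{-(x+p(x))}}{1-\delta}.
\]
(Summing the first estimate over the children $(2m+1)2^v$ of a fixed vertex $v$ likewise bounds the total $\mu$-mass hanging below the children of $v$ by a convergent geometric series, which is all that is needed to treat sums over siblings.) Feeding these into the tree Hardy estimate of Appendix~\ref{a2} produces the lower bound on $\lambda$ of Proposition~\ref{dHardy}; Proposition~\ref{pro12} is exactly that statement, and it then yields Proposition~\ref{pro11} by comparing $Q(p(x))$ with $Q(N(x))$, and $\cE_{\cT}$ with the full Dirichlet form, just as in the paragraph following Proposition~\ref{pro8}.

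The feature that makes this a genuine tree inequality rather than the one-dimensional (birth--death) Hardy inequality---and the step I expect to be the main obstacle---is the branching: every vertex of $\cT$ has infinitely many children, so the naive Muckenhoupt-type condition $\sup_{x\in\N_+}\mu(T_x)\sum_{e\in\mathrm{path}(0,x)}\nu(e)^{-1}<\infty$ (which does hold here, of order $1/(1-\delta)$) is by itself not enough to bound $\lambda$ away from $0$. The tree Hardy inequality of Appendix~\ref{a2} handles this by a dyadic decomposition of $\cT$ according to the scale of the resistances: within each scale $\cT$ reduces to a controlled family of disjoint segments, on each of which the classical Hardy inequality applies with constant $4$, and the scales are then reassembled. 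The number of scales carrying appreciable $\mu$-mass is the factor $2\vee\lceil\log_2\log_2(2/\log_2(1/\delta))\rceil$; it is doubly logarithmic in $1/(1-\delta)$ because vertex labels along any root-to-leaf ray of $\cT$ grow at least tower-exponentially with the depth while $Q(x)=\delta^x$ decays only geometrically, so only that many resistance scales interact non-trivially with the mass. Once this bookkeeping is carried out with the two displayed bounds in hand, one obtains the constant of Proposition~\ref{dHardy}. The gain over the Dirichlet--Cheeger estimate \eqref{Ch}---the square in $(1-\delta)^2$ replaced by this slowly growing factor---is exactly what is wanted as $\delta\to1-$, i.e.\ in the regime where sampling a neighbour from $Q$ approaches sampling a neighbour uniformly.
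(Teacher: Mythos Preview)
Your plan is correct at the high level---apply the tree Hardy inequality of Appendix~\ref{a2} to $(\cT,\mu,\nu)$---but the way you propose to feed the estimates in does not match what that inequality actually asks for, and the missing piece is precisely the heart of the paper's argument.

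The Hardy inequality in Appendix~\ref{a2} (following \cite{MR1709277,MR1345719}) does \emph{not} proceed by a ``dyadic decomposition of resistances''. It bounds the optimal $\lambda_0$ via $1/\lambda_0\le 16A$ where $A=\sup_{T\in\cS}\mu(T^*)/\nu(T)$ and $\nu(T)$ is the \emph{recursive} functional \eqref{recH}, namely $1/\nu(T)=1/\nu(\{m(T)\})+1/\sum_y\nu(T_y)$. Neither of your two displayed inputs---the subtree mass bound and the path-resistance sum---controls this quantity directly. (Incidentally, your mass bound $\mu(T_x)\le\delta^x/(1-\delta)$, obtained from $p(y)\ge0$, is too weak even for the Muckenhoupt product you claim: it gives $\delta^{-p(x)}/(1-\delta)^2$, which is unbounded. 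One must use $p(y)\ge p(x)$ for $y\in T_x$, yielding $\mu(T_x)\le\delta^{x+p(x)}/(1-\delta)$; this is Lemma~\ref{prel1}.)

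The actual mechanism that bounds $A$ is a peeling along a worst descending path. Expanding \eqref{recH} once and applying the mass bound at the level of the children of $m(T)$ gives (see \eqref{up2} and \eqref{prel2})
\[
\frac{\mu(T^*)}{\nu(T)}\ \le\ \frac{b(m(T))}{1-\delta}\ +\ \sup_{y}\frac{\mu(T_y^*)}{\nu(T_y)},\qquad b(x)\df\frac{Q(2^x)}{Q(p(x))}=\delta^{\,2^x-p(x)}.
\]
Iterating along a path $z_1,z_2,\dots$ yields $A\le (1-\delta)^{-1}\bigl(1+\sup_z\sum_n b(z_n)\bigr)$, and the key step is Lemma~\ref{lem13}: along \emph{any} ancestral path, $\sum_n b(z_n)\le C=\sum_{l\ge0}\delta^{2^{2^l}-l}<\infty$, using $p(z_n)\ge n-1$ and $z_n\ge 2^{p(z_n)}$. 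This is where the tower growth you mention enters, but in a very specific form; your sketch never isolates the quantity $b(x)$ or this path-sum, and without it the recursion \eqref{recH} cannot be unwound. That is the genuine gap.
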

This result immediately implies Proposition \ref{pro11}. Indeed, note that by the definition of $Q$,
\bqn{QQQH}
\fo x\in\N_+,\qquad Q(p(x))\ \leq \ Q(N(x))\ \leq \ \f1{1-\delta}Q(p(x)).\eqn
Thus, for any $f\in L^2(\mu)$,
\bq
\lambda\pi[(f-f(0))^2]&= &\f{\lambda }Z\sum_{x\in\N_+} (f(x)-f(0))^2 Q(x)Q(N(x))\\
&\leq &\f{\lambda }{(1-\delta)Z}\sum_{x\in\N_+} (f(x)-f(0))^2 Q(x)Q(p(x))\\
&=&\f{\lambda }{(1-\delta)Z}\mu[(f-f(0))^2]\\
&\leq & \f1{(1-\delta)Z}\sum_{e\in F}(df)^2(e) \nu(e)\\
&\leq & \f1{2(1-\delta)}\sum_{x,y\,\in\,\N}(f(y)-f(x))^2\, \pi(x)K(x,y)\eq
namely Proposition \ref{pro11} holds with $\Lambda\df \lambda(1-\delta)$.\par
\me
Note that $\N$ endowed with the set of non-oriented edges $F$ has the structure of a tree. We interpret $0$ as its root, so that for any $x\in\N_+$, $p(x)$ is the parent of $x$.
Note that for any $x\in\N$, the children of $x$ are exactly the numbers $y2^x$, where $y$ is an odd number. We will denote $h(x)$ the height of $x$ with respect to the root $0$
(thus, the odd numbers are exactly the elements of $\N$ whose height is equal to $1$).

According to \cite{MR1709277} (see also  Evans, Harris and Pick \cite{MR1345719}), the best constant $\lambda$ in Proposition~\ref{pro12}, say $\lambda_0$, can be estimated up to a factor 16 via Hardy's inequalities for trees, see \eqref{HH} below. To describe them we need several notations.

Let $\cT$ the set of subsets $T$ of $\N_+$ satisfying the following conditions
\begin{itemize}
\item $T$ is non-empty and connected (with respect to $F$),
\item $T$ does not contain $0$,
\item there exists $M\geq 1$ such that $h(x)\leq M$ for all $x\in T$,
\item if $x\in T$ has a child in $T$, then all children of $x$ belong to $T$.
\end{itemize}
Note that any $T\in\cT$ admits a closest element to $0$, call it $m(T)$. Note that $m(T)\neq 0$.
When $T$ is not reduced to the singleton $\{m(T)\}$, then $T\setminus\{m(T)\}$ has a denumerable infinity of connected components which are indexed by the children of $m(T)$.
Since these children are exactly the $y2^{m(T)}$, where $y\in \cI$,  the set of odd numbers, call $T_{y2^{m(T)}}$ the connected component of $T\setminus\{m(T)\}$ associated to $y2^{m(T)}$.
Note that $T_{y2^{m(T)}}\in \cT$.

We extend $\nu$ as a functional on $\cT$, via the iteration
\begin{itemize}
\item when $T$ is the singleton $\{m(T)\}$, we take $\nu(T)\df \nu(\{m(T),p(m(T))\})$,
\item when $T$ is not a singleton, decompose $T$ as $\{m(T)\}\sqcup \bigsqcup_{y\in\cI}T_{y2^{m(T)}}$, then $\nu$ is defined as
\bqn{recH}
\f1{\nu(T)}&=&\f1{\nu(\{m(T)\})}+\f1{\sum_{y\in\cI} \nu(T_{y2^{m(T)}})}\eqn
\end{itemize}
For $x\in\N_+$, let $S_x$ be the set of vertices $y\in\N_+$ whose path to $0$ passes through $x$.
For any $T\in \cT$ we associate the subset 
\bq
T^*&\df& (S_{m(T)}\setminus T)\sqcup L(T)\eq
where $L(T)$ is the set of leaves of $T$, namely the $x\in T$ having no children in $T$. Equivalently, $T^*$ is the set of all descendants of the leaves of $T$, themselves included.

Consider $\cS\subset \cT$ the set of $T\in\cT$ which are such that $m(T)$ is an odd number. Finally, define
\bq
A&\df& \sup_{T\in\cS} \f{\mu(T^*)}{\nu(T)}.\eq
We are interested in this quantity because of the Hardy inequalities:  
\bqn{HH}
A\ \leq \ \f1{\lambda_0}\ \leq\ 16A,\eqn
where recall that $\lambda_0$ is the best constant in Proposition~\ref{pro12}. 
(In \cite{MR1709277}, only finite trees were considered, the extension to infinite trees is given in Appendix \ref{a2}) So, to prove Proposition \ref{pro12}, it is sufficient to show that $A$ is finite. To investigate $A$, we need some further definitions. For any $x\in \N_+$, let
\bq
b(x)&\df&\f{Q(2^x)}{Q(p(x))}.\eq
A finite path from $0$ in the direction to infinity is a finite sequence $z\df(z_n)_{n\in\lin 0, N\rin}$ of elements of $\N_+$ such that
$z_0=0$ and $p(z_n)=z_{n-1}$ for any $n\in \lin 1, N\rin$. On such a path $z$, we define the quantity
\bq
B(z)&\df&\sum_{n\in\lin 1,N\rin} b(z_n).\eq
The following technical result will be crucial for our purpose of showing that $A$ is finite.
\begin{lmm}\label{lem13}
For any  finite path from $0$ in the direction to infinity  $z\df(z_n)_{n\in\lin 0, N\rin}$, we have
\bq
B(z)&\leq & C,\eq
where
\bq C&\df& \sum_{l\in\N} {\delta^{2^{2^l}-l}}\ <\ +\iy.\eq
\end{lmm}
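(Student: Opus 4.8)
The plan is to understand the path $z=(z_n)_{n\in\lin 0,N\rin}$ from $0$ toward infinity explicitly in terms of binary expansions, and then to bound $B(z)=\sum_{n=1}^N b(z_n)$ by relating the summands to a fixed convergent series. Recall $b(x) = Q(2^x)/Q(p(x)) = \delta^{2^x - p(x)}$ when $Q(x)=\delta^x$. The crucial geometric fact about this tree is that $p(z_n)=z_{n-1}$ means $z_{n-1}$ is the position of the lowest set bit of $z_n$; equivalently, $z_n = z_{n-1} + (\text{some number with all bits} > z_{n-1})$, i.e. $z_n \ge 2^{z_{n-1}}$ when $z_{n-1}\ge 1$. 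So along the path the values grow at least like iterated exponentials: $z_n \ge 2^{z_{n-1}}$ for $n\ge 2$, and $z_1$ is an odd number $\ge 1$. This means the heights $h(z_n)=n$ correspond to values $z_n$ that are at least a tower of $2$'s of height roughly $n$.

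\textbf{Bounding each term.} First I would observe that since $p(z_n) = z_{n-1}$ and $z_{n-1} \ge 0$, we always have $b(z_n) = \delta^{2^{z_n} - z_{n-1}} \le \delta^{2^{z_n} - z_n}$ (using $z_{n-1} < z_n$, and in fact $z_{n-1}$ is much smaller). Actually the cleaner route: since $z_{n-1} = p(z_n)$ is a bit position of $z_n$, we have $z_{n-1} \le \log_2 z_n$, so $2^{z_n} - z_{n-1} \ge 2^{z_n} - \log_2 z_n$. The real gain comes from the tower growth. Let me index by the lower bound: set $t_1 = 1$ and $t_{k+1} = 2^{t_k}$, so $t_k$ is the tower of height $k$ (with $t_1=1,\ t_2=2,\ t_3=4,\ t_4=16,\dots$), and one checks $t_{k} \ge 2^{2^{\cdots}}$; in the statement the series is $\sum_l \delta^{2^{2^l}-l}$, matching $z_n \gtrsim 2^{2^{n-2}}$ for the bulk of the path. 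I would prove by induction that $z_n \ge 2^{2^{n-2}}$ for $n \ge 2$ (base: $z_2 \ge 2^{z_1} \ge 2^1 = 2 = 2^{2^0}$; step: $z_{n+1} \ge 2^{z_n} \ge 2^{2^{2^{n-2}}} \ge 2^{2^{n-1}}$). Then $b(z_n) = \delta^{2^{z_n}-z_{n-1}} \le \delta^{2^{2^{2^{n-2}}} - z_{n-1}}$; since $z_{n-1} \le z_n$ is vastly smaller than $2^{z_n}$, one crudely bounds $2^{z_n} - z_{n-1} \ge 2^{2^{n-2}} - (n-2)$ (here one needs $2^{z_n}-z_{n-1} \ge 2^{2^{n-2}} - (n-2)$, which holds since $2^{z_n} \ge 2^{2^{2^{n-2}}} \gg 2^{2^{n-2}}$ while $z_{n-1}$, being a tower, is at most $2^{2^{n-3}} \le 2^{2^{n-2}}$ — any slack this size works). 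Reindexing $l = n-2$, this gives $b(z_n) \le \delta^{2^{2^l} - l}$.

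\textbf{Summing and convergence.} Summing over $n$ from $1$ to $N$ and handling the first term or two ($n=1$, and possibly $n=2$) separately as an absolute constant, I get $B(z) \le \sum_{l\ge 0} \delta^{2^{2^l}-l} =: C$, which is exactly the claimed bound, and is independent of $N$ and of the particular path. Finiteness of $C$ is immediate: $2^{2^l}$ grows doubly exponentially while $l$ grows linearly, so $2^{2^l} - l \to \infty$ and in fact $\delta^{2^{2^l}-l}$ decays faster than geometrically, so the series converges (compare with $\sum \delta^{l}$ for large $l$, say, once $2^{2^l}-l \ge l$).

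\textbf{Main obstacle.} I expect the only delicate point is making the tower lower bound on $z_n$ clean and getting the reindexing to land exactly on the exponents $2^{2^l}-l$ in the statement rather than on some cosmetically different tower; the inequalities $p(x)\le \log_2 x$ and $z_n \ge 2^{z_{n-1}}$ do all the real work, but one must track the off-by-one in heights carefully (the odd number $z_1$ at height $1$, the factor from $p(z_n)=z_{n-1}$ in the exponent) so that the crude bound $2^{z_n} - z_{n-1} \ge 2^{2^l} - l$ is actually valid for every $l\ge 0$ and not just eventually — any genuinely borderline small cases ($l=0,1$) should just be absorbed into $C$ by the same formula since $C$ is defined as the full sum starting at $l=0$.
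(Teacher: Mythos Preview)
Your overall strategy is correct and essentially the paper's: exploit $z_n \ge 2^{z_{n-1}}$ (lowest-bit parent) and the growth along the path to bound $b(z_n) = \delta^{2^{z_n} - z_{n-1}}$ term by term against the series defining $C$. But there is a genuine error in your justification of the key inequality.

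You claim $2^{z_n} - z_{n-1} \ge 2^{2^{n-2}} - (n-2)$ because ``$2^{z_n} \ge 2^{2^{2^{n-2}}}$ while $z_{n-1}$, being a tower, is at most $2^{2^{n-3}}$''. Your tower induction proved the \emph{lower} bound $z_{n-1} \ge 2^{2^{n-3}}$, not an upper bound; and there is no upper bound on $z_{n-1}$ in terms of $n$ alone, since $z_1$ may be any odd number and hence arbitrarily large. So you cannot control $2^{z_n} - z_{n-1}$ by bounding the two pieces separately.

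The correct route (and the paper's, which bypasses the tower induction entirely) keeps the two terms coupled: from $z_n \ge 2^{z_{n-1}}$ one has $2^{z_n} - z_{n-1} \ge 2^{2^{z_{n-1}}} - z_{n-1}$, and the function $t \mapsto 2^{2^t} - t$ is increasing on $[0,\infty)$. Now observe that $h(x) \le x$ for every vertex $x$ (immediate induction on height, since $p(x) < x$); applied to $z_{n-1}$, whose height is $n-1$, this gives $z_{n-1} \ge n-1$. Hence $2^{z_n} - z_{n-1} \ge 2^{2^{n-1}} - (n-1)$ and $b(z_n) \le \delta^{2^{2^{n-1}}-(n-1)}$. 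Summing over $n\in\{1,\dots,N\}$ and reindexing $l = n-1$ yields $B(z) \le \sum_{l\ge0}\delta^{2^{2^l}-l} = C$ directly, with no separate handling of small $n$ and no off-by-one ambiguity.
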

\begin{proof}
Note that for any
$n\in\lin 1,N\rin$, 
 $h(z_n)=n$.
Furthermore, for any
$x\in\N_+$, we have $h(x)\leq x$ and we get
$h(p(z_n))=h(z_n)-1= n-1$, so that $p(z_n)\geq n-1$.
Writing $z_n=y_n2^{p(z_n)}$, for some odd number $y_n$,
it follows that
\bq
b(z_n)&=&\f{Q(2^{y_n2^{p(z_n)}})}{Q(p(z_n))}\\
&=&{\delta^{2^{y_n2^{p(z_n)}}-p(z_n)}}\\
&\leq &{\delta^{2^{2^{p(z_n)}}-p(z_n)}}\\
&\leq &{\delta^{2^{2^{n-1}}-n-1}}.\eq
The desired result follows at once.
\end{proof}

We need two ingredients about ratios  $\mu(T^*)/\nu(T)$.
Here is the first one.
\begin{lmm}\label{prel1}
For any $T\in\cT$ which is a singleton, we have 
\[
\frac{\mu(T^*)}{\nu(T)}\leq \frac{1}{1-\delta}.
\]
\end{lmm}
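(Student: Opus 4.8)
The plan is to make $T^{*}$ and $\nu(T)$ completely explicit and then bound $\mu(T^{*})$ by a single geometric series. Write $m := m(T)\in\N_+$, so that $T=\{m\}$. First I would observe that, $T$ being a singleton, its unique vertex $m$ has no children in $T$, hence $L(T)=\{m\}$, and therefore $T^{*}=(S_m\setminus\{m\})\sqcup\{m\}=S_m$, the full subtree of descendants of $m$ in the tree $(\N,F)$, with $m$ itself included. On the other side, by the definition of the extension of $\nu$, $\nu(T)=\nu(\{m,p(m)\})=Q(m)Q(p(m))$. So the statement to prove is precisely $\mu(S_m)\le Q(m)Q(p(m))/(1-\delta)$.

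Next I would split $\mu(S_m)=\sum_{x\in S_m}Q(x)Q(p(x))$ into the term $x=m$ and the rest. The $x=m$ term is exactly $Q(m)Q(p(m))=\nu(T)$. For the remaining sum over $x\in S_m\setminus\{m\}$, the one structural fact I need is that each such $x$ is a \emph{strict} descendant of $m$, so the strictly decreasing path $x>p(x)>p(p(x))>\cdots>0$ passes through $m$; consequently $p(x)\ge m$ and $x\ge m+1$. This gives $Q(p(x))=\delta^{p(x)}\le\delta^{m}=Q(m)$, hence
\[
\sum_{x\in S_m\setminus\{m\}}Q(x)Q(p(x))\ \le\ Q(m)\sum_{x\ge m+1}\delta^{x}\ =\ \frac{\delta}{1-\delta}\,\delta^{m}Q(m)\ =\ \frac{\delta}{1-\delta}\,Q(m)^{2}.
\]
Dividing by $\nu(T)=Q(m)Q(p(m))$ then gives
\[
\frac{\mu(T^{*})}{\nu(T)}\ \le\ 1+\frac{\delta}{1-\delta}\cdot\frac{Q(m)}{Q(p(m))}\ \le\ 1+\frac{\delta}{1-\delta}\ =\ \frac{1}{1-\delta},
\]
the last step using $Q(m)\le Q(p(m))$, valid since $k\mapsto Q(k)=\delta^{k}$ is non-increasing and $p(m)<m$.

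I do not expect a real obstacle here: this lemma is the easy base case of the estimate on $\mu(T^{*})/\nu(T)$. The only points requiring a little care are the identification $T^{*}=S_m$ for singleton $T$, the fact that the $x=m$ term reproduces $\nu(T)$ exactly (so the ratio is ``$1$ plus a small tail''), and the inequality $p(x)\ge m$ for every descendant $x$ of $m$, which is where the tree structure enters.
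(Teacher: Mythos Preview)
Your proof is correct and follows essentially the same approach as the paper: identify $T^{*}=S_m$, compute $\nu(T)=Q(m)Q(p(m))$, use the tree structure to bound $Q(p(x))$ from above, and sum a geometric series. The only cosmetic difference is that the paper does not split off the $x=m$ term but instead uses the cruder bound $p(y)\ge p(m)$ for \emph{all} $y\in T^{*}$, obtaining $\mu(T^{*})\le Q(p(m))\sum_{y\ge m}\delta^{y}=Q(p(m))Q(m)/(1-\delta)$ in one stroke; your version separates the exact term and uses the sharper $p(x)\ge m$ on the tail, but both routes land on the same bound.
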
 
\begin{proof}
When $T$ is the singleton $\{m(T)\}$,
 on the one hand we have 
 \[
 \nu(T)=\nu(\{p(m(T)),m(T)\})=\mu(m(T)).
 \]
On the other hand, $T^*$ is the subtree growing from $m(T)$, namely the subtree containing all the descendants of $m(T)$.
Note two properties of $T^*$:
\bqn{Tst1}
T^*&\subset&\{y\in\N_+\st y\geq m(T)\},\\
\label{Tst2}\fo y\in T^*,\qquad p(y)&\geq &p(m(T)),\eqn
and we further have $p(y)\geq m(T)$ for any $y\in T^*\setminus\{m(T)\}$.
It follows that 
\bqn{ub}
\nonumber\mu(T^*)
\nonumber&=&\sum_{y\in T^*} Q(y)Q(p(y))\\
\nonumber&\leq & Q(p(m(T)))\sum_{y\geq m(T)} Q(y)\\
\nonumber&=&Q(p(m(T)))\sum_{y\geq m(T)} \delta^{y}\\
&= & Q(p(m(T))) \f{Q(m(T))}{1-\delta}\\
\nonumber&=&\f1{1-\delta}\mu(m(T)).\eqn
Thus, we get
\bq
\f{\mu(T^*)}{\nu(T)}&\leq &\f1{1-\delta}.\eq
\end{proof}
For the second ingredient, we need some further definitions.
The length $\ell(T)$ of $T\in\cT$ is given by
\bq
\ell(T)&\df& \max_{x\in T}h(x)-\min_{x\in T}h(x),\eq
and for any $l\in\N$, we define
\bq
\cT_l&\df&\{T\in\cT\st \ell(T)\leq l\}\eq
\par
\begin{lmm}\label{prel3}
For any $l\in\N$, we have
\bq
\sup_{T\in\cT_l}\f{\mu(T^*)}{\nu(T)}&<&+\iy.\eq
\end{lmm}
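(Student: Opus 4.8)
The plan is to argue by induction on $l$, exploiting the recursive structure of $\cT$, of the functional $\nu$, and of the operation $T\mapsto T^*$, with the singleton estimate of Lemma~\ref{prel1} serving both as the base case and as the engine of the inductive step. Write $A_l:=\sup_{T\in\cT_l}\mu(T^*)/\nu(T)$. For the base case $l=0$, I would first observe that $\cT_0$ consists only of singletons: a subset of $\N_+$ that is $F$-connected and has $\ell(T)=0$ must be a single vertex, since any $F$-edge joins two vertices whose heights differ by exactly one. Hence $A_0\le 1/(1-\delta)$ by Lemma~\ref{prel1}.

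For the inductive step, fix $l\ge 1$ and assume $A_{l-1}<\infty$. Let $T\in\cT_l$. If $T$ is a singleton we are done by Lemma~\ref{prel1}, so assume not, set $m=m(T)$, and use the canonical decomposition $T=\{m\}\sqcup\bigsqcup_{y\in\cI}T_{y2^m}$ over the odd numbers $y\in\cI$. (All children of $m$ lie in $T$, because $T$ is non-singleton and $F$-connected, so $m$ has a child in $T$, and then the closure-under-children property of $\cT$ puts every child of $m$ in $T$; thus the component $T_{y2^m}$ is non-empty for every odd $y$.) The first point to check is that each $T_{y2^m}$ lies in $\cT_{l-1}$: its root $y2^m$ sits at height $h(m)+1$, while every $x\in T_{y2^m}$ satisfies $h(x)\le\max_{x'\in T}h(x')\le h(m)+\ell(T)\le h(m)+l$, so $\ell(T_{y2^m})\le l-1$.

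Next I would record the two additivity facts underlying the recursion. Set-theoretically, $T^*=\bigsqcup_{y\in\cI}(T_{y2^m})^*$: indeed $T^*=(S_m\setminus T)\cup L(T)$, the non-singleton hypothesis makes $m$ a non-leaf of $T$, and one checks $L(T)=\bigsqcup_y L(T_{y2^m})$ and $S_m\setminus T=\bigsqcup_y(S_{y2^m}\setminus T_{y2^m})$; hence $\mu(T^*)=\sum_y\mu((T_{y2^m})^*)$. On the $\nu$ side, the defining iteration gives $1/\nu(T)=1/\mu(m)+1/\Sigma$ with $\Sigma:=\sum_y\nu(T_{y2^m})$ and $\nu(\{m\})=\mu(m)$. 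Combining the two, $\mu(T^*)/\nu(T)=\mu(T^*)/\mu(m)+\bigl(\sum_y\mu((T_{y2^m})^*)\bigr)/\bigl(\sum_y\nu(T_{y2^m})\bigr)$. The second summand is $\le\sup_y\mu((T_{y2^m})^*)/\nu(T_{y2^m})\le A_{l-1}$ by the mediant inequality together with $T_{y2^m}\in\cT_{l-1}$; the first summand is $\le 1/(1-\delta)$, since $T^*\subseteq S_m$ and, applying Lemma~\ref{prel1} to the singleton $\{m\}$ (whose star is exactly $S_m$), $\mu(T^*)\le\mu(S_m)\le\mu(m)/(1-\delta)$. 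Therefore $A_l\le 1/(1-\delta)+A_{l-1}$, and by induction $A_l\le(l+1)/(1-\delta)<\infty$.

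The computations here are all short; the only parts requiring genuine care are the combinatorial bookkeeping — verifying $T^*=\bigsqcup_y(T_{y2^m})^*$ and $T_{y2^m}\in\cT_{l-1}$ directly from the defining properties of $\cT$ (in particular that the closure-under-children condition forces $m$ to be a non-leaf of a non-singleton $T$, and that passing to a child's component lowers $\ell$ by exactly one) — and checking that the series $\Sigma=\sum_y\nu(T_{y2^m})$ and $\sum_y\mu((T_{y2^m})^*)$ converge, so that the mediant inequality and the identity $\mu(T^*)=\sum_y\mu((T_{y2^m})^*)$ are legitimate; convergence follows from the geometric decay of $Q(x)=\delta^x$ together with the uniform bound $\mu(S_x)\le\mu(x)/(1-\delta)$ extracted from the proof of Lemma~\ref{prel1}.
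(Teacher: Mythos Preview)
Your proof is correct and follows essentially the same inductive skeleton as the paper: induction on $l$, the decomposition $T=\{m\}\sqcup\bigsqcup_{y\in\cI}T_{y2^m}$, the identity $T^*=\bigsqcup_y(T_{y2^m})^*$, and the mediant bound on the second summand.

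The one noteworthy difference is your treatment of the first summand $\mu(T^*)/\mu(m)$. You bound it by $1/(1-\delta)$ via the crude inclusion $T^*\subseteq S_m$ together with Lemma~\ref{prel1}. The paper instead uses the sharper inclusion $\bigsqcup_y(T_{y2^m})^*\subseteq\{x\ge 2^{m}\}$ with $p(x)\ge m$, obtaining the bound $b(m)/(1-\delta)$ rather than $1/(1-\delta)$. For the purpose of Lemma~\ref{prel3} your coarser bound is cleaner and perfectly sufficient, yielding the tidy recursion $A_l\le A_{l-1}+1/(1-\delta)$. The paper's sharper estimate is not needed here but is exactly what drives the subsequent proof of Proposition~\ref{pro12}, where the terms $b(z_n)$ must be summed along a path and controlled by the constant $C$ of Lemma~\ref{lem13}; with your bound that argument would give only $N/(1-\delta)$ with $N$ the path length, which is unbounded over $\cS$. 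So both arguments prove the lemma, but the paper's version is doing double duty.
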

\begin{proof}
We will prove the finiteness by induction over $l\in\N$. First, note that $\cT_0$ is the set of singletons, and so Lemma \ref{prel1} implies that
\bq
\sup_{T\in\cT_0}\f{\mu(T^*)}{\nu(T)}&\leq &\f1{1-\delta}.\eq
Next, assume that the supremum is finite for some $l\in\N$ and let us show that it is also finite for $l+1$.

Consider $T\in\cT_{l+1}$, with $\ell(T)=l+1$; in particular, $T$ is not a singleton.
Decompose $T$ as $\{m(T)\}\sqcup \bigsqcup_{y\in\cI}T_{y2^{m(T)}}$ and recall the relation \eqref{recH}. Since 
\bq
T^*&=&\bigsqcup_{y\in\cI}T_{y2^{m(T)}}^*,
\eq
it follows that
\bqn{up2}
\nonumber\f{\mu(T^*)}{\nu(T)}&=&\sum_{y\in \cI}\mu(T_{y2^{m(T)}}^*)\lt(\f1{\nu(\{m(T)\})}+\f1{\sum_{y\in\cI} \nu(T_{y2^{m(T)}})}\rt)\\
\nonumber&=&\f{\sum_{y\in \cI}\mu(T_{y2^{m(T)}}^*)}{\nu(\{m(T)\})}+\f{\sum_{y\in \cI}\mu(T_{y2^{m(T)}}^*)}{\sum_{y\in\cI} \nu(T_{y2^{m(T)}})}\\
&\leq &\f{\mu(\sqcup_{y\in \cI}T_{y2^{m(T)}}^*)}{\mu(m(T))}+\sup\lt\{\f{\mu(T_{y2^{m(T)}}^*)}{\nu(T_{y2^{m(T)}})}\st y\in\cI\rt\}.
\eqn
Consider the first term on the right.  Given $y\in\cI$, the smallest possible element of $T_{y2^{m(T)}}^*$ is $y2^{m(T)}$, and we have for any $x\in T_{y2^{m(T)}}^*$,
\[
p(x)\geq p(y2^{m(T)})=m(T).
\]
Thus we have the equivalents of \eqref{Tst1} and \eqref{Tst2}:
\bqn{Tst3}
\nonumber\bigsqcup_{y\in \cI}T_{y2^{m(T)}}^*&\subset&\{y\in\N_+\st y\geq 2^{m(T)}\},\\
\fo x\in \bigsqcup_{y\in \cI}T_{y2^{m(T)}}^*,\qquad p(x)&\geq &m(T).\eqn
Following the computation \eqref{ub}, we get 
\bq
\mu\lt(\bigsqcup_{y\in \cI}T_{y2^{m(T)}}^*\rt)
&< &\f1{1-\delta}Q(m(T))Q(2^{m(T)}),
\eq
where  the  inequality is strict, because in \eqref{Tst3} we cannot have equality for all $x\in \bigsqcup_{y\in \cI}T_{y2^{m(T)}}^*$. It follows that
\bqn{prel2}
\nonumber\f{\sum_{y\in \cI}\mu(T_{y2^{m(T)}}^*)}{\mu(m(T))}&< & \f1{1-\delta}\f{Q(m(T)) Q(2^{m(T)})}{Q(m(T))Q(p(m(T)))}\\
&=& \f1{1-\delta}b(m(T))\\
\nonumber&\leq &  \f1{1-\delta}C\eqn
where $C$ is the constant introduced in Lemma \ref{lem13}. Since for any $y\in\cI$, we have $T_{y2^{m(T)}}\in\cT_l$, we deduce the desired result from the induction hypothesis.
\end{proof}
We are now ready to prove Proposition \ref{pro12}.
\begin{proof}[Proof of Proposition \ref{pro12}]
Fix some $T\in\cS$,
we are going to show that \bq
\f{\mu(T^*)}{\nu(T)}&\leq &\f{1+C}{1-\delta}\eq
where $C$ is the constant introduced in Lemma \ref{lem13}. Due to Lemma \ref{prel1}, this bound is clear if $T$ is a singleton. When $T$ is not the singleton $\{m(T)\}$, decompose $T$ as $\{m(T)\}\sqcup \bigsqcup_{y\in\cI}T_{y2^{m(T)}}$ and let us come back to \eqref{up2}.
Denote $z_1\df m(T)$ and
\bq
\epsilon &\df &\f{b(z_1)}{1-\delta}-\f{\sum_{y\in \cI}\mu(T_{y2^{m(T)}}^*)}{\mu(m(T))}\eq
which is positive according to \eqref{prel2}. Coming back to \eqref{up2}, we have shown 
\bq
\f{\mu(T^*)}{\nu(T)}&\leq & \f{b(z_1)}{1-\delta}+\f{\mu(T^*_{z_2})}{\nu(T_{z_2})}\eq
where $z_2\in\{y2^{m(T)}\st y\in\cI\}$ is such that
\bq
\sup\lt\{\f{\mu(T_{y2^{m(T)}}^*)}{\nu(T_{y2^{m(T)}})}\st y\in\cI\rt\}\leq \f{\mu(T^*_{z_2})}{\nu(T_{z_2})}+\epsilon.\eq
To get the existence of $z_2$, we used that the supremum is finite, as ensured by Lemma \ref{prel3}.

By iterating this procedure, define a 
finite path from $0$ in the direction to infinity  $z\df(z_n)_{n\in\lin 0, N\rin}$, such that
for any $n\in\lin 1, N-1\rin$,
\bq
\f{\mu(T^*_{z_n})}{\nu(T_{z_n})}&\leq & \f{b(z_n)}{1-\delta}+\f{\mu(T^*_{z_{n+1}})}{\nu(T_{z_{n+1}})}
\eq
and $T_{z_N}$ is a singleton. We have $N\leq \max\{h(x)\st x\in T\}$. We deduce that
\bq
\f{\mu(T^*)}{\nu(T)}&\leq &\f{B(z)}{1-\delta}+\f{\mu(T^*_{z_N})}{\nu(T_{z_N})}\\
&\leq &\f{C+1}{1-\delta},\eq
as desired.
\end{proof}
\par
To get a more explicit bound in terms of $\delta$, it remains to investigate the quantity $C$.
\begin{lmm}\label{lem16}
We have
\bq
C&\leq &\lt\{\begin{array}{ll}
2&\hbox{if $\delta\in (0,1/\sqrt{2}]$,}\\
1+\lt\lceil\log_2\log_2\lt(\f2{\log_2(1/\delta)}\rt)\rt\rceil&\hbox{if $\delta\in (1/\sqrt{2},1)$.}
\end{array}\rt.\eq
\end{lmm}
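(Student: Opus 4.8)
I would first normalise by introducing $u:=\log_2(1/\delta)\in(0,\infty)$, so that the quantity to be bounded becomes
\[
C=\sum_{l\in\N}\delta^{2^{2^l}-l}=\sum_{l\in\N}2^{-u(2^{2^l}-l)}.
\]
The workhorse will be the crude but robust inequality $2^{2^l}\ge 2(l+1)$, equivalently $2^{2^l}-l\ge l+2$, valid for every $l\in\N$ by a one-line induction (squaring the previous term). From here the two ranges of $\delta$ are handled separately.

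\textbf{The regime $\delta\in(0,1/\sqrt2]$ (i.e.\ $u\ge\tfrac12$).} Here I would simply bound term by term: since $\delta<1$ and $2^{2^l}-l\ge l+2$,
\[
C\le\sum_{l\in\N}\delta^{l+2}=\frac{\delta^2}{1-\delta}.
\]
A short check shows $x\mapsto x^2/(1-x)$ is increasing on $(0,1)$, so on $(0,1/\sqrt2]$ it attains its maximum at $\delta=1/\sqrt2$, where it equals $\frac{1/2}{1-1/\sqrt2}=1+\tfrac1{\sqrt2}<2$. This gives the first case.

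\textbf{The regime $\delta\in(1/\sqrt2,1)$ (i.e.\ $u\in(0,\tfrac12)$).} Now $2/u>4$, so $L:=\lceil\log_2\log_2(2/u)\rceil\ge2$ is well defined, and I would split $C$ into the head $\sum_{l=0}^{L-1}$ and the tail $\sum_{l\ge L}$. Each of the $L$ head terms is $<1$ because $u(2^{2^l}-l)>0$, so the head contributes $<L$. For the tail, the defining property of $L$ gives $u\,2^{2^L}\ge2$; writing $l=L+k$ and using $2^{2^{L+k}}=(2^{2^L})^{2^k}\ge(2/u)^{2^k}$ together with $1/u>2$, one checks $u\,2^{2^{L+k}}\ge2$ for $k=0$ and $u\,2^{2^{L+k}}\ge2^{2^{k+1}-1}$ for $k\ge1$. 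Combining this with $2^{2^m}-m\ge2^{2^m}/2$ turns it into $u(2^{2^{L+k}}-(L+k))\ge1$ for $k=0$ and $\ge2^{2^{k+1}-2}$ for $k\ge1$, so the tail is at most $2^{-1}+\sum_{k\ge1}2^{-2^{2^{k+1}-2}}=2^{-1}+2^{-4}+2^{-64}+\cdots<1$. Hence $C<L+1$, i.e.\ $C\le 1+\lceil\log_2\log_2(2/u)\rceil$, which is exactly the second case.

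\textbf{Main obstacle.} Everything reduces to elementary estimation, and the only delicate point is the calibration of the threshold $L$ in the second regime: it must be large enough that $u\,2^{2^L}\ge2$ (so that the tail decays doubly exponentially from $l=L$ on) yet small enough that the $L$ head terms cost only an additive $L$. Once $L=\lceil\log_2\log_2(2/u)\rceil$ is seen to do both, verifying that the tail really sums to less than $1$ is routine bookkeeping with the exponents $2^{2^l}-l$.
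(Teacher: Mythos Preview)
Your argument is correct. Both your proof and the paper's use the same head/tail decomposition: bound the first few terms trivially by $1$ each, and show the remaining terms sum to less than $1$ by exploiting the doubly exponential growth of $2^{2^l}$. The bookkeeping differs in two respects worth noting.

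First, for $\delta\le 1/\sqrt2$ you bypass the head/tail split entirely via $2^{2^l}-l\ge l+2$ and a geometric series, obtaining $C\le \delta^2/(1-\delta)\le 1+1/\sqrt2<2$; this is cleaner than the paper, which runs the general machinery and only specializes at the very end. Second, for $\delta>1/\sqrt2$ you define the cut $L=\lceil\log_2\log_2(2/u)\rceil$ \emph{directly} as the target bound and then verify the tail is small, whereas the paper defines the cut $l_0$ \emph{implicitly} as the first index where $\delta^{2^{2^l}-l}\le 1/2$ and afterwards shows $l_0$ is at most the target. Correspondingly, the paper controls the tail via the exponent-doubling inequality $2^{2^{l+1}}-(l+1)\ge 2(2^{2^l}-l)$, while you control it via $2^{2^m}-m\ge 2^{2^m}/2$ combined with $u\,2^{2^{L+k}}\ge (2/u)^{2^k-1}\cdot 2$. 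Both routes are elementary; yours is slightly more explicit in that the split point is the answer itself, while the paper's has the virtue of a uniform argument across all $\delta$.
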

\begin{proof}
Consider
\bq
l_0&\df& \min(l\in\N_+\st \delta^{2^{2^l}-l}\leq 1/2).\eq
Elementary computations enable to see that
\bq
\fo l\geq 1,\qquad 
2^{2^{l+1}}-l-1\geq 2(2^{2^l}-l),\eq
so we get
\bq
\sum_{l\geq l_0}\delta^{2^{2^l}-l}&\leq &\sum_{n\geq 0}\f1{2^{2^n}}\\
&\leq & \sum_{n\geq 1}\f1{2^{n}}\\
&=&1.\eq
Since we have
\bq
\fo l\in\NN,\qquad 2^{2^l}-l&\geq& 0\eq
we deduce
\bq
C&\leq &1+\sum_{l\in\lin 0, l_0-1\rin} \delta^{2^{2^l}-l}\\
&\leq & 1+l_0.\eq
It is not difficult to check that 
\bq
\fo l\geq 1,\qquad 2^{2^l}-l&\geq & \f12 2^{2^l}\eq
so that
\bq
l_0&=& \min\{l\in\N_+\st 2^{2^l}-l\geq 1/\log_2(1/\delta)\}\\
&\leq &\min\{l\in\N_+\st 2^{2^l}\geq 2/\log_2(1/\delta)\}\\
&=&1\vee\lceil \log_2\log_2(2/\log_2(1/\delta))\rceil.
\eq
The announced result follows from the fact
\bq
\log_2\log_2(2/\log_2(1/\delta))\geq 1 &\Leftrightarrow& \delta\geq \f1{\sqrt{2}}.\eq
\end{proof}

The following observations show that $Q$ needs to be at least decaying exponentially for the Hardy inequality approach to work. 
\begin{remark}\label{Rem17}
\par
a) In view of the expression of $\pi$, it is natural to try to replace \eqref{muH} by
\bq 
\fo x\in \N_+,\qquad \mu(x)&\df& 
 Z\pi(x)\\&=&
Q(x)Q(N(x)).\eq
But then in Lemma \ref{prel1}, where we want the ratios $\mu(T^*)/\nu(T)$ to be bounded above for singletons $T$, we end up with the fact that
\[
\frac{Q(N(m(T)))}{Q(p(m(T)))}=\frac{\mu(T)}{\nu(T)}\leq \frac{\mu(T^*)}{\nu(T)}
\]
must be bounded above for singletons $T$. Namely an extension of \eqref{QQQH} must hold: there exists a constant $c>0$ such that
\bqn{choi}
\fo x\in\N_+,\qquad Q(N(x))&\leq & c Q(p(x)).\eqn
Writing $x=y 2^p$, with $y\in\cI$ and $p\in\N$, we must have
\bq
Q(N(y2^p))&\leq & c Q(p).\eq
Take $y=1+2+4+\cdots +2^l$, then we get that
$p, p+1, ..., p+l$ all belong to $Q(N(y2^p))$, so that
\bq
Q(\{p, p+1, ..., p+l\})&\leq & c Q(p),\eq
and letting $l$ go to infinity, it follows that
\bq
Q(\lin p, \iy\lin)&\leq & c Q(p),\eq
namely, $Q$ has exponential tails.

b) Other subtrees of the graph generated by $K$ could have been considered. It amounts to choose the parent of any $x\in\N_+$.
But among all possible choices of such a neighbor, the one with most weight is $p(x)$, at least if $Q$ is decreasing.
In view of the requirement \eqref{choi}, it looks like the best possible choice.\par
c) If one is only interested in Proposition \ref{pro12} with $\mu$ defined by \eqref{muH}, then many more probability measures $Q$ can be considered, in particular any
polynomial probability of the form
\bq
\fo x\in\N,\qquad Q(x)&\df& \f1{\zeta(l)(x+1)^l}\eq
where $\zeta$ is the Riemann function and $l>1$.
\end{remark}


\appendix

\section{Dirichlet--Cheeger inequalities}\label{a1}

We begin by showing the Dirichlet--Cheeger inequality that  we have been using in the previous sections. 
It is a direct extension (even simplification) of the proof of the Cheeger inequality given in Saloff-Coste \cite{MR1490046}.
We end this appendix by proving that it is in general not possible to compare linearly the Dirichlet--Cheeger constant of an absorbed Markov chain with the largest Dirichlet--Cheeger constant induced on a spanning subtree.

Let us work in continuous time. Consider $L$ a sub-Markovian generator on a finite set $V$. Namely, $L\df (L(x,y))_{x,y\in V}$, whose off-diagonal entries are non-negative and whose row sums are non-positive.
Assume that $L$ is irreducible and reversible with respect to a probability $\pi$ on $V$.
\par
Let $\lambda(L)$ be the smallest eigenvalue of $-L$ (often called the Dirichlet eigenvalue). The variational formula for eigenvalues shows that
\bqn{lambda}
\lambda(L)&=&\min_{f\in \RR^V\setminus\{0\}}\f{-\pi[fL[f]]}{\pi[f^2]}.\eqn
The Dirichlet--Cheeger constant $\iota(L)$ is defined similarly, except that only indicator functions are considered in the minimum:
\bqn{iota}
\iota(L)&=&\min_{A\subset V,\, A\neq\emptyset}\f{-\pi[\un_AL[\un_A]]}{\pi[A]}.\eqn
Here is the Dirichlet--Cheeger inequality:
\begin{thm}\label{theo18}
Assuming $L\neq 0$, we have
\bq
\f{\iota(L)^2}{2\ell(L)}\ \leq \ \lambda(L)\ \leq \ \iota(L)\eq
where $\ell(L)\df \max\{\vert L(x,x)\vert\st x\in V\}>0$. 
\end{thm}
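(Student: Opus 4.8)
The plan is to prove the two inequalities separately. The right-hand bound $\lambda(L)\le\iota(L)$ is immediate: in the variational formula \eqref{lambda} the infimum runs over all nonzero $f\in\RR^V$, while in \eqref{iota} it is restricted to indicators $\un_A$; since $\pi[\un_A^2]=\pi[A]$, feeding $f=\un_A$ into \eqref{lambda} gives $\lambda(L)\le -\pi[\un_AL[\un_A]]/\pi[A]$ for every nonempty $A$, and minimising over $A$ yields $\lambda(L)\le\iota(L)$. The left-hand bound is the genuine Cheeger estimate, which I would obtain by a co-area argument followed by a single Cauchy--Schwarz.

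First I would record the ``Dirichlet form with explicit killing'' identity. Writing $\kappa(x)\df-\sum_y L(x,y)\ge0$ for the killing rate at $x$, reversibility of $L$ with respect to $\pi$ gives, for every $f\in\RR^V$,
\[
\cE(f)\ \df\ -\pi[fL[f]]\ =\ \frac12\sum_{x\ne y}\pi(x)L(x,y)(f(x)-f(y))^2+\sum_x\pi(x)\kappa(x)f(x)^2,
\]
and the analogous computation for $\un_A$ shows that the definition of $\iota(L)$ says exactly that $\iota(L)\,\pi(A)\le\sum_{x\in A}\pi(x)\sum_{y\notin A}L(x,y)+\sum_{x\in A}\pi(x)\kappa(x)$ for every nonempty $A\subset V$. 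Next, let $f$ be the principal Dirichlet eigenfunction of $-L$; since $-L$ is irreducible with nonpositive off-diagonal entries, $c\,\Id+L$ is a nonnegative irreducible matrix for $c$ large, so by Perron--Frobenius one may take $f>0$, and then $\lambda(L)=\cE(f)/\pi[f^2]$. The heart of the proof is the co-area (layer-cake) step: apply the Cheeger inequality above to the superlevel sets $A_t\df\{x:f(x)^2>t\}$ and integrate over $t\in(0,\infty)$. Using $\int_0^\infty\un\{f(x)^2>t\}\,dt=f(x)^2$ for the volume and killing terms, and $\int_0^\infty\un\{f(x)^2>t\ge f(y)^2\}\,dt=(f(x)^2-f(y)^2)_+$ together with the symmetry $\pi(x)L(x,y)=\pi(y)L(y,x)$ for the flow term, this gives
\[
\iota(L)\,\pi[f^2]\ \le\ \frac12\sum_{x\ne y}\pi(x)L(x,y)\,\big|f(x)^2-f(y)^2\big|+\sum_x\pi(x)\kappa(x)f(x)^2.
\]

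To conclude I would apply Cauchy--Schwarz after factoring $|f(x)^2-f(y)^2|=|f(x)-f(y)|\,(f(x)+f(y))$ (where $f\ge0$ is used), treating the killing term as a jump to an auxiliary cemetery vertex $\partial$ with $f(\partial)\df0$ carrying weight $\pi(x)\kappa(x)$ on the edge $\{x,\partial\}$. Cauchy--Schwarz over this enlarged edge set bounds the right-hand side above by $\cE(f)^{1/2}$ times $\big(\frac12\sum_{x\ne y}\pi(x)L(x,y)(f(x)+f(y))^2+\sum_x\pi(x)\kappa(x)f(x)^2\big)^{1/2}$; expanding $(f(x)+f(y))^2\le2(f(x)^2+f(y)^2)$, symmetrising by reversibility, and using $2\sum_{y\ne x}L(x,y)+\kappa(x)=-2L(x,x)-\kappa(x)\le2\ell(L)$ shows this second factor is at most $(2\ell(L)\,\pi[f^2])^{1/2}$. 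Hence $\iota(L)\,\pi[f^2]\le(2\ell(L)\,\pi[f^2]\,\cE(f))^{1/2}$; squaring and dividing by $\pi[f^2]$ yields $\iota(L)^2\le2\ell(L)\,\lambda(L)$, i.e.\ $\iota(L)^2/(2\ell(L))\le\lambda(L)$ (and $\ell(L)>0$ since $L\ne0$ is irreducible and reversible, so some diagonal entry is strictly negative).

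The step I expect to be the main obstacle is the bookkeeping of the killing/absorption term: splitting the right-hand side of the co-area inequality into a ``flow'' part and a ``killing'' part and applying Cauchy--Schwarz to each separately loses a constant factor, so both must be handled in a single Cauchy--Schwarz. The device that keeps the constant equal to $2$ is precisely the cemetery vertex — viewing the absorbing boundary as a jump to an added point where $f$ vanishes — after which the estimate is the textbook one.
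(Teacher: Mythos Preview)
Your proof is correct and follows essentially the same route as the paper: extend $f$ by $0$ at an added cemetery point, apply a co-area/layer-cake argument to get an $L^1$ Cheeger bound, then use the factorisation $|f(x)^2-f(y)^2|=|f(x)-f(y)|\,|f(x)+f(y)|$ together with a single Cauchy--Schwarz over the enlarged edge set and the crude bound $(a+b)^2\le 2(a^2+b^2)$. The only organisational difference is that the paper sets up the cemetery point $0$ and the edge measure $\mu$ on $\bar E$ from the start, packages the co-area step as a separate $L^1$ characterisation of $\iota(L)$ (Proposition~\ref{pro19}), and then applies it to $f=g^2$ for \emph{arbitrary} $g\in\RR^V$; this avoids your appeal to Perron--Frobenius for a positive eigenfunction, since $g^2\ge 0$ automatically.
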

\par
When $L$ is Markovian, the above inequalities are trivial and reduce to $\iota(L)=\lambda(L)=0$. Indeed, it is sufficient to consider $f=\un$ and $A=V$ respectively in the r.h.s.\ of \eqref{lambda} and \eqref{iota}.
Thus there is no harm in supposing furthermore that $L$ is strictly sub-Markovian: at least one of the row sums is negative.
To bring this situation back to a Markovian setting, it is usual to extend $V$ into $\bar V\df V\sqcup\{0\}$ where $0\not\in V$ is a new point.
Then one introduces the extended Markov generator $\bar L$ on $\bar V$ via
\bq
\fo x, y\in \bar V,\qquad \bar L(x,y)&\df& \lt\{\begin{array}{ll}
L(x,y)&\hbox{if $x,y\in V$,}\\
-\sum_{z\in V}L(x,z)&\hbox{if $y=0$,}\\
0&\hbox{otherwise.}
\end{array}\rt.\eq
Note that the point $0$ is absorbing for the Markov processes associated to $\bar L$.

It is convenient to give another expression for $\iota(L)$.
Consider the set of edges 
\bq
\bar E&\df& \{\{x,y\}\st x\neq y \in \bar V\}.\eq
We define a measure $\mu$ on $\bar E$:
\bq
\fo e\df\{x,y\}\in \bar E,\qquad \mu(e)&\df& \lt\{\begin{array}{ll}
\pi(x)L(x,y)&\hbox{if $x,y\in V$,}\\
\pi(x) \bar L(x,0)&\hbox{if $y=0$,}
\\
\pi(y) \bar L(y,0)&\hbox{if $x=0$.}
\end{array}\rt.\eq
(Note that the reversibility assumption was used to ensure that the first line is well-defined.) Extend any $f\in\RR^V$ into the function $\bar f$ on $\bar V$ by making it vanish at 0 and define
\bq
\fo e\df\{x,y\}\in\bar E,\qquad \vert d\bar f\vert(e)&\df&
\vert \bar f(y)-\bar f(x)\vert.
\eq
With these definitions we can check that
\bq
\fo f\in\RR^V,\qquad -\pi[fL[f]]&=&\sum_{e\in\bar E}\vert d\bar f\vert^2(e) \mu(e).\eq
These notations enable to see \eqref{iota} as a $L^1$ version of \eqref{lambda}:
\begin{prop}\label{pro19} We have
\bq
\iota(L)&=&\min_{f\in\RR^V\setminus\{0\}}\f{\sum_{e\in\bar E}\vert d\bar f\vert(e) \mu(e)}{\pi[\vert f\vert]}.\eq
\end{prop}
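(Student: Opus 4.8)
The plan is to prove this as a coarea (layer-cake) identity, exhibiting $\iota(L)$ as the $L^1$-version of the variational formula \eqref{lambda} for $\lambda(L)$, exactly in the way that underlies the classical Cheeger inequality. I would first dispose of the easy direction: for a nonempty $A\subset V$, take $f\df\un_A$; its extension $\bar f$ equals $\un_A$ on $\bar V$ (it already vanishes outside $A\subset V$, hence at $0$), so $\vert d\bar f\vert=\vert d\bar f\vert^2$ on $\bar E$ and therefore $\sum_{e\in\bar E}\vert d\bar f\vert(e)\mu(e)=\sum_{e\in\bar E}\vert d\bar f\vert^2(e)\mu(e)=-\pi[\un_AL[\un_A]]$, while $\pi[\vert f\vert]=\pi[A]$. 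Taking the infimum of the right-hand side of the Proposition over this sub-family of test functions already yields a value $\leq\iota(L)$.

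For the reverse inequality I would first reduce to $f\geq 0$: passing from $f$ to $\vert f\vert$ leaves $\pi[\vert f\vert]$ unchanged, does not increase $\sum_e\vert d\bar f\vert(e)\mu(e)$ (since $\vert\,\vert f(y)\vert-\vert f(x)\vert\,\vert\leq\vert f(y)-f(x)\vert$, and the edges $\{x,0\}$ contribute the same amount because $\bar f(0)=0$), and preserves $f\neq 0$. So assume $f\geq 0$, hence $\bar f\geq 0$ on $\bar V$. Now use the elementary identities $a=\int_0^{\iy}\un_{\{a>t\}}\,dt$ and $\vert a-b\vert=\int_0^{\iy}\vert\un_{\{a>t\}}-\un_{\{b>t\}}\vert\,dt$, valid for all $a,b\geq 0$. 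Put $A_t\df\{x\in V:\bar f(x)>t\}$; because $\bar f(0)=0$, this is a subset of $V$, so $\{\bar f>t\}=A_t$ as subsets of $\bar V$. The first identity gives $\pi[\vert f\vert]=\int_0^{\iy}\pi[A_t]\,dt$. The second, applied on each edge $e=\{x,y\}$ with $a=\bar f(x)$, $b=\bar f(y)$ and summed against $\mu$ (a finite sum, so sum and integral commute), gives $\sum_{e\in\bar E}\vert d\bar f\vert(e)\mu(e)=\int_0^{\iy}\Bigl(\sum_{e\in\bar E}\vert d\bar{\un_{A_t}}\vert(e)\mu(e)\Bigr)\,dt$. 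For each fixed $t$ the inner sum equals $\sum_{e}\vert d\bar{\un_{A_t}}\vert^2(e)\mu(e)=-\pi[\un_{A_t}L[\un_{A_t}]]$, by the Dirichlet-form identity recorded just above the Proposition, and this is $\geq\iota(L)\,\pi[A_t]$ by the definition \eqref{iota} of $\iota(L)$ (both sides vanishing when $A_t=\emptyset$). Integrating in $t$ yields $\sum_{e\in\bar E}\vert d\bar f\vert(e)\mu(e)\geq\iota(L)\,\pi[\vert f\vert]$, which is the desired bound; together with the first paragraph this proves the claimed equality.

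The computation is essentially routine, and I do not expect a genuine obstacle. The one point to handle with care is the bookkeeping around the absorbing state $0$: one needs that the super-level sets $\{\bar f>t\}$ at levels $t\geq 0$ never contain $0$, so that $\bar{\un_{A_t}}$ really is $\un_{A_t}$ and the Dirichlet-form identity applies to it verbatim, and that in the reduction $f\mapsto\vert f\vert$ the edges touching $0$ contribute identically. Both are immediate consequences of the normalization $\bar f(0)=0$ built into the extension, so the argument goes through without difficulty.
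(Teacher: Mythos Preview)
Your proof is correct and follows essentially the same approach as the paper: both use the coarea (layer-cake) decomposition via the super-level sets $A_t=\{\bar f>t\}$, reduce to $f\geq 0$ by passing to $\vert f\vert$, and then integrate the defining inequality for $\iota(L)$ over $t$. The only cosmetic difference is that the paper writes the edge-sum identity by ordering each edge as $\bar f(y)>\bar f(x)$ and computing $\int_{\bar f(x)}^{\bar f(y)}\mu(e)\,dt$, whereas you invoke the pointwise formula $\vert a-b\vert=\int_0^\infty\vert\un_{\{a>t\}}-\un_{\{b>t\}}\vert\,dt$ directly; these are the same computation.
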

\begin{proof}
Restricting the minimum in the r.h.s.\ to indicator functions, we recover the r.h.s.\ of \eqref{iota}. It is thus sufficient to show that for any given
$f\in\RR^V\setminus\{0\}$,
\bqn{L1}
\f{\sum_{e\in\bar E}\vert d\bar f\vert(e) \mu(e)}{\pi[\vert f\vert]}&\geq &\iota(L).\eqn
Note that 
\bq
\fo e\in\bar E,\qquad \vert d\bar f \vert(e)&\geq &  \vert d \vert \bar f\vert \vert(e),\eq
so without lost of generality, we can assume that $f\geq 0$. For any $t\geq 0$, consider the set $F_t$ and its indicator function given by
\bq
F_t&\df& \{\bar f>t\}\ =\ \{f>t\},\\
f_t&\df&\un_{F_t}.\eq
Note that
\bq
\fo x\in V,\qquad f(x)&=&\int_0^{+\iy} f_t(x)\, dt,\eq
so that  by integration,
\bq
\pi[f]&=&\int_0^{+\iy}\pi[F_t]\, dt.\eq
\par
Furthermore, we have
\bq
\sum_{e\in\bar E}\vert d\bar f\vert(e) \mu(e)&=&\sum_{e\fd\{x,y\}\st \bar f(y)>\bar f(x)}(\bar f(y)-\bar f(x))\mu(e)\\
&=&\sum_{e\fd\{x,y\}\st \bar f(y)>\bar f(x)}\int_{\bar f(x)}^{\bar f(y)}\mu(e)\, dt\\
&=&\int_{0}^{+\iy}\sum_{e\fd\{x,y\}\st \bar f(y)>t\geq \bar f(x)}\mu(e)\, dt\\
&=&
\int_{0}^{+\iy}\mu(\pa F_t)\, dt,
\eq
where for any $A\subset V$, we define
\bq
\pa A&\df& \{\{x,y\}\in \bar E\st x\in A\hbox{ and } y\not\in A\}.\eq
Note that for any such $A$, we have
\bq
\mu(\pa A)&=&-\pi[\un_A L[\un_A]],\eq
so that
\bq
\sum_{e\in\bar E}\vert d\bar f\vert(e) \mu(e)&=&-\int_{0}^{+\iy}\pi[f_tL[f_t]]\, dt\\
&\geq & \iota(L) \int_0^{+\iy}\pi[F_t]\, dt\\
&=&\iota(L) \pi[f],\eq
showing \eqref{L1}.
\end{proof}
We are now ready to prove Theorem \ref{theo18}.
\begin{proof}[Proof of Theorem \ref{theo18}]
Given $g\in \RR^V$, let $f=g^2$. By Proposition \ref{pro19}, we compute
\bq
\iota(L)\pi[ f]&\leq & \sum_{e\in\bar E}\vert d\bar f\vert(e) \mu(e)\\
&=&\sum_{e\fd\{x,y\}\in \bar E}\vert \bar g^2(y)-\bar g^2(x)\vert \mu(e)\\
&=&\sum_{e\fd\{x,y\}\in \bar E}\vert \bar g(y)-\bar g(x)\vert \vert \bar g(y)+\bar g(x)\vert \mu(e)\\
&\leq &\sqrt{\sum_{e\fd\{x,y\}\in \bar E}(\bar g(y)-\bar g(x))^2\mu(e)}\sqrt{\sum_{e\fd\{x,y\}\in \bar E}( \bar g(y)+\bar g(x) )^2\mu(e)}\\
&\leq & \sqrt{-\pi[gL[g]]}\sqrt{2\sum_{e\fd\{x,y\}\in \bar E}( \bar g^2(y)+\bar g^2(x) )\mu(e)}\\
&=&\sqrt{-\pi[gL[g]]}\sqrt{4\sum_{e\fd\{x,y\}\in \bar E}\bar g^2(x) \mu(e)}\\
&=&\sqrt{-\pi[gL[g]]}\sqrt{2\sum_{x\in V} g^2(x)\pi(x)\sum_{y\in \bar V\setminus\{x\}}\bar L(x,y) }\\
&=&\sqrt{-\pi[gL[g]]}\sqrt{2\sum_{x\in V} g^2(x)\pi(x)\vert L(x,x)\vert }\\
&\leq &\sqrt{2\ell(L)}\sqrt{-\pi[gL[g]]}\sqrt{\pi[g^2] }\\
&=&\sqrt{2\ell(L)}\sqrt{-\pi[gL[g]]}\sqrt{\pi[f]}.\eq
Thus, we have 
\bq
\f{\iota(L)^2}{2\ell(L)}\pi[g^2]&\leq & -\pi[gL[g]],\eq
which gives the desired lower bound for $\lambda(L)$. The upper bound is immediate.
\end{proof}

The unoriented graph associated to $L$ is $\bar G\df (\bar V,\bar E_L)$ where 
\bq
\bar E_L&\df& \{ e\in\bar E\st \mu(e)>0\}.\eq
Consider $\TT$, the set of all subtrees of $\bar G$, and for any $T\in\TT$, consider the sub-Markovian generator $L_T$ on $V$ associated to $T$ via\bq
L_T(x,y)&\df&\lt\{\begin{array}{ll}
L(x,y)&\hbox{if $\{x,y\}\in \bar E(T)$,}\\
-\sum_{z\in V\setminus\{x\}}L_T(x,z)&\hbox{if $x=y$ and $\{x,0\}\not\in \bar E(T)$,}\\
-\sum_{z\in V\setminus\{x\}}L_T(x,z)-\bar L(x,0)&\hbox{if $x=y$ and $\{x,0\}\in \bar E(T)$,}\\
0&\hbox{otherwise,}
\end{array}\rt.\eq
where $x,y\in V$ and $\bar E(T)$ is the set of (unoriented) edges of $T$.\par
Note that $L_T$ is also reversible with respect to $\pi$ (it is  irreducible if and only if $0$ belongs to a unique edge of $\bar E(T)$). Denote $\mu_T$ the corresponding measure on $\bar E$.
It is clear that $\mu_T\leq \mu$, so we get
\bq
\iota(L_T)&\leq & \iota(L).\eq
In the spirit of {Benjamini} and  {Schramm} \cite{zbMATH01054104}, we may wonder if conversely, $\iota(L)$ could be bounded above in terms of $\max_{T\in\TT}\iota(L_T)$. A linear comparison is not possible:
\begin{prop}\label{pro20}
It does not exist a universal constant $\chi>0$ such that for any $L$ as above,
\bq
 \chi \iota(L)&\leq &\max_{T\in\TT}\iota(L_T)\eq
\end{prop}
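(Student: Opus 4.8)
The plan is to refute any universal $\chi$ by exhibiting an explicit sequence of sub-Markovian generators $L=L^{(n)}$, $n\in\N_+$, with $\iota(L^{(n)})$ bounded away from $0$ but $\max_{T\in\TT}\iota(L^{(n)}_T)\to 0$. For the construction I would take $V=V_n\df\{r\}\sqcup\{1,\dots,n\}$, reversing probability $\pi(r)\df\tfrac12$ and $\pi(i)\df\tfrac1{2n}$ for $1\le i\le n$, off-diagonal rates $L(r,i)\df\tfrac1n$, $L(i,r)\df 1$, $L(i,j)\df 0$ for $i\ne j$ in $\{1,\dots,n\}$, and absorption deficiencies $\bar L(r,0)=0$, $\bar L(i,0)=2$ (so $L(r,r)=-1$, $L(i,i)=-3$). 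One checks at once that $L$ is irreducible, reversible with respect to $\pi$, and not Markovian (the rows at the $i$'s sum to $-2$). The associated graph $\bar G$ is the ``theta graph'': $0$ and $r$ are joined by the $n$ internally disjoint paths $0$--$i$--$r$, and $0$ and $r$ are themselves non-adjacent in $\bar G$ because $\mu(\{r,0\})=\pi(r)\bar L(r,0)=0$.

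First I would show $\iota(L^{(n)})=1$ for every $n$. By the $L^1$ formula of Proposition~\ref{pro19}, $\iota(L)=\min_{A}\mu(\partial A)/\pi(A)$. A short computation shows that for $A$ with $r\notin A$ this ratio equals $L(i,r)+\bar L(i,0)=3$, whereas for $A$ with $r\in A$, writing $k\df|A\cap\{1,\dots,n\}|$, both $\mu(\partial A)$ and $\pi(A)$ equal $(n+k)/(2n)$, so the ratio is identically $1$. The choice $\bar L(i,0)=2$ is exactly what makes the length-$2$ detours balance and keeps $\iota(L^{(n)})$ from degenerating.

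The main work is the uniform estimate $\max_{T\in\TT}\iota(L^{(n)}_T)\le 1/n$. If $T\in\TT$ does not contain $0$, then $\iota(L_T)=0$: if some vertex of $V_n$ is missing from $T$ take $A$ to be that singleton (so $\partial_T A=\emptyset$), and otherwise take $A=V_n$ (again $\partial_T A$ carries no $\mu_T$-mass, since no $T$-edge meets $0$). So only spanning trees of $\bar G$ matter. For such a $T$, because each middle vertex has neighbours only $0$ and $r$, the unique $0$--$r$ path in $T$ has the form $0$--$j_T$--$r$ for a single ``junction'' $j_T\in\{1,\dots,n\}$, and every other middle vertex lies in exactly one of the two edges $\{0,i\},\{r,i\}$ of $\bar G$ (both being present would close a $4$-cycle through $j_T$). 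Set $S_r\df\{i\ne j_T:\{r,i\}\in\bar E(T)\}$ and $A_T\df\{r\}\cup S_r$. Since each $i\in S_r$ has $r$ as its only $T$-neighbour, the sole $T$-edge leaving $A_T$ is $\{r,j_T\}$, so $\partial_T A_T=\{\{r,j_T\}\}$ with $\mu_T(\{r,j_T\})=\pi(r)L(r,j_T)=\tfrac1{2n}$, while $\pi(A_T)\ge\pi(r)=\tfrac12$. Hence $\iota(L_T)\le\mu_T(\partial_T A_T)/\pi(A_T)\le 1/n$. Putting the pieces together, $\iota(L^{(n)})/\max_{T\in\TT}\iota(L^{(n)}_T)\ge n$ for every $n$, which is incompatible with any fixed $\chi>0$ in $\chi\,\iota(L)\le\max_{T\in\TT}\iota(L_T)$.

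I expect the evaluation $\iota(L^{(n)})=1$ to be routine; the point demanding care is the tree bound, namely ensuring that $\iota(L_T)\le 1/n$ holds for \emph{all} subtrees simultaneously --- in particular for the many reducible spanning trees, in which $0$ has degree $\ge 2$, and for the subtrees omitting $0$. That is why I would route the estimate through the one cut $A_T=\{r\}\cup S_r$, whose boundary is a single light edge no matter how $0$ is attached in $T$.
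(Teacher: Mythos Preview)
Your proof is correct and uses a genuinely different (and somewhat cleaner) counterexample than the paper's. The paper builds $L^{(n)}$ on $2n$ vertices arranged in two layers $V_0^{(n)},V_1^{(n)}$ of size $n$ each, with every cross edge present at a small rate $\epsilon$ and absorption only from $V_0^{(n)}$; it computes $\iota(L^{(n)})=n\epsilon$ directly and bounds $\iota(L_T^{(n)})\le\epsilon$ by a pigeonhole/orientation argument that locates a vertex of $V_1^{(n)}$ with a unique $T$-neighbour in $V_0^{(n)}$, then tests that singleton. Your theta-graph construction on $n+1$ vertices is structurally simpler, keeps $\iota(L^{(n)})\equiv 1$ rather than depending on an auxiliary parameter $\epsilon$, and the tree estimate comes from a single explicit cut $A_T=\{r\}\cup S_r$ that always isolates the lone bridge $\{r,j_T\}$ on the unique $0$--$r$ path. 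Both arguments exploit the same phenomenon: a tree can retain only one of the $n$ parallel routes carrying the boundary mass.

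One small presentational point: your sentence beginning ``If $T\in\TT$ does not contain $0$'' does not literally cover the subtrees that contain $0$ but omit some $i\in\{1,\dots,n\}$; you need those too before concluding ``only spanning trees of $\bar G$ matter''. The fix is immediate --- the singleton argument ``take $A=\{v\}$ for any $v\in V_n\setminus T$'' applies regardless of whether $0\in T$ --- so this is a matter of wording, not mathematics.
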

\begin{proof}
Let us construct a family $(\Ln)_{n\in\N_+}$ of sub-Markovian generators such that
\bqn{lim0}
\lim_{n\ri\iy}\f{\max_{T\in\TT}\iota(\Ln_T)}{\iota(\Ln)}&=&0\eqn
For any $n\in\N_+$, the state space $V^{(n)}$ of $\Ln$ is $\lin n\rin\times \{0,1\}$ (more generally, all notions associated to $\Ln$ will marked by the exponent $(n)$).
Denote $V_0^{(n)}\df \lin n\rin\times\{0\}$ and $V_1^{(n)}\df \lin n\rin\times\{1\}$.
We take
\bq
\Ln(x,y)&\df&
\lt\{\begin{array}{ll}
\epsilon &\hbox{if $x\in \Vn_i,\, y\in \Vn_{1-i}$ with $i\in\{0,1\}$,}\\
n\epsilon+1&\hbox{if $x=y\in \Vn_0$,}\\
n\epsilon&\hbox{if $x=y\in \Vn_1$,}\\
0&\hbox{otherwise,}\end{array}\rt.\eq
where $x,y \in V^{(n)}$, and $\epsilon>0$, that will depend on $n$, is such that $n\epsilon <1/2$.

Recall that $0$ is the cemetery point added to $V^{(n)}$, we have
\bq
\fo x\in\Vn,\qquad \bar L^{(n)}(x,0)&=&\lt\{\begin{array}{ll}
1&\hbox{if $x\in \Vn_0$,}\\
0&\hbox{if $x\in \Vn_1$.}
\end{array}\rt.\eq
Note that $\pi^{(n)}$ is the uniform probability on $\Vn$. Let us show that
\bqn{iLn1}
\iota(\Ln)&=&n\epsilon.\eqn
Consider any $\emptyset\neq A\subset \Vn$, and decompose $A=A_0\sqcup A_1$, with $A_0\df A\cap \Vn_0$ and $A_1\df A\cap \Vn_1$. Denote $a_0\df \vert A_0\vert$ and $a_1\df \vert A_1\vert$.
We have
\bq
\pa A&=&\{\{x,y\}\st x\in A_0,\, y\in \Vn_1\setminus A_1\}\\
& & \qquad \sqcup \{\{x,y\}\st x\in \Vn_0\setminus A_0,\, y\in  A_1\}\sqcup\{\{x,0\}\st x\in A_0\},\eq
and thus
\bq
\mu^{(n)}(\pa A)&=&\f1{2n}(\epsilon (a_0(n-a_1)+a_1(n-a_0))+a_0).\eq
It follows that
\bq
\f{\mu^{(n)}(\pa A)}{\pi^{(n)}(A)}&=&n\epsilon +\f{a_0(1-2\epsilon a_1)}{a_0+a_1}.\eq
Taking into account that $1-2\epsilon a_1>0$, the r.h.s.\ is minimized with respect to $a_0\in\lin 0, n\rin$ when $a_0=0$
and we then get (independently of $a_1$),
\bq
\f{\mu^{(n)}(\pa A)}{\pi^{(n)}(A)}&=&n\epsilon.\eq
We deduce \eqref{iLn1}.

Consider any $T\in\TT^{(n)}$ and let us check that
\bqn{iLn2}
\iota(\Ln_{T})&\leq &\epsilon.\eqn
Observe there exists $x\in \Vn_1$ such that there is a unique $y\in \Vn_0$ with $\{x,y\}$ being an edge of $T$.
Indeed, put on the edges of $T$ the orientation toward the root $0$. Thus from any vertex $x\in V_1^{(n)}$ there is a unique exiting edge (but it is possible there are several incoming edges). Necessarily, there is a vertex in $\Vn_0$ whose edge exits to $0$. So there are at most $n-1$ vertices from $\Vn_0$ whose exit edge points toward $\Vn_1$. In particular, there is at least one vertex from $\Vn_1$ which is not pointed out by a vertex from $\Vn_0$. We can take $x$ to be this vertex from $\Vn_1$ and $y\in \Vn_0$ is the vertex pointed out by the oriented edge exiting from $x$.

Considering the singleton $\{x\}$, we get
\bq
\mu^{(n)}_T(\pa \{x\})&=&\mu_T(\{x,y\})\ =\ \f{\epsilon}{2n},\\
\pi^{(n)}(x)&=&\f1{2n}.\eq
implying \eqref{iLn2} (a little more work would prove that an equality holds there). As a consequence, we see that
\bq
\max_{T\in\TT^{(n)}}\iota(\Ln_T)&\leq &\epsilon.\eq
Taking for instance  $\epsilon \df1/(4n)$ to fulfill the condition $n\epsilon <1/2$,
we obtain
\bq
\f{\max_{T\in\TT^{(n)}}\iota(\Ln_T)}{\iota(\Ln)}&\leq &\f1{n}\eq
and \eqref{lim0} follows.
\end{proof}

\section{Hardy's inequalities}\label{a2}
Our goal here is to extend the validity of Hardy's inequalities on finite trees to general denumerable trees, without assumption of local finiteness. We begin by recalling the Hardy's inequalities on finite trees. Consider 
 $\cT=(\bar V, \bar E,0)$ a finite tree rooted in 0, whose vertex and (undirected) edge sets are $\bar V$ and $\bar E$.
 Denote $V\df \bar V\setminus \{0\}$, for each $x\in V$, the parent $p(x)$ of $x$ is the neighbor of $x$ in the direction of 0.
 The other neighbors of $x$ are called the children of $x$ and their set is written $C(x)$. For $x=0$, by convention $C(0)$ is the set of neighbors of 0.
Let be given two positive measures $\mu,\nu$ defined on $V$.

Consider $c(\mu,\nu)$ the best constant $c\geq 0$ in the inequality
\bqn{c}
\fo f\in \RR^V,\qquad
\mu[f^2]&\leq & c\sum_{x\in V} (f(p(x))-f(x))^2\nu(x)\eqn
 where $f$ was extended to 0 via $f(0)\df 0$.

According to \cite{MR1709277} (see also  Evans, Harris and Pick \cite{MR1345719}), $c(\mu,\nu)$ can be estimated up to a factor 16 via Hardy's inequalities for trees, see \eqref{H} below.
To describe them we need several notations.

Let $\TT$ the set of subsets $T$ of $V$ satisfying the following conditions
\begin{itemize}
\item $T$ is non-empty and connected (in $\cT$),
\item $T$ does not contain 0,
\item if $x\in T$ has a child in $T$, then all children of $x$ belong to $T$.
\end{itemize}
Note that any $T\in\TT$ admits a closest element to 0, call it $m(T)$, we have $m(T)\neq 0$.
When $T$ is not reduced to the singleton $\{m(T)\}$, the  connected components  of $T\setminus\{m(T)\}$  are indexed by the set of the children of $m(T)$, namely $C(m(T))$. For $y\in C(m(T))$, denote by $T_y$  the connected component of $T\setminus\{m(T)\}$ containing $y$. Note that $T_{y}\in \TT$.

We extend $\nu$ as a functional on $\TT$, via the iteration
\begin{itemize}
\item when $T$ is the singleton $\{m(T)\}$, we take $\nu(T)\df \nu(m(T))$,
\item when $T$ is not a singleton, decompose $T$ as $\{m(T)\}\sqcup \bigsqcup_{y\in C(m(T))}T_{y}$, then $\nu$ satisfies
\bqn{rec}
\f1{\nu(T)}&=&\f1{\nu(m(T))}+\f1{\sum_{y\in C(m(T))} \nu(T_{y})}.\eqn
\end{itemize}
For $x\in V$, let $S_x$ be the set of vertices $y\in V$ whose path to 0 pass through $x$.
For any $T\in \TT$ we associate the subset 
\bq
T^*&\df& (S_{m(T)}\setminus T)\sqcup L(T)\eq
where $L(T)$ is the set of leaves of $T$, namely the $x\in T$ having no children in $T$. Equivalently, $T^*$ is the set of all descendants of the leaves of $T$, themselves included.

Consider $\SS\subset \TT$, the set of $T\in\TT$ which are such that $m(T)$ is a child of $0$. Finally, define
\bqn{b}
b(\mu,\nu)&\df& \max_{T\in\SS} \f{\mu(T^*)}{\nu(T)}.\eqn
We are interested in this quantity because of the Hardy inequality:
\bqn{H}
b(\mu,\nu)\ \leq \ {c}(\mu,\nu)\ \leq\ 16\,b(\mu,\nu).\eqn
Our goal here is to extend this inequality to the situation where $V$ is denumerable and where $\mu$ and $\nu$ are two positive measures on $V$, with $\sum_{x\in V}\mu(x)<+\iy$. 
\begin{remark}\label{unichild}
Without lost of generality, we can assume 0 has only one child, because what happens on different $S_x$ and $S_y$, where both $x$ and $y$ are children of $0$, can be treated separately.
\end{remark}
More precisely, while $V$ is now (denumerable) infinite, we first assume that the height of $\cT\df(\bar V, \bar E,0)$ is finite (implying that $\cT$ cannot be locally finite).
Recall that the height $h(x)$ of a vertex $x\in \bar V$ is the smallest number of edges linking $x$ to $0$. 
The assumption that $\sup_{x\in \bar V} h(x)<+\iy$ has the advantage that the iteration \eqref{rec} enables us to compute $\nu$ on $\TT$, starting from the highest vertices from an element of $\TT$.
Then $b(\mu,\nu)$ is defined exactly as in \eqref{b}, except the maximum has to be replaced by a supremum.
\par
Extend $c(\mu,\nu)$ as the minimal constant $c\geq 0$ such that \eqref{c} is satisfied, with the possibility that $c(\mu,\nu)=+\iy$ when  there is no such $c$.
Note that in \eqref{c}, the space $\RR^V$ can be reduced and replaced by $\cB(V)$, the space of bounded mappings on $V$:
\begin{lmm}\label{lem21}
We have
\bq
c(\mu,\nu)&=&\sup_{f\in\cB(V)\setminus\{0\}}\f{\mu[f^2]}{ \sum_{x\in V} (f(p(x))-f(x))^2\nu(x)}.\eq
\end{lmm}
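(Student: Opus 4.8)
The plan is a routine truncation argument. Write $c^{*}$ for the supremum on the right-hand side of the asserted identity, so that $c^{*}\le c(\mu,\nu)$ is immediate because $\cB(V)\setminus\{0\}\subset\RR^{V}\setminus\{0\}$; here we use that for $f\neq 0$ the quantity $\sum_{x\in V}(f(p(x))-f(x))^{2}\nu(x)$ is strictly positive, since $\nu$ is a positive measure and $\cT$ is connected, so a function with vanishing energy is constant along every edge, hence $\equiv f(0)=0$. It remains to prove $c(\mu,\nu)\le c^{*}$, that is,
\[
\mu[f^{2}]\ \le\ c^{*}\sum_{x\in V}(f(p(x))-f(x))^{2}\nu(x)
\]
for every $f\in\RR^{V}$ with $f(0)=0$. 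We may assume $c^{*}<+\infty$, and, testing $c^{*}$ against the indicator $\1_{\{x_{0}\}}$ of a leaf $x_{0}$ of $\cT$ (a leaf exists because $\cT$ has finite height), we record that $c^{*}>0$.

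Fix such an $f$. For $n\in\N$ put $\phi_{n}(t):=(t\wedge n)\vee(-n)$ and $f_{n}:=\phi_{n}\circ f$, extended by $f_{n}(0):=\phi_{n}(0)=0$, so that $f_{n}\in\cB(V)$. The essential point is that $\phi_{n}$ is $1$-Lipschitz; hence for every $x\in V$, including those with $p(x)=0$ (where the value $\phi_{n}(0)=0$ is used on both sides),
\[
\bigl(f_{n}(p(x))-f_{n}(x)\bigr)^{2}=\bigl(\phi_{n}(f(p(x)))-\phi_{n}(f(x))\bigr)^{2}\ \le\ \bigl(f(p(x))-f(x)\bigr)^{2},
\]
and summing against the nonnegative weights $\nu(x)$ gives $\sum_{x}(f_{n}(p(x))-f_{n}(x))^{2}\nu(x)\le\sum_{x}(f(p(x))-f(x))^{2}\nu(x)$. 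If the latter is $+\infty$ the claimed bound holds trivially (as $c^{*}>0$); otherwise, applying the definition of $c^{*}$ to the bounded function $f_{n}$ (for $n$ large enough that $f_{n}\not\equiv 0$) yields
\[
\mu[f_{n}^{2}]\ \le\ c^{*}\sum_{x}\bigl(f_{n}(p(x))-f_{n}(x)\bigr)^{2}\nu(x)\ \le\ c^{*}\sum_{x}\bigl(f(p(x))-f(x)\bigr)^{2}\nu(x).
\]
Since $\lvert f_{n}(x)\rvert=\lvert f(x)\rvert\wedge n$ increases to $\lvert f(x)\rvert$ for each $x$, we have $f_{n}^{2}\uparrow f^{2}$ pointwise, and because $\mu$ is a finite measure (this is where $\sum_{x}\mu(x)<+\infty$ enters) the monotone convergence theorem gives $\mu[f_{n}^{2}]\uparrow\mu[f^{2}]$. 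Letting $n\to\infty$ in the previous display finishes the argument.

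I do not expect a genuine obstacle here; the argument is short and self-contained. The two places that merely need care are the bookkeeping at the root --- the identity $\phi_{n}(0)=0$ is what makes the contraction estimate valid on the edges incident to $0$, so that $f_{n}$ is a legitimate competitor with $f_{n}(0)=0$ --- and the degenerate cases ($c^{*}=0$ or $+\infty$, or an infinite energy), all of which are disposed of above. This lemma is precisely the reduction that later permits passing from finite trees to denumerable ones, since it confines the discussion to bounded test functions.
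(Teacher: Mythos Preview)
Your proof is correct and follows essentially the same route as the paper: truncate $f$ by $f_M:=(f\wedge M)\vee(-M)$, use that this map is $1$-Lipschitz to bound the Dirichlet energy of $f_M$ by that of $f$, apply the defining inequality for $c^*$ to the bounded $f_M$, and pass to the limit via monotone convergence. You are a bit more careful than the paper about edge cases (positivity of the denominator, the degenerate cases $c^*=+\infty$ or infinite energy), which is fine; one small remark is that the monotone convergence theorem does not actually require $\mu$ to be finite, so your parenthetical ``this is where $\sum_x\mu(x)<+\infty$ enters'' overstates the role of that hypothesis here.
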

\begin{proof}
Denote $\wi c(\mu,\nu)$ the above r.h.s. A priori we have $c(\mu,\nu)\geq \wi c(\mu,\nu)$. To prove the reverse bound, consider any $f\in \RR^V$
and consider for $M>0$, 
\[
f_M\df (f\wedge M)\vee(-M).
\]
Note that 
\bq
\sum_{x\in V} (f_M(p(x))-f_M(x))^2\nu(x)&\leq &\sum_{x\in V} (f(p(x))-f(x))^2\nu(x).
\eq
(This a general property of Dirichlet forms and comes from the 1-Lipschitzianity of the mapping $\RR\ni r\mapsto (r\wedge M)\vee(-M)$.) Since $f_M\in\cB(V)$, we have
\bq
\mu[f_M^2]&\leq & \wi c(\mu,\nu)\sum_{x\in V} (f_M(p(x))-f_M(x))^2\nu(x)\\
&\leq &\wi c(\mu,\nu)\sum_{x\in V} (f(p(x))-f(x))^2\nu(x).\eq
Letting $M$ go to infinity, we get at the limit by monotonous convergence
\bq
\mu[f^2]
&\leq &\wi c(\mu,\nu)\sum_{x\in V} (f(p(x))-f(x))^2\nu(x).\eq
Since this is true for all $f\in \RR^V$, we deduce that $c(\mu,\nu)\leq \wi c(\mu,\nu)$.
\end{proof}

 Consider $(x_n)_{n\in\N_+}$ an exhaustive sequence of $\bar V$, with $x_0=0$ and such that for any 
$n\in\N_+$, $\bar V_n\df\{x_0, x_1, ..., x_n\}$ is connected. We denote $\cT_n$ the tree rooted on 0 induced by $\cT$ on $\bar V_n$ and as above,
$V_n\df \bar V_n\setminus\{0\}=\{x_1, ..., x_n\}$.
For any $n\in\NN_+$ and $x\in V_n$, introduce the set
\bq
R_n(x)&\df& \{x\}\bigsqcup_{y\in C(x)\setminus V_n}S_y.\eq
In words, this is the set of elements of $V$ whose path to 0 first enters $V_n$ at $x$.\par
From now on, we assume that $0$ has only one child, taking into account Remark \ref{unichild}. It follows that 
\bqn{partit}
V&=&\bigsqcup_{x\in V_n} R_n(x).\eqn
Let $\mu_n$ and $\nu_n$ be the measures defined on $V_n$ via
\bq
\fo x\in V_n,\qquad \lt\{\begin{array}{rcl}\mu_n(x)&\df& \mu(R_n(x)),\\
\nu_n(x)&\df&\nu(x).\end{array}\rt.\eq
The advantage of the $\mu_n$ and $\nu_n$ is that they brought us back to the finite situation while enabling to approximate $c(\mu,\nu)$:
\begin{prop}\label{pro22}
We have
\bq
\lim_{n\ri\iy} c(\mu_n,\nu_n)&=&c(\mu,\nu).\eq
\end{prop}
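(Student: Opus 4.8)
The goal is to prove Proposition~\ref{pro22}: that $c(\mu_n,\nu_n)\to c(\mu,\nu)$ as $n\to\infty$, where $\mu_n,\nu_n$ are the finite restrictions defined via the partition \eqref{partit}. The plan is to establish the two inequalities $\limsup_n c(\mu_n,\nu_n)\le c(\mu,\nu)$ and $\liminf_n c(\mu_n,\nu_n)\ge c(\mu,\nu)$ separately, working throughout with the variational characterization from Lemma~\ref{lem21} so that only bounded test functions need to be considered.

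\textbf{Lower bound on the limit ($\liminf_n c(\mu_n,\nu_n)\ge c(\mu,\nu)$).} First I would fix any bounded $f\in\cB(V)$ and produce from it, for each $n$, a test function $f_n$ on $V_n$ that nearly realizes the Rayleigh quotient defining $c(\mu,\nu)$. The natural choice is the restriction $f_n:=f|_{V_n}$. With this choice the Dirichlet-form denominator is monotone: $\sum_{x\in V_n}(f_n(p(x))-f_n(x))^2\nu_n(x)=\sum_{x\in V_n}(f(p(x))-f(x))^2\nu(x)\le\sum_{x\in V}(f(p(x))-f(x))^2\nu(x)$, since the edges of $\cT_n$ form a subset of those of $\cT$ and $\nu_n=\nu$ on $V_n$. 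For the numerator, $\mu_n[f_n^2]=\sum_{x\in V_n}f(x)^2\mu(R_n(x))$; since $\sum_{x\in V}\mu(x)<+\infty$ and $f$ is bounded, one shows $\mu_n[f_n^2]\to\mu[f^2]$ by dominated convergence (the value $f(x)^2$ attached to the ``mass block'' $R_n(x)$ converges to the true $f$-values as the blocks shrink to points; more carefully, $|\mu_n[f_n^2]-\mu[f^2]|\le \|f\|_\infty^2\,\mu\bigl(\{y: y\notin V_n\}\bigr)\to 0$ once one checks that on $R_n(x)\setminus\{x\}$ the function $f$ can differ from $f(x)$, so the bound is via the tail mass of $\mu$). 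Hence $c(\mu_n,\nu_n)\ge \mu_n[f_n^2]/\sum(\cdots)\to \mu[f^2]/\sum_{x\in V}(f(p(x))-f(x))^2\nu(x)$. Taking the supremum over $f\in\cB(V)$ and invoking Lemma~\ref{lem21} gives $\liminf_n c(\mu_n,\nu_n)\ge c(\mu,\nu)$.

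\textbf{Upper bound on the limit ($\limsup_n c(\mu_n,\nu_n)\le c(\mu,\nu)$).} Here I would take, for each $n$, a near-optimal bounded $g_n\in\cB(V_n)$ with $\mu_n[g_n^2]\ge (c(\mu_n,\nu_n)-1/n)\sum_{x\in V_n}(g_n(p(x))-g_n(x))^2\nu_n(x)$, normalized so the denominator equals $1$. Extend $g_n$ to all of $V$ by making it constant on each block $R_n(x)$, i.e.\ $\tilde g_n(y):=g_n(x)$ for $y\in R_n(x)$. Then $\tilde g_n$ is bounded, and crucially the extension adds no Dirichlet energy: every edge $\{p(y),y\}$ of $\cT$ with $y\notin V_n$ lies inside a single block $R_n(x)$, so $\tilde g_n$ is constant across it; the only edges contributing are those of $\cT_n$, giving $\sum_{x\in V}(\tilde g_n(p(x))-\tilde g_n(x))^2\nu(x)=\sum_{x\in V_n}(g_n(p(x))-g_n(x))^2\nu_n(x)=1$. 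Meanwhile $\mu[\tilde g_n^2]=\sum_{x\in V_n}g_n(x)^2\mu(R_n(x))=\mu_n[g_n^2]\ge c(\mu_n,\nu_n)-1/n$. Therefore $c(\mu,\nu)\ge \mu[\tilde g_n^2]/1\ge c(\mu_n,\nu_n)-1/n$ for every $n$, which yields $\limsup_n c(\mu_n,\nu_n)\le c(\mu,\nu)$. (One subtlety: if $c(\mu,\nu)=+\infty$ only the lower-bound direction is needed, and if some $c(\mu_n,\nu_n)=+\infty$ the normalization step must instead chase a sequence of $g_n$ with denominator $1$ and numerator $\to\infty$, forcing $c(\mu,\nu)=+\infty$ as well.)

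Combining the two bounds gives $\lim_n c(\mu_n,\nu_n)=c(\mu,\nu)$. The main obstacle I anticipate is the careful justification that the extension/restriction operations behave well at the level of both the $\mu$-mass and the Dirichlet energy simultaneously --- in particular making the block-constancy argument fully rigorous so that no edge energy is lost or gained, and handling the tail-of-$\mu$ estimate uniformly using only $\sum_x\mu(x)<+\infty$ and boundedness of the test functions (this is precisely why Lemma~\ref{lem21} was proved first, to restrict to $\cB(V)$). Everything else is bookkeeping with the partition \eqref{partit} and the definitions of $\mu_n,\nu_n$.
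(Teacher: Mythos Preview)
Your proof is correct, and in the ``hard'' direction it is actually more elementary than the paper's. For the upper bound $\limsup_n c(\mu_n,\nu_n)\le c(\mu,\nu)$, your block-constant extension $\tilde g_n$ is exactly what the paper does: the paper phrases it as identifying $\RR^{V_n}$ with the space $\cB(\cF_n)$ of $\cF_n$-measurable functions on $V$ (where $\cF_n$ is the $\sigma$-field generated by the partition $\{R_n(x)\}_{x\in V_n}$), observes that the Rayleigh quotients match, and uses $\cF_n\subset\cF_{n+1}$ to get, additionally, that the sequence $(c(\mu_n,\nu_n))_n$ is non-decreasing.

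For the lower bound $\liminf_n c(\mu_n,\nu_n)\ge c(\mu,\nu)$, the paper takes a different route: given a near-optimal $f\in\cB(V)$, it sets $f_n$ equal to the \emph{conditional expectation} of $f$ with respect to $\cF_n$ and the normalized measure $\pi=\mu/\mu(V)$, then invokes the bounded martingale convergence theorem for pointwise convergence $f_n\to f$, Fatou's lemma for the Dirichlet denominator, and bounded convergence for the numerator. Your choice $f_n=f|_{V_n}$ sidesteps all of this: the denominator is then simply a partial sum of the full Dirichlet form (hence $\le$ it), and the numerator converges by the direct tail estimate $|\mu_n[f_n^2]-\mu[f^2]|\le 2\|f\|_\infty^2\,\mu(V\setminus V_n)\to 0$ (note the factor $2$, coming from $|f(x)^2-f(y)^2|\le 2\|f\|_\infty^2$; your parenthetical bound is off by this constant but the conclusion is unaffected). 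Your argument uses only that $\mu(V)<\infty$ and $f$ is bounded, which is exactly what Lemma~\ref{lem21} buys; the paper's martingale machinery is not needed.
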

\begin{proof}
We first check that the limit exists. For $n\in\NN_+$, consider the sigma-field $\cF_n$
generated by the partition \eqref{partit}. 
To each $\cF_n$-measurable function $f$, associate the function $f_n$ defined on $V_n$ by
\bq
\fo x\in V_n,\qquad f_n(x)&\df& f(x).\eq
This function determines $f$, since
\bq
\fo x\in V_n,\,\fo y\in R_n(x),\qquad f(y)&=&f_n(x).\eq
Furthermore, we have:
\bq
\mu[f^2]&= &\mu_n[f_n^2]\\
 \sum_{x\in V} (f(p(x))-f(x))^2\nu(x)&=& \sum_{x\in V_n} (f_n(p(x))-f_n(x))^2\nu_n(x).
\eq
It follows that
\bq
c(\mu_n,\nu_n)&=&\sup_{f\in\cB(\cF_n)\setminus\{0\}}\f{\mu[f^2]}{ \sum_{x\in V} (f(p(x))-f(x))^2\nu(x)},\eq
where $\cB(\cF_n)$ is the set of $\cF_n$-measurable functions, which are necessarily bounded, i.e., belong to $\cB(V)$. Since for any $n\in\N_+$ we have $\cF_n\subset\cF_{n+1}$, we get that the sequence $(c(\mu_n,\nu_n))_{n\in\NN_+}$ is non-decreasing and, taking into account Lemma \ref{lem21}, that
\bq
\lim_{n\ri\iy} c(\mu_n,\nu_n)&\leq &c(\mu,\nu).\eq
To get the reverse bound, first assume that $c(\mu,\nu)<+\iy$.
For given $\epsilon>0$, find a function $f\in\cB(V)$ with
\bq
\f{\mu[f^2]}{ \sum_{x\in V} (f(p(x))-f(x))^2\nu(x)}&\geq & c(\mu,\nu)-\epsilon.\eq
Consider $\pi$ the normalization of $\mu$ into a probability measure and let $f_n$ be the conditional expectation of $f$ with respect to $\pi$ and to the sigma-field $\cF_n$.
Note that the $f_n$ are uniformly bounded by $\lVe f\rVe_{\iy}$.
Thus by the bounded martingale convergence theorem and since $\pi$ gives a positive weight to any point of $V$, we have 
\bq
\fo x\in V,\qquad \lim_{n\ri\iy}f_n(x)&=&f(x).\eq
From Fatou's lemma, we deduce\bq
\lefteqn{\liminf_{n\ri\iy}
\sum_{x\in V_n} (f_n(p(x))-f_n(x))^2\nu_n(x)}\\&=&\liminf_{n\ri\iy}
\sum_{x\in V} (f_n(p(x))-f_n(x))^2\un_{V_n}(x)\nu(x)\\
&\geq & \sum_{x\in V} \liminf_{n\ri\iy} [(f_n(p(x))-f_n(x))^2\un_{V_n}(x)]\,\nu(x)\\
&=& \sum_{x\in V} (f(p(x))-f(x))^2\nu(x).
\eq
By another application of the bounded martingale convergence theorem, we get
\bq
\lim_{n\ri\iy} \mu_n[f_n^2]&=&\lim_{n\ri\iy} \mu[f_n^2]\\
&=&\mu[f^2],\eq
so that
\bq
\limsup_{n\ri\iy}\f{\mu_n[f_n^2]}{ \sum_{x\in V} (f_n(p(x))-f_n(x))^2\nu(x)}&\geq & \f{\mu[f^2]}{ \sum_{x\in V} (f(p(x))-f(x))^2\nu(x)}.\eq
It follows that
\bq
\lim_{n\ri\iy} c(\mu_n,\nu_n)&\geq &c(\mu,\nu)-\epsilon,\eq
 and since $\epsilon>0$ can be chosen arbitrary small,
 \bq
 \lim_{n\ri\iy} c(\mu_n,\nu_n)&\geq &c(\mu,\nu).\eq
 It remains to deal with the case where $c(\mu,\nu)=+\iy$.
 Then for any $M>0$, we can find a function $f\in\cB(V)$ with
 \bq
\f{\mu[f^2]}{ \sum_{x\in V} (f(p(x))-f(x))^2\nu(x)}&\geq & M.\eq
By the above arguments, we end up with 
 \bq
\lim_{n\ri\iy} c(\mu_n,\nu_n)&\geq &M,\eq
and since $M$ can be arbitrary large,
\bq
\lim_{n\ri\iy} c(\mu_n,\nu_n)&=&+\iy\ =\ c(\mu,\nu).\eq
 \end{proof}
Our next goal is to show the same result holds for $b(\mu,\nu)$. We need some additional notations. 
The integer $n\in\NN_+$ being fixed, denote $\TT_n$ and $\SS_n$ the sets $\TT$ and $\SS$ associated to $\cT_n$.
The functional $\nu_n$ is extended to $\TT_n$ via the iteration \eqref{rec} understood  in $\cT_n$.
\par
To any $T\in\TT_n$, associate $ T_n$ the minimal element of $\TT$ containing $T$.
It is obtained in the following way: to any $x\in T$, if $x$ has a child in $T$, then add all the children of $x$ in $V$, and otherwise do not add any other points.
\begin{lmm}\label{compwin}
We have the comparisons
\bq
\nu_n(T)&\geq & \nu( T_n),\\
\mu_n(T^*)&\leq &\mu( T_n^*),\eq
where $T^*$ is understood in $\cT_n$ (and $T_n^*$ in $\cT$).
\end{lmm}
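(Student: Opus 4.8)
The plan is to first work out the combinatorial relation between $T$ and $T_n$, and then prove the two comparisons by entirely different means: the inequality for $\mu$ is a direct mass count, while the one for $\nu$ is an induction on the height of $T$ built on the defining recursion \eqref{rec}. So the first step is to record that $T_n$ is well defined, belongs to $\TT$, contains $T$, and that $m(T_n)=m(T)=:m$; more precisely, $T_n$ is obtained from $T$ by adjoining, for each non-leaf $x$ of $T$ (i.e.\ each $x\in T$ possessing a child in $T$), all the $\cT$-children of $x$ that lie outside $V_n$, and nothing else. The point is that $T$, being an element of $\TT_n$, is already closed under $\cT_n$-children, so a single layer of adjunction produces a set of $\TT$. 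From this description I would extract two facts: \emph{(a)} every adjoined vertex $y$ is a leaf of $T_n$ with $T_n\cap S_y=\{y\}$, so that $L(T_n)=L(T)\sqcup(T_n\setminus T)$ (leaves read in $\cT_n$, resp.\ $\cT$); \emph{(b)} if $T$ is not a singleton, then for each child $y$ of $m$ in $\cT_n$ the $\cT$-connected component of $T_n\setminus\{m\}$ through $y$ equals $(T_y)_n$, while the remaining components of $T_n\setminus\{m\}$ are the singletons $\{y\}$ with $y\in C(m)\setminus V_n$ (each adjoined, since $m$ is a non-leaf of $T$).

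\emph{Comparison for $\mu$.} For $x\in V_n$, the partition \eqref{partit} restricts on $S_x$ to $S_x=\bigsqcup_{w\in S_x\cap V_n}R_n(w)$, because the path to $0$ of any descendant of $x$ first re-enters $V_n$ at a vertex of $S_x\cap V_n$; hence $\mu(S_x)=\sum_{w\in S_x\cap V_n}\mu_n(w)$. Since $T$ is connected and $\cT_n$-children-closed, $T^*=\bigsqcup_{x\in L(T)}(S_x\cap V_n)$ in $\cT_n$, and summing gives $\mu_n(T^*)=\sum_{x\in L(T)}\mu(S_x)$. In $\cT$, $T_n^*=\bigsqcup_{x\in L(T_n)}S_x$; using fact \emph{(a)} and that distinct leaves of a tree are incomparable (so these $S_x$ are disjoint),
\bq
\mu(T_n^*)&=&\sum_{x\in L(T)}\mu(S_x)+\sum_{x\in T_n\setminus T}\mu(S_x)\ =\ \mu_n(T^*)+\sum_{x\in T_n\setminus T}\mu(S_x)\ \geq\ \mu_n(T^*),\eq
since $\mu\geq 0$, which is the second comparison.

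\emph{Comparison for $\nu$.} I would induct on $\max_{x\in T}h(x)$. If $T=\{m\}$ is a singleton, then $T_n=\{m\}$ as well and $\nu_n(T)=\nu(m)=\nu(T_n)$. If $T$ is not a singleton, apply \eqref{rec} once inside $\cT_n$ and once inside $\cT$, invoking fact \emph{(b)}:
\bq
\frac1{\nu_n(T)}&=&\frac1{\nu(m)}+\frac1{\sum_{y}\nu_n(T_y)},\\
\frac1{\nu(T_n)}&=&\frac1{\nu(m)}+\frac1{\sum_{y}\nu((T_y)_n)+\sum_{y'\in C(m)\setminus V_n}\nu(y')},\eq
the sums over $y$ running over the children of $m$ in $\cT_n$. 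Each $T_y$ has height one less than $T$, so the induction hypothesis compares $\nu_n(T_y)$ with $\nu((T_y)_n)$; feeding these comparisons into the two identities, together with the monotonicity of $t\mapsto\bigl(\tfrac1{\nu(m)}+\tfrac1t\bigr)^{-1}$ and the sign of the extra terms $\nu(y')\geq 0$, yields the asserted inequality $\nu_n(T)\geq\nu(T_n)$ and closes the induction.

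\emph{Main obstacle.} Step 1 is bookkeeping but must be done attentively: the crucial point is that no vertex deeper than one layer below $T$ is ever adjoined, which is exactly what makes $(T_n)_y=(T_y)_n$ and the clean component decomposition valid. The substantive difficulty is the $\nu$-comparison: reconciling the two instances of \eqref{rec} across $\cT_n\subset\cT$ and controlling, uniformly over the (possibly infinite) set of children of $m$, the interaction between the recursively compared common subtrees $T_y$ versus $(T_y)_n$ and the additional child-contributions $\nu(y')$. I expect the correct handling of that interaction to be the main hurdle.
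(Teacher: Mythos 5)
Your bookkeeping in Step 1 (the one-layer description of $T_n$, facts \emph{(a)} and \emph{(b)}) and your proof of the second comparison $\mu_n(T^*)\le\mu(T_n^*)$ are correct, and the $\mu$-part is essentially the computation in the paper: identify $\mu_n(T^*)=\sum_{x\in L_n(T)}\mu(S_x)$ and enlarge the family of leaves from $L_n(T)$ to $L(T_n)$.

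The gap is exactly where you predicted it, in the $\nu$-comparison, and it cannot be closed. Write $D_n:=\sum_{y}\nu_n(T_y)$ and $D:=\sum_{y}\nu((T_y)_n)+\sum_{y'\in C(m)\setminus V_n}\nu(y')$ for the two denominators in your displays; the asserted bound $\nu_n(T)\ge\nu(T_n)$ is equivalent to $D_n\ge D$. The induction hypothesis only gives $D_n\ge\sum_{y}\nu((T_y)_n)$, and the extra terms $\nu(y')$ are strictly positive (since $\nu$ is a positive measure), so they \emph{increase} $D$: adding parallel branches increases the ``conductance'' $\nu(T_n)$, i.e.\ they work against the inequality you want, not for it. What your two identities actually prove, by the same induction, is the reverse bound $\nu_n(T)\le\nu(T_n)$. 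Indeed the first comparison of the lemma fails in general: take $\cT$ with root $0$, a single child $m$, and leaf-children $y_1,y_2,\dots$ of $m$; let $\nu(m)=1$, $\nu(y_i)=2^{-i}$, and $V_n=\{m,y_1\}$. Then $T=\{m,y_1\}\in\TT_n$, $T_n=\{m,y_1,y_2,\dots\}$, and $\nu_n(T)=\bigl(1/\nu(m)+1/\nu(y_1)\bigr)^{-1}=1/3$ while $\nu(T_n)=\bigl(1/\nu(m)+1/\sum_i\nu(y_i)\bigr)^{-1}=1/2$. You should also know that the paper's own proof makes the same wrong-direction move: in its chain of inequalities it replaces the sum over $C_n(m_n(T))$ by the larger sum over $C(m(T_n))$ inside a reciprocal and writes ``$\le$'', which is backwards whenever $m(T)$ has children outside $V_n$. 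So your proposal reproduces the paper's argument, flaw included; what survives without the lemma is the weaker chain $b(\mu_n,\nu_n)\le c(\mu_n,\nu_n)\le c(\mu,\nu)\le 16\,b(\mu,\nu)$, obtained from Proposition \ref{pro22} and \eqref{H} on the finite trees $\cT_n$, while the exact monotonicity used for \eqref{limsupbb} would require a corrected argument.
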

\begin{proof}
The first bound is proven by iteration on the height of $T\in\TT_n$.

$\bullet$ If this height is zero, then $T$ is a singleton and $ T_n$ is the same singleton, so that $\nu_n(T)=\nu( T_n)$.
\par
$\bullet$ If the height $h(T)$ of $T$ is at least equal to 1, decompose
\bq
T&=&\{m_n(T)\}\sqcup \bigsqcup_{y\in C_n(m_n(T))}T_{n,y}\eq
where $m_n(\cdot)$, $C_n(\cdot)$ and $T_{n,\cdot}$ are the notions corresponding to $m(\cdot)$, $C(\cdot)$ and $T_{\cdot}$ in $\cT_n$.

Note that $T$ and $ T_n$ have the same height and decompose
\bq
 T_n&=&\{m( T_n)\}\sqcup \bigsqcup_{z\in C(m( T_n))} T_{n,z}.\eq
On the one hand, we have $m( T_n)=m_n(T)$ and $C_n(m_n(T))\subset C(m_n(T))$
and on the other hand, we have  for any $y\in C_n(m_n(T))$, 
\bq
\nu_n(T_y)&\geq & \nu( (T_y)_n)\\
&=&\nu( T_{n,y})\eq
due to the iteration assumption and to the fact that the common height of $T_y$ and $ (T_y)_n$ is at most equal to $h(T)-1$. The equality $ (T_y)_n=T_{n,y}$ is due to the fact that $T_{n,y}$ is obtained by the same completion of $T_y$ as the one presented for $T$ just above the statement of Lemma \ref{compwin}, and thus coincides with $ (T_y)_n$.\par
It follows that
\bq
\f1{\nu_n(T)}&=&\f1{\nu_n(m_n(T))}+\f1{\sum_{y\in C_n(m_n(T))} \nu_n(T_{y})}\\
&=&\f1{\nu(m( T_n))}+\f1{\sum_{y\in  C_n(m_n(T))} \nu_n(T_{y})}\\
&\leq & \f1{\nu(m( T_n))}+\f1{\sum_{y\in  C_n(m_n(T))} \nu(T_{n,y})}\\
&\leq & \f1{\nu(m( T_n))}+\f1{\sum_{y\in  C(m( T_n))} \nu(T_{n,y})}\\
&=&\f1{\nu( T_n)},
\eq
establishing the wanted bound 
\bq
\nu_n(T)&\geq & \nu( T_n).\eq
We now come to the second bound of the above lemma. By definition, we have
\bq
T^*&=& \sqcup_{x\in L_n(T)}S_{n,y},\eq
where $L_n(T)$ is the set of leaves of $T$ in $\cT_n$ and $S_{n,y}$ is the subtree rooted in $y$ in $\cT_n$.

Note that $L_n(T)\subset L(T_n)$ and by definition of $\mu_n$, we have
\bq
\fo y \in L_n(T),\qquad \mu_n(S_{n,y})&=&\mu(S_y).\eq
It follows that 
\bq
\mu_n(T^*)&=&\sum_{x\in L_n(T)}\mu_n(S_{n,y})\\
&=&\sum_{x\in L_n(T)}\mu(S_y)\\
&\leq & \sum_{x\in L(T_n)}\mu(S_y)\\
&=&\mu(T_n^*).\eq
\end{proof}
Let $\wi \SS_n$ be the image of $\SS_n$ under the mapping $\SS_n\ni T\mapsto T_n\in\SS$.
Since $\SS_n\ni T\mapsto T_n\in\wi\SS_n$ is a bijection, we get from Lemma \ref{compwin},
\bq
 b(\mu_n,\nu_n)&\df&\max_{T\in\SS_n} \f{\mu_n(T^*)}{\nu_n(T)}\\
 &\leq &\max_{T_n\in\SS_n} \f{\mu(T_n^*)}{\nu(T_n)}\\
 &\leq & b(\mu,\nu),\eq
so that
\bqn{limsupbb}
\limsup_{n\ri\iy}b(\mu_n,\nu_n)&\leq & b(\mu,\nu).\eqn
Let us show more precisely:
 \begin{prop}\label{pro23}
 We have
\bq
\lim_{n\ri\iy} b(\mu_n,\nu_n)&=&b(\mu,\nu).\eq
\end{prop}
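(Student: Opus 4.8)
The plan is to pair \eqref{limsupbb} with the reverse inequality. Since \eqref{limsupbb} already yields $\limsup_n b(\mu_n,\nu_n)\le b(\mu,\nu)$, it is enough to prove $\liminf_n b(\mu_n,\nu_n)\ge b(\mu,\nu)$. As $b(\mu,\nu)=\sup_{T\in\SS}\mu(T^*)/\nu(T)$ and every individual ratio is finite (because $\mu(T^*)\le\mu(V)<+\iy$ while $\nu(T)>0$), it suffices to fix an arbitrary $T\in\SS$ and show that $\liminf_n b(\mu_n,\nu_n)\ge\mu(T^*)/\nu(T)$; taking the supremum over $T$ then finishes the argument, and also covers the case $b(\mu,\nu)=+\iy$.

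First I would introduce the candidate $T_n\df T\cap V_n$, which lies in $\SS_n$ for every $n$ large enough that $m(T)\in V_n$. The verification is routine: $T_n$ does not contain $0$; it is $\cT_n$-connected because $V_n$ is a connected subtree containing $0$, so the $\cT$-path from any vertex of $T_n$ up to $m(T)$ stays inside $V_n$ and, being inside $T$, inside $T_n$; it is child-closed in $\cT_n$ because the $\cT_n$-children of a vertex are among its $\cT$-children and $T$ is child-closed in $\cT$; and $m(T_n)=m(T)$ is a child of $0$. Since moreover $T_n\uparrow T$, it remains to establish the two limits $\mu_n(T_n^*)\to\mu(T^*)$ and $\nu_n(T_n)\to\nu(T)$, after which $\mu_n(T_n^*)/\nu_n(T_n)\to\mu(T^*)/\nu(T)$ and we are done.

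For the first limit I would rely on the identity $\mu_n(S_{n,x})=\mu(S_x)$ for $x\in V_n$, where $S_{n,x}$ is the set of $\cT_n$-descendants of $x$; it follows from $\mu_n(v)=\mu(R_n(v))$ together with $\bigsqcup_{v\in S_{n,x}}R_n(v)=S_x$. Writing $\mu_n(T_n^*)=\sum_{y\in L_n(T_n)}\mu(S_y)$, where $L_n(T_n)$ is the (at most denumerable) set of $\cT_n$-leaves of $T_n$, one checks that for each fixed vertex $z$ one has $z\in\bigsqcup_{y\in L_n(T_n)}S_y$ if and only if $z\in T^*$, for all $n$ large enough: the discrepancy between these two sets is carried by the subtrees hanging below the ``truncation leaves'' of $T_n$, and each such leaf is resolved once $V_n$ has grown enough. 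Since $\mu$ is a finite measure, dominated convergence then gives $\mu_n(T_n^*)\to\mu(T^*)$.

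The main obstacle is the convergence $\nu_n(T_n)\to\nu(T)$, which I would prove by induction on the height of $T$ using the recursion \eqref{rec}. The singleton case is immediate, since then $T_n=\{m(T)\}$ for large $n$ and $\nu_n(\{m(T)\})=\nu(m(T))=\nu(T)$. For the inductive step, decompose $T=\{m(T)\}\sqcup\bigsqcup_{y\in C(m(T))}T_y$, so that $T_n=\{m(T)\}\sqcup\bigsqcup_{y\in C(m(T))\cap V_n}(T_y\cap V_n)$, and observe that by \eqref{rec} everything reduces to proving
\[
\sum_{y\in C(m(T))\cap V_n}\nu_n\!\left(T_y\cap V_n\right)\ \longrightarrow\ \sum_{y\in C(m(T))}\nu(T_y)\qquad\text{in }(0,+\iy].
\]
The inequality $\liminf\ge$ is easy: restrict the sum to the children lying in a fixed finite subset of $C(m(T))$, apply the induction hypothesis termwise (a finite sum), and then let the finite subset exhaust $C(m(T))$. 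The subtle half is $\limsup\le$: the index set $C(m(T))\cap V_n$ keeps growing, so one must control the total contribution of the children that have only just entered $V_n$ and whose subtrees are still heavily truncated. Here one invokes the a priori bound $\nu_n(T_y\cap V_n)\le\nu_n\!\left(m(T_y\cap V_n)\right)=\nu(y)$ coming from \eqref{rec}, combines it with the induction hypothesis, and concludes by dominated convergence for the series --- which is precisely the step where one uses that the tails of $\nu$ over the children of a given vertex are summable (automatic in all the situations of this paper, where $\nu$ is a finite measure). Everything else is bookkeeping.
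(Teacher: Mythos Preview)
Your overall architecture matches the paper's proof: reduce to a fixed $T\in\SS$, set $T^{(n)}=T\cap V_n$, and establish $\mu_n((T^{(n)})^*)\to\mu(T^*)$ and $\nu_n(T^{(n)})\to\nu(T)$ separately. Your treatment of the $\mu$-limit via $\mu_n(S_{n,x})=\mu(S_x)$ and dominated convergence is exactly the paper's argument.

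The difference lies in the $\nu$-limit. The paper observes (and proves by the same height induction you set up) that the convergence is \emph{monotone non-decreasing}: $\nu_n(\wi T\cap V_n)\uparrow\nu(\wi T)$. Indeed, once monotonicity is established at lower heights, both the index set $C_n(m(\wi T))$ and each term $\nu_n(\wi T_y\cap V_n)$ are non-decreasing in $n$, so the sum in \eqref{rec} increases and monotone convergence applies directly. This avoids any summability hypothesis on $\nu$.

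Your $\liminf/\limsup$ split works, but the $\limsup$ half relies on the dominating envelope $\nu(y)$ being summable over $C(m(T))$. That is \emph{not} assumed in Appendix~\ref{a2}, where only $\mu$ is required to have finite total mass; so as written your argument does not cover the full generality of the statement. You flag this honestly, and it is harmless for the applications in Sections~\ref{spectralgap} and~\ref{spectralgap2}. Still, the monotonicity observation is both simpler and strictly stronger: it removes the need for your $\limsup$ step entirely, since your $\liminf$ argument already gives the full limit once you know the sequence is non-decreasing.
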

 \begin{proof}
 According to \eqref{limsupbb}, it remains to show that
 \bqn{liminfbb}
\liminf_{n\ri\iy}b(\mu_n,\nu_n)&\geq & b(\mu,\nu).\eqn
 Consider $T\in\SS$ such that the ration $\mu(T^*)/\nu(T)$ serves to approximate $b(\mu,\nu)$, namely up to an arbitrary small $\epsilon>0$ if $b(\mu,\nu)<+\iy$  or is an arbitrary large quantity if 
 $b(\mu,\nu)=+\iy$.
Define
\bq
\fo n\in\NN_+,\qquad T^{(n)}&\df& T\cap V_n.\eq
Arguing as at the end of the proof of Proposition
\ref{pro22}, we will deduce
\eqref{liminfbb}
from
\bq
\lim_{n\ri\iy} \f{\mu_n((T^{(n)})^*)}{\nu_n(T^{(n)})}&= & \f{\mu(T^*)}{\nu(T)},\eq
where $(T^{(n)})^*$ is understood in $\cT_n$. This convergence will itself be the consequence of
\bqn{check1}
\lim_{n\ri\iy} \mu_n((T^{(n)})^*)&= & \mu(T^*),\\
\label{check2}\lim_{n\ri\iy} {\nu_n(T^{(n)})}&= & \nu(T).\eqn
For \eqref{check1}, note that
\bq
\mu(T^*)&=&\sum_{x\in L(T)}\mu(S_y),\eq
and as we have seen at the end of the proof of Lemma \ref{compwin},
\bq
\mu(T^*)&=&\sum_{x\in L_n(T^{(n)})}\mu(S_y).\eq
Thus \eqref{check1} follows by dominated convergence (since $\mu(V)<+\iy$), from
\bq
\fo x\in T,\qquad \lim_{n\ri\iy} \un_{L_n(T^{(n)})}(x)&=&\un_{L(T)}(x).\eq
To show the latter convergences, consider two cases:

$\bullet$ If $x\in L(T)$, then we will have $x\in L_n(T^{(n)})$ as soon as $x\in V_n$.

$\bullet$ If $x\in T\setminus L(T)$, then we will have $x\not\in L_n(T^{(n)})$ as soon as 
$V_n$ contains one of the children of $x$ in $T$.

We now come to \eqref{check2}, and more generally let us prove by iteration over their height, that for any $\wi T\in\TT$ and $\wi T\subset T$, we have
\bqn{nunu}
\lim_{n\ri\iy}\uparrow {\nu_n(\wi T\cap V_n)}&= & \nu(\wi T),
\eqn
i.e.,  the limit is non-decreasing.

Indeed, if $\wi T$ has height 0, it is a singleton $\{x\}$, 
we have $\nu_n(\wi T\cap V_n)=\nu(\wi T)$ as soon as $x$ belongs to $V_n$, insuring \eqref{nunu}.

Assume that $\wi T$ has height a $h\geq 1$ and that \eqref{nunu} holds for any $\wi T$ whose height is at most equal to $h-1$.
Write as usual
\bqn{wiT1}
\f1{\nu(\wi T)}&=&\f1{\nu(m(\wi T))}+\f1{\sum_{y\in C(m(\wi T))} \nu(\wi T_{y})}.\eqn
Assume that $n$ is large enough so that $C(m(\wi T))\cap V_n\neq \emptyset$ and in particular $m(\wi T)\in V_n$ and $m_n(\wi T\cap V_n)=m(\wi T)$.
Thus we also have
\bqn{wiT2}
\nonumber\f1{\nu_n(\wi T\cap V_n)}&=&\f1{\nu_n(m_n(\wi T\cap V_n))}+\f1{\sum_{y\in C_n(m_n(\wi T\cap V_n))} \nu_n((\wi T\cap V_n)_{y})}\\
&=&\f1{\nu(m(\wi T))}+\f1{\sum_{y\in C_n(m(\wi T))} \nu_n(\wi T_y\cap V_n)}.\eqn
On the one hand, the set $C_n(m(\wi T))$ is non-decreasing and its limit is $C(m(\wi T))$, and on the other hand, due to the induction hypothesis, we have for any $y\in C(m(\wi T))$,
\bq
\lim_{n\ri\iy}\uparrow \nu_n(\wi T_y\cap V_n)&= & \nu(\wi T_y).
\eq
By monotone convergence, we get
\bq
\lim_{n\ri\iy}\uparrow \sum_{y\in C_n(m(\wi T))} \nu_n(\wi T_y\cap V_n)&=&\sum_{y\in C(m(\wi T))} \nu(\wi T_{y}),\eq
which leads to \eqref{nunu}, via \eqref{wiT1} and \eqref{wiT2}. This ends the proof of \eqref{liminfbb}.
\end{proof}
The conjunction of Propositions \ref{pro22} and \ref{pro23} leads to the validity of \eqref{H}, when $V$ is denumerable with $\cT$ of finite height.

Let us now remove the assumption of finite height. The arguments are very similar to the previous one, except that the definition of $b(\mu,\nu)$ has to be modified ($\mu$ and $\nu$ are still positive measures on $V$, with $\mu$ of finite total mass).\par
More precisely, for any $M\in\NN_+$, consider 
\bq
V_M&\df& \{x\in V\st h(x)\leq M\}.\eq
Define on $V_M$ the measure $\nu_M$ as the restriction to $V_M$ of $\nu$ and $\mu_M$ via
\bq
\fo x\in V_M,\qquad\mu_M(x)&\df& \lt\{\begin{array}{ll}
\mu(x)&\hbox{if $h(x)<M$,}\\
\mu(S_x)&\hbox{if $h(x)=M$.}\end{array}\rt.
\eq
 By definition, we take
 \bq
 b(\mu,\nu)&\df&\lim_{M\ri\iy} b(\mu_M,\nu_M).\eq
 This limit exists and the convergence is monotone, since he have
  \bq
\fo M\in\NN_+,\qquad  b(\mu_M,\nu_M)&=&\max_{T\in\SS_M} \f{\mu(T^*)}{\nu(T)}.\eq
where
  \bq
 \SS_M&\df& \{T\in\SS\st T\subset V_M\}.\eq
Note that a direct definition of $b(\mu,\nu)$ via the iteration \eqref{rec} is not possible: we could not start from leaves that are singletons.

 By definition, $c(\mu,\nu)$ is the best constant in \eqref{c}. It also satisfies
  \bq
c(\mu,\nu)&\df&\lim_{M\ri\iy} c(\mu_M,\nu_M),\eq
 as can be seen by adapting the proof of Proposition \ref{pro22}. We conclude 
 that \eqref{H} holds by passing at the limit in 
 \bq
 \fo M\in\NN_+,\qquad
 b(\mu_M,\nu_M)\ \leq \ {c}(\mu_M,\nu_M)\ \leq\ 16\,b(\mu_M,\nu_M).\eq


 \bibliographystyle{plain}

\end{document}